\documentclass[11pt,oneside,reqno,british]{amsart}
\usepackage[T1]{fontenc}
\usepackage[latin9]{inputenc}
\usepackage{color}
\usepackage{babel}
\usepackage{mathtools}
\usepackage{amstext}
\usepackage{amsthm}
\usepackage{amssymb}
\usepackage[unicode=true,pdfusetitle,
 bookmarks=true,bookmarksnumbered=false,bookmarksopen=false,
 breaklinks=true,pdfborder={0 0 0},pdfborderstyle={},backref=false,colorlinks=true]
 {hyperref}
\usepackage{tikz}
\usetikzlibrary{circuits.ee.IEC}

\makeatletter
\theoremstyle{definition}
\newtheorem*{defn*}{Definition}
\newtheorem*{conj*}{Conjecture}
\newtheorem{conj}{Conjecture}
\newtheorem{quest}[conj]{Question}

\theoremstyle{plain}
\newtheorem{thm}{Theorem}
\newtheorem*{thm*}{Theorem}
\newtheorem{lem}{Lemma}

\newtheorem*{claim*}{Claim}

\theoremstyle{remark}
\newtheorem*{rem*}{Remark}
\newtheorem*{rmrks*}{Remarks}

\usepackage{calrsfs}
\usepackage{graphicx}
\usepackage{color}
\usepackage{perpage}
\usepackage{upquote}
\usepackage{bbm}
\usepackage[shortlabels]{enumitem}
\usepackage{stmaryrd} 
\usepackage{extarrows} 
\usepackage{romanbar}
\usepackage{color}




\allowdisplaybreaks
\raggedbottom
\MakePerPage{footnote}
\gdef\SetFigFontNFSS#1#2#3#4#5{} 
\gdef\SetFigFont#1#2#3#4#5{} 
\def\clap#1{\hbox to 0pt{\hss#1\hss}}

\DeclareMathOperator{\sign}{sign}

\definecolor{myblue}{rgb}{0.09,0.32,0.44} 
\hypersetup{pdfborder={0 0 0},pdfborderstyle={},colorlinks=true,linkcolor=myblue,citecolor=myblue,urlcolor=blue}

\newlength{\tempindent}
\newcommand{\lazyenum}{
\setlength{\tempindent}{\parindent}
\begin{enumerate}[leftmargin=0cm,itemindent=0.7cm,labelwidth=\itemindent,labelsep=0cm,align=left,label=\arabic*)]
\setlength{\parskip}{\smallskipamount}
\setlength{\parindent}{\tempindent}
}
\newcommand{\enumrom}{
  \begin{enumerate}[label=(\roman*)]
}
\makeatletter
\renewcommand{\andify}{%
  \nxandlist{\unskip, }{\unskip{} \@@and~}{\unskip{} \@@and~}}
\def\author@andify{%
  \nxandlist {\unskip ,\penalty-1 \space\ignorespaces}%
    {\unskip {} \@@and~}%
    {\unskip \penalty-2 \space \@@and~}%
}
\let\@wraptoccontribs\wraptoccontribs
\makeatother


\makeatletter
\def\moverlay{\mathpalette\mov@rlay}
\def\mov@rlay#1#2{\leavevmode\vtop{%
   \baselineskip\z@skip \lineskiplimit-\maxdimen
   \ialign{\hfil$\m@th#1##$\hfil\cr#2\crcr}}}
\newcommand{\charfusion}[3][\mathord]{
    #1{\ifx#1\mathop\vphantom{#2}\fi
        \mathpalette\mov@rlay{#2\cr#3}
      }
    \ifx#1\mathop\expandafter\displaylimits\fi}
\makeatother

\ifpdf
\def\afs#1#2{\href{#1}{\nolinkurl{#2}}}
\else
\def\afs#1#2{\href{#1}{\nolinkurl{#2}}}
\fi

\def\expand#1#2{#1\langle #2\rangle}
\def\expandp#1#2{\expand{#1}{#2}}

\def\affs#1#2{\href{#1}{\nolinkurl{#2}}}


\def\lr{\leftrightarrow}
\def\nlr{\nleftrightarrow}
\def\Bl{\mathcal{B}}
\def\Cl{\mathcal{C}}

\def\El{\mathcal{E}}
\def\Gl{\mathcal{G}}
\def\Hl{\mathcal{H}}
\def\Pl{\mathcal{P}}
\def\Ul{\mathcal{U}}
\def\Wl{\mathcal{W}}
\def\Xl{\mathcal{X}}

\def\PP{\mathbb{P}}
\def\EE{\mathbb{E}}
\def\NN{\mathbb{N}}
\def\RR{\mathbb{R}}
\def\ZZ{\mathbb{Z}}
\def\eps{\varepsilon}

\def\piv{\partial_{\mathrm{iv}}}
\def\pev{\partial_{\mathrm{ev}}}

\newcommand\fullsizeground{\begin{tikzpicture}[baseline]
  \draw (0,0.27) -- (0,0.10);%
  \draw (-0.12,0.10) -- (0.12,0.10);%
  \draw (-0.08,0.05) -- (0.08,0.05);%
  \draw (-0.04,0) -- (0.04,0);%
  \end{tikzpicture}%
}
\newcommand\Ground{\mathbin{%
  \mathchoice{\fullsizeground}{\fullsizeground}%
    {\scalebox{0.65}{\fullsizeground}}%
    {\scalebox{0.4}{\fullsizeground}}}}


\makeatother

\title[A precolation inequality]{A reduction of the $\theta(p_c) = 0$ problem to a conjectured inequality}

\author{Gady Kozma$^1$}
\address{GK: The Weizmann Institute of Science, Rehovot, Israel}
\email{gady.kozma@weizmann.ac.il}
\thanks{\textcolor{white}{l}$^1$ The first author is partially  supported by the Israel Science Foundation and the Jesselson Foundation.}

\author{Shahaf Nitzan$^2$}
\address{SN: Georgia Institute of Technology, Atlanta, USA}
\email{shahaf.nitzan@math.gatech.edu}
\thanks{\textcolor{white}{l}$^2$ The second author is supported by NSF CAREER grant DMS 1847796.}

\begin{document}

\begin{abstract}
We conjecture a new correlation-like inequality for percolation probabilities and support our conjecture with numerical evidence and a few special cases which we prove. This inequality, if true, implies that there is no percolation at criticality at $\ZZ^d$, for any $d>1$.
\end{abstract}

\maketitle

\section{Introduction}

Consider the graph $\ZZ^d$ (precise definitions are given below in \S\ref{sec:prelim}). Let $p\in[0,1]$ be some probability and let $\omega$ be the random subgraph of $\ZZ^d$ one gets by removing each edge with probability $1-p$ independently. Percolation theory starts with the question: For which $p$ does the random graph $\omega$ have an infinite connected component? (an infinite cluster, in the common terminology). See the book \cite{Gri} for a comprehensive introduction.
It is well-known and easy to prove that there exists a number, denoted by $p_c$, such that for $p<p_c$ the graph $\omega$ almost surely has only finite components, while for $p>p_c$ there exists an infinite component, almost surely. A captivating question, open since at least the 80s, is: Does an infinite cluster exist at the exact value of $p_c$, for $d\ge 2$? It is common to denote
\[
\theta(p)\coloneqq\PP(\textrm{the cluster of 0 is infinite}).
\]
It is well-known that this question is equivalent to asking whether $\theta(p_c)=0$. A second equivalent formulation is to ask whether $\theta$ is continuous \cite[\S 8.3]{Gri}. 

The question was resolved for $d=2$ and for $d$ sufficiently large. The result for $d=2$ is due to Harris \cite{H60} who showed that $\theta(\frac12)=0$ and Kesten \cite{K80} who showed that $p_c\le \frac12$. As for large $d$, Hara \cite{H90} showed that $\theta(p_c)=0$ for $d\ge 19$. This was later improved to $d\ge 11$ \cite{FvdH17}. The technique involved is known as `lace expansion' and is inherently limited to $d>6$. See also \cite{BLPS, H, HH} for results on Cayley graphs.

As the problem is quite popular, a number of possible approaches have been suggested to attack it. Many of them have never been published, and are folklore among researchers working on the problem. Therefore it makes sense to list some of these approaches here. 

\lazyenum
\item \label{enu:BGN} 
  The most popular scheme for attacking the problem is hidden within the one-step renormalisation scheme of Barsky-Grimmett-Newman \cite{BGN} and Grimmett-Marstrand \cite{GM}. 
  In both cases, one starts by assuming that $\theta(p)>0$, gets some finite $n$ and some event $E$ defined on the box $[-n,n]^d$ such that $\PP(E)>0.9$. If such an $E$ could be found with the property that $\PP(E)>0.9$ implies $\theta(p)>0$, the problem would be solved, as this would make the set $\{p:\theta(p)>0\}$ open. Such an $E$ is sometimes called a \emph{finite size criterion}.

  In both cases, to justify the second step, namely to show that $\PP(E)$ large implies that $\theta(p)>0$, there is a certain inequality that needs to be justified. In \cite{BGN} it is justified because they work in a half space, and hence the result that they achieve is that the probability of an infinite cluster at $p_c$ \emph{in a half-space} is zero. In \cite{GM} it is justified by moving to a larger $p$ and a \emph{sprinkling argument}, and hence their result is about $p>p_c$ and not about $p_c$ itself (see a short discussion in \cite{Gri}, before lemma 7.17). It is well-known that if this inequality can be justified without either assumption then the conjecture that $\theta(p_c)=0$ would follow. This approach will be described in greater detail below.
\item Newman showed that if $\theta(p_c)>0$ then
  \[
  \int_0^{p_c}\EE_p(|\Cl(0)|)^{1/2}\,dp=\infty.
  \]
  See \cite[Theorem 1]{N86}. This gives a possible attack vector for the problem, by showing that this integral is in fact finite. 
\item Duminil-Copin noticed that $\theta(p_c)=0$ is equivalent to showing the claim that
  \[
  \lim_{x\to\infty}\PP_p(0\lr x,0\nlr\infty)= 0,
  \]
  uniformly in $p$. (It is straightforward to show that the limit is 0 for every $p$, so uniformity here is the whole point). 
\item Assume $\theta(p)>0$. A simple ergodicity argument shows that there are infinitely many points in the intersection of the infinite cluster with a horizontal line (say, in the first coordinate and containing $0$). Let $x$ be the point of this intersection closest to zero. In a workshop dedicated to the $\theta(p_c)$ problem held in les Diablerets in 2012, it was noticed that if $\EE(|x|)<\infty$ then $\theta(p_c)=0$. The proof (which, we believe, was never published) is a simple corollary from the Barsky-Grimmett-Newman result \cite{BGN}.
  \item If the two arms exponent (in the sense of \cite{Cerf}, i.e., two disjoint clusters) is too large, then this implies that $\theta(p_c)=0$. See \cite[\S 4.3]{Engel} for some results and historical discussion.
\end{enumerate}
In this paper we return to the one-step renormalisation approach (approach \ref{enu:BGN} above) 
and study it. When we sketched it above, we did not define the `inequality that needs to be justified' because, to the best of our knowledge, it was never explicitly stated in the literature. Mostly, it was considered as an inequality that should be justified \emph{in a specific context}, namely justify that certain connection probabilities of specific sets in $\ZZ^d$ satisfy certain relations.

Our goal in this paper is to give a graph point of view for the same approach. Namely, we will suggest (several possible) inequalities which we believe hold for \emph{all} graphs, and not just in the Euclidean setting of the one-step renormalisation scheme. We will then show that if either of these inequalities hold then they can be incorporated into the one-step renormalisation approach, to give  $\theta(p_c)=0$. 
The inequality which we find cleanest is formulated in the following conjecture.\footnote{After distributing initial drafts of this paper we were informed that J.~van den Berg and D.~Engelenburg independently considered the very same inequality. Their motivation was quite different, it came from attempts to improve the two arms exponent via the approach of \cite{Cerf}. They did not prove or disprove the conjecture.}

\begin{conj}\label{conj:postFKG}Let $G$ be a finite graph with arbitrary probabilities on its edges, let $0, b\in G$ be two vertices and let $A\subset G$ be a set of vertices. Then
  \begin{equation}\label{eq:postFKG}
    \PP(0\lr b)\ge \PP(0\lr A)\min\{\PP(a\lr b):a\in A\}.
  \end{equation}
\end{conj}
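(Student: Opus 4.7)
The plan is to explore the cluster of $0$ edge by edge, stop the first time a vertex of $A$ is discovered, and then apply the strong Markov property. Fix a deterministic exploration rule (say BFS with a fixed tiebreaker), let $\tau$ be the first step at which a vertex of $A$ is added to the explored cluster, and set $\tau=\infty$ if $\Cl(0)\cap A=\emptyset$; on $\{\tau<\infty\}$, write $a^\star\in A$ for the vertex discovered at time $\tau$. Since $0\lr a^\star$ automatically on $\{\tau<\infty\}$, we have $\{\tau<\infty,\,a^\star\lr b\}\subseteq\{0\lr b\}$ and hence
\[
\PP(0\lr b)\ \ge\ \EE\!\left[\mathbbm{1}_{\{\tau<\infty\}}\,\PP(a^\star\lr b\mid\mathcal F_\tau)\right].
\]
Because $\PP(\tau<\infty)=\PP(0\lr A)$, the entire inequality \eqref{eq:postFKG} would follow from the pointwise lower bound
\[
\PP(a^\star\lr b\mid\mathcal F_\tau)\ \ge\ \min_{a\in A}\PP(a\lr b)\qquad\text{on }\{\tau<\infty\}.
\]

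By the strong Markov property, conditional on $\mathcal F_\tau$ the unexplored edges are still independent with their original parameters, so $\PP(a^\star\lr b\mid\mathcal F_\tau)$ is a percolation probability on a modified graph in which the revealed open edges are pinned open, the revealed closed edges are deleted, and the rest retain their original $p_e$. The $|A|=1$ case of the conjecture is immediate from FKG applied to the increasing events $\{0\lr a\}$ and $\{a\lr b\}$, and the natural hope is to push this FKG intuition through the exploration: the revealed open edges are increasing information and only help $\{a^\star\lr b\}$, while the revealed closed edges lie in a neighborhood of the partial cluster and should intuitively be ``far'' from any $a^\star$-to-$b$ path one really needs.

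The main obstacle is exactly that the revealed closed edges carry genuine \emph{decreasing} information, so FKG does not apply directly; moreover, the obvious coupling that simply re-randomises the forced-closed edges with their original $p_e$ does not help, because it increases both $\PP(a^\star\lr b\mid\mathcal F_\tau)$ and $\PP(a\lr b)$ at once. One strategy I would try is to fix an order on $A$ by $a\mapsto\PP(a\lr b)$ and decompose $\{0\lr A\}$ into the disjoint first-hit events $E_a=\{0\lr a,\,0\nlr a'\text{ for all earlier }a'\}$, so that on $E_a$ the residual minimum over $A\setminus\{\text{excluded}\}$ equals $\PP(a\lr b)$, and then attempt the lower bound $\PP(a\lr b\mid E_a)\ge\min_{a'\in A}\PP(a'\lr b)$ by combining FKG on the increasing piece $\{0\lr a\}$ with a careful argument that the decreasing constraints $\{0\nlr a'\}$ cannot drive the conditional probability below the minimum. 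An alternative is to interleave the exploration of $\Cl(0)$ with an exploration from $b$ arranged so that every edge revealed as closed is certifiably irrelevant to $\{a^\star\lr b\}$; a cleanly tractable subcase in this spirit is when $A$ separates $0$ from $b$, because then the cluster-of-$0$ exploration cannot touch any edge on the $b$-side of the separator. In every approach, controlling the effect of the decreasing information revealed by the exploration is the genuinely hard step, and is presumably the reason the authors leave the inequality only as a conjecture.
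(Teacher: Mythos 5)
You should first note that the statement you were asked to prove is Conjecture~\ref{conj:postFKG}: the paper does not prove it, states explicitly that the authors could neither prove nor disprove it, and only establishes special cases ($|A|=2$ in theorem~\ref{thm:A=2}, some $|A|=3$ situations such as theorem~\ref{thm:A=3}, and configurations where $0$ is essentially glued to $A$ in \S\ref{sec:A close}). So no complete argument of the kind you outline exists in the paper, and your own text concedes that the key step is missing. The concrete gap is the pointwise bound $\PP(a^\star\lr b\mid\mathcal F_\tau)\ge\min_{a\in A}\PP(a\lr b)$ on $\{\tau<\infty\}$: this is simply false, already for $|A|=1$. Take vertices $0,y,a,b$ with edges $\{0,y\},\{y,a\},\{y,b\},\{0,a\},\{a,b\}$, all with probability in $(0,1)$, and a BFS order that examines $\{0,y\}$ first. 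On the history where $\{0,y\}$ is open, $\{y,a\}$ and $\{y,b\}$ are closed, and $\{0,a\}$ is open (so $\tau<\infty$, $a^\star=a$), the conditional probability of $a\lr b$ is just the probability that the single edge $\{a,b\}$ is open, which is strictly smaller than the unconditional $\PP(a\lr b)$ (the latter also counts the route through $y$), hence strictly smaller than the minimum over $A=\{a\}$. The revealed closed edges are genuine decreasing information and can sit exactly on the alternative $a$-to-$b$ routes, so no pointwise strong-Markov bound of this form can hold; only an averaged statement could be true.

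That averaged statement is precisely the content of the conjecture. Your first-hit decomposition, summed over the exploration histories, is essentially the pre-FKG form \eqref{eq:preFKG}/\eqref{eq:preFKGA}, i.e.\ $\PP(0\lr b,0\lr A)\ge\min_{a\in A}\PP(0\lr A,a\lr b)$, which the paper states as Conjecture~\ref{conj:preFKG} and also leaves open (it implies \eqref{eq:postFKG} via FKG). Likewise, the subcase you describe as cleanly tractable --- $A$ separating $0$ from $b$, so that the exploration of $\Cl(0)$ never touches the $b$-side --- is not known in general: the separation hypothesis does not neutralise the closed edges revealed among the components on the $0$-side that connect different points of $A$ to each other, and the paper only handles it for $|A|\le 3$ (theorem~\ref{thm:A=3}), via BHK-type conditional correlation inequalities rather than exploration. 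So your proposal correctly identifies the difficulty (the decreasing information revealed by the exploration) but does not overcome it; as it stands it is a plan whose crucial step is equivalent to, or stronger than, the open conjecture itself.
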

We were not able to prove or disprove this conjecture. Our belief that it holds is based on some (admittedly restricted) numerical evidence and on some simple cases where we were able to prove it. In \S\ref{sec:simple cases} we show the cases of conjecture \ref{conj:postFKG} which we were able to prove, namely, the case $|A|=2$, some particular cases in which $|A|=3$, and graphs where $0$ is `very close' to $A$. In \S\ref{sec:theta} we show that conjecture \ref{conj:postFKG} implies that $\theta(p_c)=0$. Finally \S\ref{sec:misc} contains various musings, remarks, possible and impossible generalisations and the like.

Before we start with all this, let us note two additional version of conjecture \ref{conj:postFKG} which we find particularly important. One reason to believe that conjecture \ref{conj:postFKG} holds is that it has an `FKG-like' feeling. Nevertheless, in fact we believe the following stronger inequality, which does not have this property, might also be true.
\begin{conj}\label{conj:preFKG}Let $G$, $0$, $b$ and $A$ be as in conjecture \ref{conj:postFKG}. Then
  \begin{equation}\label{eq:preFKG}
    \PP(0\lr b)\ge \min\{\PP(0\lr A,a\lr b):a\in A\}.
  \end{equation}
\end{conj}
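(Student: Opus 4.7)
My plan is to attempt induction on $|A|$. The base case $|A|=1$ is immediate from $\{0\lr a\}\cap\{a\lr b\}\subseteq\{0\lr b\}$. For the inductive step, I would isolate a distinguished element $a^{*}\in A$ (naturally, the one attaining the minimum on the right-hand side of \eqref{eq:preFKG}) and reformulate the inequality in a form that exposes the case $0\nlr a^{*}$. Using that on the event $\{0\lr a^{*}\}$ the events $\{a^{*}\lr b\}$ and $\{0\lr b\}$ coincide, the desired bound is equivalent to
\begin{equation*}
\PP(0\lr b,\ 0\nlr a^{*})\ \ge\ \PP(0\lr A\setminus\{a^{*}\},\ a^{*}\lr b,\ 0\nlr a^{*}).
\end{equation*}

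Next, I would condition on the cluster $C$ of $0$, which by construction avoids $a^{*}$. The left-hand side becomes $\sum_{C}\PP(\mathrm{cl}(0)=C)$, summed over $C$ with $a^{*}\notin C$ and $b\in C$, whereas the right-hand side becomes $\sum_{C}\PP(\mathrm{cl}(0)=C)\cdot\PP_{G\setminus C}(a^{*}\lr b)$, summed over $C$ with $a^{*},b\notin C$ and $C\cap(A\setminus\{a^{*}\})\ne\emptyset$; here $\PP_{G\setminus C}$ denotes percolation on the subgraph induced on $V\setminus C$ with its original edge probabilities. I would then attempt in parallel either (a) a direct termwise swap, matching a configuration contributing to the right (with disjoint clusters $C$ and $D\ni a^{*},b$) to a configuration contributing to the left by merging $C$ and $D$ across a pivotal exterior edge, while closing an offsetting edge of $D$ to preserve weight; or (b) a reapplication of the inductive hypothesis inside $G\setminus C$, with $a^{*}$ playing the role of $0$ and a well-chosen subset of $\partial C$ playing the role of $A$, followed by resummation over the law of $C$.

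The principal obstacle I foresee is that Conjecture \ref{conj:preFKG} cannot admit a purely FKG-style proof: both $\{0\lr A\}$ and $\{a\lr b\}$ are increasing, so FKG alone yields only the weaker Conjecture \ref{conj:postFKG}. Any successful argument must therefore exploit genuine combinatorial information about the partition of $V$ into clusters, in particular about how $C$ separates $0$ from $a^{*}$. The swap strategy (a) requires an injective merge that accounts for the enlarged cluster $C\cup D$ with controlled loss of probability, and the naive implementations break down beyond the special $|A|=3$ cases in \S\ref{sec:simple cases}, because one must simultaneously track several vertices of $A$ in $C$ and the location of $b$ inside $D$. Whether a more refined transformation -- perhaps one rooted in the random-cluster representation, or in an edge-disjoint occurrence argument of van den Berg--Kesten type -- can handle the general situation is, in my view, exactly the crux of the problem, and precisely the place where every attempt I can imagine stalls.
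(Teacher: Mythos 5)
There is no proof here to check against the paper, because the paper does not contain one either: the statement you are addressing is Conjecture~\ref{conj:preFKG}, which the authors explicitly leave open (``we were not able to prove or disprove this conjecture''); only special cases are established in \S\ref{sec:simple cases} --- $|A|=2$ (Theorem~\ref{thm:A=2}), a separating $|A|=3$ (Theorem~\ref{thm:A=3}), and ``$A$ close to $0$'' (Theorems~\ref{thm:diamond} and~\ref{thm:antenna}), all via the van den Berg--H\"aggstr\"om--Kahn conditional correlation inequality. Your preliminary reductions are correct: the base case $|A|=1$, the equivalence of the target inequality (for the minimiser $a^{*}$) with
\[
\PP(0\lr b,\ 0\nlr a^{*})\ \ge\ \PP(0\lr A\setminus\{a^{*}\},\ a^{*}\lr b,\ 0\nlr a^{*}),
\]
and the decomposition over $\Cl(0)=C$ are all sound, and the latter is essentially the same starting point the paper uses in the ``good quadruple'' definition of \S\ref{sec:A close}. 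But the proposal stops exactly where the difficulty begins, as you yourself acknowledge, so nothing is proved.

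To be concrete about why the two routes you sketch fail as stated. For (a), the measure has arbitrary edge probabilities, so opening a merging edge and ``closing an offsetting edge of $D$ to preserve weight'' changes the weight by uncontrolled factors $p_e/(1-p_e)$, and the merge map is badly non-injective: many pairs $(C,D)$ produce the same merged cluster, and after merging one can no longer read off which vertices of $A\setminus\{a^{*}\}$ lay in $C$ and where $b$ sat in $D$. For (b), the induction is on $|A|$, but the surrogate set you would feed to the inductive hypothesis inside $G\setminus C$ (a subset of $\partial C$) is in general much larger than $|A|-1$, so the induction does not close; moreover the clusters $C$ appearing on the two sides of your reformulation belong to different classes ($b\in C$ on the left, $b\notin C$ on the right), so an estimate proved inside $G\setminus C$ for a right-hand term has no left-hand partner to compare with. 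If you want to make progress on partial cases, the paper's mechanism is worth absorbing: it conditions on non-connection events and uses BHK (Lemmas~\ref{pitzutzimbainaim}--\ref{lemma; compare}), and in the $|A|=2$ case produces explicit convex coefficients $\phi(1)/(\phi(1)+\phi(2))$, $\phi(2)/(\phi(1)+\phi(2))$ built from probabilities conditioned on $a_1\nlr a_2$ --- a hint that the relevant cluster information is of a different nature than a configuration-by-configuration swap.
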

Conjecture \ref{conj:postFKG} follows from conjecture \ref{conj:preFKG} by an application of the FKG inequality. For this reason we sometimes call conjecture \ref{conj:postFKG} `the post-FKG conjecture' and conjecture \ref{conj:preFKG} `the pre-FKG conjecture'.

We will also occasionally prove the following formally stronger version of conjecture \ref{conj:preFKG}:
\begin{equation}\label{eq:preFKGA}
  \PP(0\lr b,0\lr A)\ge \min\{\PP(0\lr A,a\lr b):a\in A\}.
\end{equation}
We say that \eqref{eq:preFKGA} is only `formally' stronger than \eqref{eq:preFKG} because to prove \eqref{eq:preFKG} it is enough to prove it in the case $b\in A$ (the case that $b\not\in A$ follows by considering \eqref{eq:preFKG} for $A'=A\cup\{b\}$), and in this case \eqref{eq:preFKGA} and \eqref{eq:preFKG} are equivalent.

\section{Preliminaries}\label{sec:prelim}

Below we define some notation, and discuss a correlation inequality due to van den Berg, H\"aggstr\"om and Kahn.

\subsection{Notation}

Throughout, we will not distinguish between a graph and its set of vertices. Thus if $G$ is a graph, the notation $v\in G$ stands for `$v$ is a vertex of $G$' and $A\subseteq G$ stands for `$A$ is a subset of vertices'. The set of edges of $G$ will be denoted by $E(G)$. The notation $v\sim w$ will mean that $v$ and $w$ are neighbours in the graph (or $\{v,w\}\in E(G)$).

For a graph $G$ and a function $p:E(G)\to[0,1]$ we will consider the measure on subsets $\omega$ of $E(G)$ such that for every edge $e$ the probability that $e\in\omega$ is $p(e)$, and these events are independent. We call such measures \emph{percolation measures}. Edges which satisfy $e\in\omega$ are called \emph{open}. Otherwise the edges are \emph{closed}. For two vertices $v$ and $w$ we denote by $v\lr_\omega w$ the event that $v$ and $w$ are connected in $\omega$, namely, that there exist $v=x_1\sim x_2\sim\dotsb\sim x_k=w$ such that $\{x_i,x_{i+1}\}\in \omega$ for all $i\in\{1,\dotsc,k-1\}$. For a vertex $v$ we denote by $\Cl_\omega(v)$ the \emph{cluster} of $v$ in $\omega$, namely $\{w:v\lr_\omega w\}$. For a set of vertices $A$ we denote $\Cl(A)=\bigcup_{a\in A}\Cl(a)$. We denote by $v\lr_\omega\infty$ the event $|\Cl_\omega(v)|=\infty$. When $\omega$ is random and given by a percolation measure we will usually omit it from the notation. Thus $\PP(v\lr w)$ is the probability that $v\lr_\omega w$ for $\omega$ given by some percolation measure, which is assumed to be clear from the context (sometimes we will denote it by $\PP_G$ for clarity, where $G$ is the graph). For a number $p\in[0,1]$ the notation $\PP_p$ will be used for the percolation measure with the constant function $p$.

When $v,w\in A\subset G$ we denote by $v\stackrel{A}{\lr}w$ the event that there exist $x_i$ as above with $x_i\in A$ for all $i$. The set $\{w:v\stackrel{A}\lr w\}$ will be called \emph{the cluster of $v$ in $A$}. For $A\subset B$ we denote by $\mathcal{C}(A;B)=\{x\in B:\exists y\in A\text{ s.t. }x\xleftrightarrow{B}y\}$ the cluster of $A$ in $B$. 

We will occasionally need site percolation as well. Site percolation on a graph $G$ is a random subset of \emph{vertices} rather than edges. Two vertices $v$, $w$ are considered connected in $\omega$ (which is now a random set of vertices) if there exist $v=x_1\sim\dotsb\sim x_k=w$ such that $x_i\in \omega$ for all $i$. The definition of $\Cl(v)$ is analogous.

We denote by $\ZZ^d$ the graph which has as its vertex set $\ZZ^d$ and as its edge set $\{\{v,w\}:||v-w||_1=1\}$. As above, we use the notation
\[
\theta(p)\coloneqq\PP_p(|\Cl(0)|=\infty).
\]
Since $\theta(p)$ is increasing there exists a value $p_c=p_c(\ZZ^d)\in [0,1]$, called the critical probability, such that for $p<p_c$ one has $\theta(p)=0$ while for $p>p_c$, $\theta(p)>0$. For site percolation on $\ZZ^d$ we denote by $p_c(\ZZ^d,\textrm{site})$ the corresponding critical value. See \cite[Theorem 1.10]{Gri} for the fact that $p_c\ne 0,1$.

For a graph $G$, an increasing function is a function $f:\{0,1\}^{E(G)}\to\RR$ which is increasing in each variable (we will not distinguish between $\{0,1\}^{E(G)}$ and the set of subsets of $E(G)$, considering each element $\omega\in\{0,1\}^{E(G)}$ as the subset $\{e:\omega(e)=1\}$). The Fortuin-Kasteleyn-Ginibre inequality (henceforth abbreviated as FKG, also known as `the Harris inequality') states that for any two increasing functions $f$ and $g$ one has $\EE(fg)\ge \EE(f)\EE(g)$ (under suitable integrability conditions, e.g.\ that both are in $L^2$). See \cite[\S2.2]{Gri}.

Let $G$ be a graph, let $U$ be some event (i.e.\ $U \subseteq\{0,1\}^{E(G)}$) and let $e$ be an edge. The event that \emph{$e$ is pivotal for $U$} is the set of $\omega\subseteq E(G)$ such that $(\omega \cup \{e\})\in U$ but $(\omega\setminus\{e\})\not\in U$, or vice versa. In other words, $e$ is pivotal if changing the status of $e$ changes whether the event $U$ occurred or not. Notice that $e$ being pivotal is an event which does not depend on the status of $e$. If you want, an event on $E(G)\setminus\{e\}$.

\subsection{The van den Berg-H\"aggstr\"om-Kahn inequality}

We will use the titular inequality profusely, so let us start by stating it. Let $G$ be a graph and let $A$ be a set of vertices of $G$. We say that a function $f:\{0,1\}^{E(G)}\to\RR$ is `defined on the cluster of $A$' if for any $\omega,\omega'\subseteq {E(G)}$ for which the cluster of $A$ is identical, i.e.\ $\Cl_\omega(A)=\Cl_{\omega'}(A)$, we have that $f(\omega)=f(\omega')$. 
We say that an event is defined on the cluster of $A$ if its indicator function is defined on the cluster of $A$.

We may now state the inequality (see \cite[theorems 1.2 and 1.4]{vdBHK} for the proof).

\begin{thm*}[van den Berg, H\"{a}ggstr\"om and Kahn (BHK)]\label{rsa}Let $G$ be a finite graph and $A, B\subset G$. Let $f$ and $g$ be two increasing functions defined on the cluster of $A$. Then
  \[
  \EE(fg\,|\, A\nleftrightarrow B) \ge \EE(f\,|\,A\nleftrightarrow B) \EE(g\,|\,A\nleftrightarrow B).
  \]
  The inequality is reversed if either $f$ or $g$ are decreasing (but not both). Contrariwise, if $f$ and $g$ are two increasing functions defined on the clusters of $A$ and $B$ respectively then:
  \[
  \EE(fg\,|\,A\nleftrightarrow B) \le \EE(f\,|\,A\nleftrightarrow B) \EE(g\,|\,A\nleftrightarrow B).
  \]
Again, of course, if either $f$ or $g$ are decreasing then the inequality is reversed.
\end{thm*}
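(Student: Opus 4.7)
My plan is to reduce all four inequalities of the theorem to a single FKG-type statement about the conditional law of $\Cl(A)$. If $f:\{0,1\}^{E(G)}\to\RR$ is increasing in $\omega$ and depends only on $\Cl(A)$, then it descends to a set function $\tilde f$ on the poset of vertex subsets $C\supseteq A$ (ordered by inclusion) that is monotone: given attainable $C\subseteq C'$, one can realise both as $\Cl_\omega(A),\Cl_{\omega'}(A)$ with $\omega\subseteq\omega'$ (take $\omega$ to consist of just a spanning tree of $C$ in $G[C]$, then extend by a spanning structure to reach $C'$), so $\tilde f(C)\le \tilde f(C')$. The event $A\nlr B$ also depends only on $\Cl(A)$, being the decreasing condition $\Cl(A)\cap B=\emptyset$. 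The first inequality of the theorem therefore reduces to the \textbf{key claim}: for any two monotone set functions $F,G$ on the poset $\{C:A\subseteq C,\,C\cap B=\emptyset\}$,
\[
\EE\bigl(F(\Cl(A))G(\Cl(A))\,\big|\,A\nlr B\bigr)\ge \EE\bigl(F(\Cl(A))\,\big|\,A\nlr B\bigr)\EE\bigl(G(\Cl(A))\,\big|\,A\nlr B\bigr).
\]
The three remaining variants follow by sign-flips; in particular, the second (negative correlation) inequality is obtained by first conditioning on $\Cl(A)=C_A$ and writing $h(C_A):=\EE\bigl(g(\Cl(B))\,\big|\,\Cl(A)=C_A\bigr)$, which equals the expectation of $g$ under independent bond percolation on the induced subgraph $G[V\setminus C_A]$ and is therefore monotone in $C_A$ in the opposite direction from $g$; applying the key claim to $\tilde f$ and $h$ then gives the claimed reversal.

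To establish the key claim, I would use an edge-by-edge exploration of $\Cl(A)$ from $A$: fix a total order on $E(G)$, maintain a growing set $S_t$ with $S_0=A$, and at each step query the smallest unexplored edge having exactly one endpoint in $S_t$; if open, add the new endpoint to $S_{t+1}$. The algorithm halts with $S_\infty=\Cl(A)$, and all non-queried edges are independent Bernoulli and play no further role in $\Cl(A)$. Crucially, $A\nlr B$ is exactly the event that no vertex of $B$ is ever added to $S_t$, i.e.\ a stopping condition on the exploration.

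With this representation in hand, I would prove the FKG claim by a monotone coupling in $U$-space: parametrise by i.i.d.\ $U_e\sim\mathrm{Uniform}[0,1]$ and declare $e$ open iff $U_e<p_e$; then $\Cl(A)$ is a decreasing function of $U$ and $A\nlr B$ is an increasing event in $U$. The main obstacle lies here: conditioning a product measure on a generic increasing event does \emph{not} preserve FKG for decreasing functions (easy two-edge examples confirm this), so the argument must genuinely exploit that $\Cl(A)$ depends on $U$ only through the exploration-queried coordinates, which form a stopping-time-determined subset of $E(G)$. I expect the cleanest execution is to run two coupled explorations driven by independent seeds and show that, conditionally on both succeeding (neither hits $B$), the pair can be rearranged into a comonotonic coupling of the two cluster outputs; verifying that the ``stop on hitting $B$'' rule is compatible with this rearrangement is the delicate technical point I anticipate carrying most of the difficulty.
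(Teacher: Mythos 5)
First, note that the paper does not prove this statement at all: it is quoted as an external result, with the proof cited to \cite[theorems 1.2 and 1.4]{vdBHK}. So your attempt is measured against the actual van den Berg--H\"aggstr\"om--Kahn argument rather than anything in this paper. Your preliminary reductions are fine: an increasing $\omega$-function defined on the cluster of $A$ does induce a monotone function on the poset of attainable clusters (and conversely), and your derivation of the second (negative-correlation) inequality from the first, by conditioning on $\Cl(A)=C_A$ and observing that $h(C_A)=\EE\bigl(g(\Cl(B))\mid \Cl(A)=C_A\bigr)$ is percolation on $G[V\setminus C_A]$ and hence decreasing in $C_A$, is a legitimate and standard reduction.

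The genuine gap is that your ``key claim'' --- positive association of the conditional law of $\Cl(A)$ given $\{A\nlr B\}$ --- \emph{is} the theorem (it is exactly BHK's Theorem 1.2 in slightly different clothing), and you do not prove it. The exploration-process reformulation and the $U$-space parametrisation are only a change of coordinates; they do not by themselves produce any correlation inequality, and you correctly observe that conditioning a product measure on an increasing event destroys FKG in general, so something specific to cluster-measurable functions must be used. The proposed fix --- run two explorations with independent seeds and ``rearrange'' them, conditionally on neither hitting $B$, into a comonotonic coupling of the two cluster outputs --- is not an argument: the existence of a comonotone coupling of two independent copies with the correct marginals is essentially equivalent to the positive association you are trying to prove, so invoking it is circular unless you exhibit the coupling, and you explicitly leave the compatibility of the ``stop on hitting $B$'' rule with the rearrangement as an unverified ``delicate technical point.'' There is no reason to expect such a generic rearrangement to exist; the conditioned measure is not a product measure, Harris-type inductions do not apply to it directly, and the actual proof in van den Berg--H\"aggstr\"om--Kahn is a genuinely nontrivial edge-by-edge induction with a careful case analysis, not a soft coupling. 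As it stands, your proposal reduces the theorem to itself plus an unproven (and, in the form stated, dubious) coupling lemma.
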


The following lemmas are simple consequences of the above.
\begin{lem}\label{pitzutzimbainaim} Let $G$ be a graph, $0\in G$, $A=\{a_j\}_{j=1}^n\subset G$ and $X\subset\{1,\dotsc,n\}$. Denote by $A(X)=\{a_j\}_{j\in X}$ the corresponding subset of $A$.


  \enumrom
\item If $Q$ is an increasing event defined on the cluster of $A(X)$ or a decreasing event defined on the cluster of $A(X^c)$ then
\[
\PP(0\lr A(X)\,|\,A(X)\nlr A(X^c))\leq \PP(0\lr A(X)\,|\,A(X)\nlr A(X^c), Q)
\]
\item If $Q$ is a decreasing event defined on the cluster of $A(X)$ or an increasing event defined on the cluster of $A(X^c)$ then
\[
\PP(0\lr A(X)\,|\,A(X)\nlr A(X^c), Q)\leq \PP(0\lr A(X)\,|\,A(X)\nlr A(X^c))
\]
\end{enumerate}
\end{lem}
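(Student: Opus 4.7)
The plan is to recognize that $\{0 \lr A(X)\}$ is itself an increasing event defined on the cluster of $A(X)$, which lets us feed it directly into the BHK inequality alongside $Q$, with the partition of vertices into $A(X)$ and $A(X^c)$ playing the role of the two sets in BHK. Both inequalities in the lemma follow by re-expressing the conditional probability estimates as statements about conditional correlation of two increasing functions of clusters.

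Concretely, for part (i), case one ($Q$ increasing, defined on cluster of $A(X)$), I would apply the first BHK inequality with $A = A(X)$, $f = \mathbbm{1}_{\{0\lr A(X)\}}$ and $g = \mathbbm{1}_Q$, both of which are increasing and defined on the cluster of $A(X)$. This yields
\[
\PP\bigl(\{0\lr A(X)\}\cap Q\,\big|\,A(X)\nlr A(X^c)\bigr)\ge \PP(0\lr A(X)\,|\,A(X)\nlr A(X^c))\,\PP(Q\,|\,A(X)\nlr A(X^c)),
\]
and dividing by $\PP(Q\,|\,A(X)\nlr A(X^c))$ produces the claimed inequality. For case two ($Q$ decreasing, defined on cluster of $A(X^c)$), I would instead apply the second (negatively correlated) form of BHK to $\mathbbm{1}_{\{0\lr A(X)\}}$ (increasing on cluster of $A(X)$) and $\mathbbm{1}_{Q^c}=1-\mathbbm{1}_Q$ (increasing on cluster of $A(X^c)$), obtaining a negative correlation bound for $\{0\lr A(X)\}$ and $Q^c$; rearranging gives a positive correlation bound for $\{0\lr A(X)\}$ and $Q$, which again converts to the desired conditional inequality after dividing.

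Part (ii) would be proved the same way with signs reversed: if $Q$ is decreasing on the cluster of $A(X)$, use the first BHK inequality with $f=\mathbbm{1}_{\{0\lr A(X)\}}$ and $g=\mathbbm{1}_{Q^c}$; if $Q$ is increasing on the cluster of $A(X^c)$, use the second BHK inequality directly with $g=\mathbbm{1}_Q$. In all four cases one only needs to note that the relevant conditional probabilities are well-defined (which is where one quietly assumes $\PP(Q\,|\,A(X)\nlr A(X^c))>0$, else the statement is vacuous). There is no real obstacle here: the only slightly fiddly step is bookkeeping which form of BHK applies and whether to pass to the complement of $Q$; once the observation that $\{0\lr A(X)\}$ is itself an increasing cluster event for $A(X)$ is made, each case is a single line.
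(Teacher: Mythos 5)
Your proposal is correct and follows essentially the same route as the paper: the paper's one-line proof is precisely the application of BHK to the indicator of $\{0\lr A(X)\}$ (increasing, defined on the cluster of $A(X)$) together with $Q$, conditioned on $A(X)\nlr A(X^c)$, followed by dividing through. Your extra bookkeeping (passing to $Q^c$ in the mixed-cluster cases rather than invoking the reversed form of BHK directly) is an equivalent cosmetic variant, so there is nothing substantive to add.
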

\begin{proof}
We prove part (i):  From BHK we have,
\begin{multline*}
\PP(0\leftrightarrow A(X), A(X)\nleftrightarrow A(X^c))\PP (Q,A(X)\nleftrightarrow A(X^c))\leq\\ \PP(0\leftrightarrow A(X), Q, A(X)\nlr A(X^c))\PP( A(X)\nlr A(X^c)).
\end{multline*}
the claim follows. Part (ii) is proved identically.
\end{proof}

\begin{lem}\label{lem:phi}
With the same notations as in the previous lemma, denote in addition $\phi(X):=\PP(0\lr A(X)\,|\,A(X)\nlr A(X^c))$. For some $X\subset\{1,\dotsc,n\}$, assume a decomposition of $X$, $X=\cup_{k=1}^K X_k$ with the sets $X_k$ being disjoint. Then,
\[
\sum_k\phi(X_k)\leq \phi(X).
\]
\end{lem}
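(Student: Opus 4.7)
The plan is to introduce a common refinement of the conditionings appearing in $\phi(X_k)$ and $\phi(X)$, on which the events $\{0 \lr A(X_k)\}$ become disjoint, and then to use Lemma \ref{pitzutzimbainaim} to compare this refined conditional probability with the original ones. Concretely, define
\[
F \coloneqq \Big(\bigcap_{\substack{j,j'\in\{1,\ldots,K\}\\ j\neq j'}} \{A(X_j) \nlr A(X_{j'})\}\Big) \cap \Big(\bigcap_{j=1}^K \{A(X_j) \nlr A(X^c)\}\Big).
\]
Since $A(X) = \bigcup_k A(X_k)$, the second intersection is exactly $\{A(X) \nlr A(X^c)\}$. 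Moreover, for each $k$, since $A(X_k^c) = \bigcup_{j \neq k} A(X_j) \cup A(X^c)$, we have $F \subset \{A(X_k) \nlr A(X_k^c)\}$ as well. The crucial property of $F$ is that, under it, the clusters of $A(X_1), \ldots, A(X_K)$ are pairwise disjoint, so the events $\{0 \lr A(X_k)\}_k$ are pairwise disjoint with union $\{0 \lr A(X)\}$. This yields
\[
\sum_{k=1}^K \PP(0 \lr A(X_k) \mid F) = \PP(0 \lr A(X) \mid F).
\]

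The argument then proceeds by sandwiching this quantity between $\sum_k \phi(X_k)$ and $\phi(X)$. For the lower bound, I write $F = \{A(X_k) \nlr A(X_k^c)\} \cap Q_k$, where $Q_k$ is the intersection of all non-connection events in $F$ that do not involve $A(X_k)$. All sets appearing in $Q_k$ lie in $A(X_k^c)$, so $Q_k$ is a decreasing event defined on the cluster of $A(X_k^c)$, and Lemma \ref{pitzutzimbainaim}(i) applied with $X \coloneqq X_k$ yields $\phi(X_k) \leq \PP(0 \lr A(X_k) \mid F)$. For the upper bound, I rewrite $F = \{A(X) \nlr A(X^c)\} \cap Q$, where $Q \coloneqq \bigcap_{j \neq j'}\{A(X_j) \nlr A(X_{j'})\}$. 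Since the sets $A(X_j)$ are all subsets of $A(X)$, the event $Q$ is decreasing and defined on the cluster of $A(X)$, and Lemma \ref{pitzutzimbainaim}(ii) gives $\PP(0 \lr A(X) \mid F) \leq \phi(X)$.

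Chaining these three inequalities yields the conclusion $\sum_k \phi(X_k) \leq \phi(X)$. I do not expect a serious technical obstacle: the essential step is picking the right common refinement $F$, after which both applications of Lemma \ref{pitzutzimbainaim} are direct checks that the auxiliary events are monotone and live on the appropriate cluster. The only mildly subtle point is verifying that the non-connection events among subsets of $A(X_k^c)$ (resp.\ $A(X)$) are genuinely defined on the cluster of $A(X_k^c)$ (resp.\ $A(X)$) in the BHK sense, which uses the fact that any path realizing such a connection necessarily stays within the relevant cluster.
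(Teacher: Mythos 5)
Your proposal is correct and follows essentially the same route as the paper: your event $F$ coincides with the event $M$ used there (the conjunction of $\{A(X)\nlr A(X^c)\}$ with $\{A(X_k)\nlr A(X_k^c)\}$ for all $k$), and your two applications of lemma \ref{pitzutzimbainaim}, together with the observation that under $F$ the events $\{0\lr A(X_k)\}$ are disjoint and sum to $\{0\lr A(X)\}$, are exactly the steps of the paper's proof.
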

\begin{proof}
Let $M$ be the event defined by
\[
M\coloneqq\{A(X)\nlr A(X^c), A(X_k)\nlr A(X^c_k), \forall k=1,\dotsc,K\}.
\]
For $k=1,\dotsc,K$ we can write $M$ as $M=\{A(X_k)\nlr A(X^c_k)\}\cap Q$ where $Q$ is a decreasing event defined on the cluster of $A(X^c_k)$. By part (i) of the previous lemma we therefore have
\[
\phi(X_k)\leq \PP(0\lr A(X_k)\,|\,M)
\]
and so
\[
\sum_{k=1}^K\phi(X_k)\leq \sum_{k=1}^K\PP(0\lr A(X_k)\,|\,M)=\PP(0\lr A(X)\,|\,M)
\]
where the equality follows because under $M$ these are disjoint events.

On the other hand, we can write $M$ also as $M=\{A(X)\nlr A(X^c)\}\cap Q$ where $Q$ is a decreasing event defined on the cluster of $A(X)$. By part (ii) of the same lemma we therefore get
\[
\PP(0\lr A(X)\,|\,M)\leq \phi(X).
\]
Combining the last two inequalities the result follows.
\end{proof}

\begin{lem}\label{lemma; compare}
Let $G$ be a graph, $a_1,a_2, b\in G$ and $\delta>0$ be such that
\[
\PP(a_1\lr b)<\PP(a_2\lr b)+\delta.
\]
\enumrom
\item If $Q$ is an increasing event in the cluster of $a_2$ then we have
\[
\PP(a_1\lr b\,|\,Q)<\PP(a_2\lr b\,|\,Q)+\delta.
\]
\item If $Q$ is a decreasing event in the cluster of $a_1$ then we have
\[
\PP(a_1\lr b, Q)<\PP(a_2\lr b, Q)+\delta.
\]
\end{enumerate}
\end{lem}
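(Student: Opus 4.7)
The plan is to reduce everything to statements conditional on $\{a_1 \nlr a_2\}$. On the complementary event $\{a_1 \lr a_2\}$ the events $\{a_1 \lr b\}$ and $\{a_2 \lr b\}$ coincide, so they cancel in any difference of the form $\PP(a_1 \lr b, \star) - \PP(a_2 \lr b, \star)$. Setting
\[
\Delta^* := \PP(a_2 \lr b \mid a_1 \nlr a_2) - \PP(a_1 \lr b \mid a_1 \nlr a_2),
\]
and defining $\Delta^*_Q$ analogously with the extra conditioning on $Q$, the hypothesis becomes $\PP(a_1 \nlr a_2)\Delta^* > -\delta$; the conclusion in (i) becomes $\PP(a_1 \nlr a_2 \mid Q)\Delta^*_Q > -\delta$ and the one in (ii) becomes $\PP(a_1 \nlr a_2, Q)\Delta^*_Q > -\delta$.

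The next step is to apply BHK, conditioned on $\{a_1 \nlr a_2\}$ (with $A = \{a_1\}$ and $B = \{a_2\}$), in order to prove $\Delta^*_Q \ge \Delta^*$. In case~(i), both $Q$ and $\{a_2 \lr b\}$ are increasing on the cluster of $a_2$, so the first form of BHK gives $\PP(a_2 \lr b \mid Q, a_1 \nlr a_2) \ge \PP(a_2 \lr b \mid a_1 \nlr a_2)$; on the other hand $Q$ lives on the cluster of $a_2$ while $\{a_1 \lr b\}$ lives on the cluster of $a_1$, so the second form of BHK gives $\PP(a_1 \lr b \mid Q, a_1 \nlr a_2) \le \PP(a_1 \lr b \mid a_1 \nlr a_2)$; subtracting yields $\Delta^*_Q \ge \Delta^*$. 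Case~(ii) is dual: with $Q$ now decreasing on the cluster of $a_1$, the second form of BHK (reversed by the decreasing factor) yields the same bound on $\PP(a_2 \lr b \mid \cdot)$, and the first form of BHK (also reversed) gives the same bound on $\PP(a_1 \lr b \mid \cdot)$, producing $\Delta^*_Q \ge \Delta^*$ once more.

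Finally I combine. If $\Delta^* \ge 0$ then $\Delta^*_Q \ge 0$ and the target is immediate. Otherwise $\Delta^* < 0$; multiplying $\Delta^*_Q \ge \Delta^*$ by the nonnegative probability factor in front preserves the inequality. In case~(ii), $\PP(a_1 \nlr a_2, Q) \le \PP(a_1 \nlr a_2)$ is trivial; in case~(i), $\PP(a_1 \nlr a_2 \mid Q) \le \PP(a_1 \nlr a_2)$ is the FKG inequality applied to the two increasing events $\{a_1 \lr a_2\}$ and $Q$. Because $\Delta^*$ is negative, multiplying it by a smaller nonnegative number only enlarges the (negative) product, so the relevant probability factor times $\Delta^*_Q$ is at least $\PP(a_1 \nlr a_2)\Delta^* > -\delta$, as required. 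The only bookkeeping hazard is tracking the direction of BHK across `same cluster vs.\ different cluster' and `increasing vs.\ decreasing' for~$Q$; once that is settled, the rest is a short bit of sign-chasing.
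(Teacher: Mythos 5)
Your argument is correct and is essentially the paper's own proof in different packaging: you decompose along $\{a_1\nlr a_2\}$ (the paper subtracts $\PP(a_1\lr a_2\lr b)$ and divides by $\PP(a_1\nlr a_2)$), you use the same four BHK comparisons (the paper invokes them through lemma \ref{pitzutzimbainaim}(i)--(ii)) to get $\Delta^*_Q\ge\Delta^*$, and you close with exactly the same endgame, namely FKG giving $\PP(a_1\nlr a_2\,|\,Q)\le\PP(a_1\nlr a_2)$ in case (i) and the trivial bound $\PP(a_1\nlr a_2,Q)\le\PP(a_1\nlr a_2)$ in case (ii). The sign-chasing via $p\Delta^*_Q\ge p\Delta^*\ge q\Delta^*>-\delta$ is sound, so no gap.
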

\begin{proof}
We first prove (i). We start by removing $\PP(a_1\lr a_2\lr b)$ from the given inequality, and dividing it by $\PP(a_1\nlr a_2)$. We get
\[
\PP(a_1\lr b\,|\, a_1\nlr a_2)<\PP(a_2\lr b\,|\, a_1\nlr a_2)+\frac{\delta}{\PP(a_1\nlr a_2)}.
\]
Next, we apply lemma \ref{pitzutzimbainaim}(i) to the right hand side and lemma \ref{pitzutzimbainaim}(ii) to the left hand side. In both cases we use the lemma wirth $0_{\textrm{lemma\ref{pitzutzimbainaim}}}=b$ and $A_{\textrm{lemma\ref{pitzutzimbainaim}}}=\{a_1,a_2\}$. We find that
\begin{equation}\label{compare}
\PP(a_1\lr b\,|\, a_1\nlr a_2, Q)<\PP(a_2\lr b\,|\, a_1\nlr a_2,Q)+\frac{\delta}{\PP(a_1\nlr a_2)}.
\end{equation}
We now multiply both sides by $\PP(a_1\nlr a_2\,|\, Q)$ and get
\[
\PP(a_1\lr b, a_1\nlr a_2\,|\, Q)<\PP(a_2\lr b, a_1\nlr a_2\,|\, Q)+\frac{\delta \PP(a_1\nlr a_2\,|\, Q)}{\PP(a_1\nlr a_2)}.
\]
Since $a_1\nlr a_2$ is a decreasing event and $Q$ is an increasing event the FKG inequality allows to bound the rightmost term and we get 
\[
\PP(a_1\lr b, a_1\nlr a_2\,|\, Q)<\PP(a_2\lr b, a_1\nlr a_2\,|\, Q)+\delta.
\]
adding $\PP(a_1\lr a_2\lr b\,|\,Q)$ to both sides completes the proof for part (i). The proof for part (ii) is similar, but the FKG inequality is no longer available for us. So in this case we multiply both sides of \eqref{compare} by $\PP(a_1\nlr a_2, Q)$ (which is clearly smaller then $\PP(a_1\nlr a_2)$). The proof is now completed in the same way as above.
\end{proof}
\section{Some cases where the inequality holds}\label{sec:simple cases}

In this section we discuss some particular cases in which we can show that conjectures \ref{conj:postFKG} and \ref{conj:preFKG} hold. In \S\ref{sec:small A} we focus on cases where $|A|$ is small, and in \S\ref{sec:A close} we discuss cases where $0$ is `close' to $A$.

\subsection{Small $A$}\label{sec:small A}

\begin{thm}\label{thm:A=2}The pre-FKG conjecture \eqref{eq:preFKGA} holds when $|A|=2$.
\end{thm}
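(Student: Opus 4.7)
Let $A=\{a_1,a_2\}$ and split on $E:=\{a_1\lr a_2\}$. On $E$, since $a_1,a_2$ lie in a common cluster, for either choice of $i$ we have
\[
\{0\lr A,\, a_i\lr b\}\cap E \;=\; \{0\lr a_1,\, a_1\lr b\}\cap E,
\]
which by transitivity is contained in $\{0\lr b,\,0\lr A\}\cap E$. So \eqref{eq:preFKGA} restricted to $E$ holds for \emph{both} indices $i$. Once we also establish \eqref{eq:preFKGA} restricted to $E^c$, we are done: take $i^\star$ witnessing the minimum on $E^c$; on $E$ the same $i^\star$ works (because both do), and adding the two restricted inequalities yields $\PP(0\lr b,0\lr A)\ge \PP(0\lr A, a_{i^\star}\lr b)\ge \min_i \PP(0\lr A, a_i\lr b)$.

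Now condition throughout on $E^c=\{a_1\nlr a_2\}$. On this event, $\{0\lr a_1\}$ and $\{0\lr a_2\}$ are disjoint (else $a_1\lr 0\lr a_2$); moreover, on $\{0\lr a_i\}$ the events $\{0\lr b\}$ and $\{a_i\lr b\}$ coincide by transitivity. Writing $Q_{ij}:=\PP(0\lr a_i,\,a_j\lr b\mid E^c)$, these remarks give
\[
\PP(0\lr b,0\lr A\mid E^c) = Q_{11}+Q_{22}, \qquad \PP(0\lr A, a_j\lr b\mid E^c) = Q_{1j}+Q_{2j}.
\]
Hence the target on $E^c$ reduces to
\[
Q_{11}+Q_{22} \;\ge\; \min\bigl(Q_{11}+Q_{21},\, Q_{12}+Q_{22}\bigr),
\]
equivalently, the dichotomy that at least one of $Q_{11}\ge Q_{12}$ or $Q_{22}\ge Q_{21}$ holds.

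This last dichotomy is the heart of the matter, and it follows from BHK applied with its ``$A$'' and ``$B$'' taken as $\{a_i\},\{a_{3-i}\}$, so that the conditioning event is exactly $E^c$. Set $\pi_{0,i}:=\PP(0\lr a_i\mid E^c)$ and $\pi_{j,b}:=\PP(a_j\lr b\mid E^c)$. The two indicators $\mathbbm 1\{0\lr a_i\}$ and $\mathbbm 1\{a_i\lr b\}$ are increasing and both defined on the cluster of $a_i$, so BHK (first form) gives $Q_{ii}\ge \pi_{0,i}\pi_{i,b}$. For $i\ne j$, $\mathbbm 1\{0\lr a_i\}$ is defined on the cluster of $a_i$ while $\mathbbm 1\{a_j\lr b\}$ is defined on the cluster of $a_j$, so BHK (second form) gives $Q_{ij}\le \pi_{0,i}\pi_{j,b}$. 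Pigeonhole finishes: if both $Q_{11}<Q_{12}$ and $Q_{22}<Q_{21}$ held, these four estimates would force $\pi_{1,b}<\pi_{2,b}$ and $\pi_{2,b}<\pi_{1,b}$ simultaneously, which is impossible. The only real obstacle is precisely this ``which $a_i$ to pick'' choice on $E^c$; BHK is well suited to it because both clusters appear symmetrically in the conditioning.
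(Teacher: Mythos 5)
Your argument is correct, and at its core it is the same proof as the paper's: you split on $\{a_1\lr a_2\}$ (where the claim is trivial for both indices), and on $\{a_1\nlr a_2\}$ you make exactly the four BHK applications the paper makes — the ``same-cluster'' form for $Q_{11},Q_{22}$ and the ``different-clusters'' form for $Q_{12},Q_{21}$, with your $\pi_{0,i}$ being precisely the paper's $\phi(i)$. The only divergence is the finishing move: you conclude by pigeonhole (both $Q_{11}<Q_{12}$ and $Q_{22}<Q_{21}$ would force $\pi_{1,b}<\pi_{2,b}<\pi_{1,b}$), which delivers the minimum bound directly, whereas the paper chooses the explicit weights $\alpha=\phi(1)/(\phi(1)+\phi(2))$, $\beta=\phi(2)/(\phi(1)+\phi(2))$ so that the error terms cancel identically, yielding the strictly stronger convex-combination inequality \eqref{eq:thm A=2} with coefficients that do not depend on $b$ — a feature the authors single out (see the remark following the theorem and \S\ref{sec:coef}) and which your dichotomy does not produce. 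One cosmetic point: your conditional probabilities $Q_{ij}$, $\pi_{\cdot,\cdot}$ require $\PP(a_1\nlr a_2)>0$; in the degenerate case the restricted inequality on $E^c$ is vacuously an equality of zeros, so this costs nothing, but it is cleaner to work with joint probabilities restricted to $\{a_1\nlr a_2\}$ as the paper does.
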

\begin{proof}Recall the notation $\phi$ from lemma \ref{lem:phi} and denote for brevity $\phi(1)\coloneqq\phi(\{1\})$ and $\phi(2)\coloneqq\phi(\{2\})$. Denote
  \begin{equation}\label{eq:A=2coeffs}
  \alpha\coloneqq\frac{\phi(1)}{\phi(1)+\phi(2)}\qquad \beta\coloneqq\frac{\phi(2)}{\phi(1)+\phi(2)}.
  \end{equation}
The theorem will follow if we prove that
\begin{equation}\label{eq:thm A=2}
  \PP(0\lr b\lr A)\geq \alpha\PP(0\lr A, a_1\lr b)+\beta \PP(0\lr A, a_2\lr b),
\end{equation}
so we focus on proving \eqref{eq:thm A=2}. Since $\alpha+\beta=1$ this is the same as
\[
(\alpha+\beta)\PP(0\lr b\lr A)\geq \alpha\PP(0\lr A, a_1\lr b)+\beta \PP(0\lr A, a_2\lr b).
\]
Shifting everything to the left, and subtracting some common events from both couples (e.g. $0\lr a_1\lr b$ for the first couple) we get that we need to show
\begin{multline*}
L\coloneqq
\alpha\big(\PP(0\lr a_2\lr b , a_1\nlr a_2)-\PP(0\lr a_2,a_1\lr b, a_1\nlr a_2)\big)+\\
+\beta\big(\PP(0\lr a_1\lr b , a_1\nlr a_2)-\PP(0\lr a_1,a_2\lr b, a_1\nlr a_2)\big)\geq 0.
\end{multline*}
Applying BHK 
4 times gives
\begin{align*}
  L&\ge \alpha\big(\PP(0\lr a_2\,|\, a_1\nlr a_2)\PP(a_2\lr b,a_1\nlr a_2) \\
  &\qquad -\PP(0\lr a_2\,|\,a_1\nlr a_2)\PP(a_1\lr b,a_1\nlr a_2)\big)\\
  &+\;\beta\big(\PP(0\lr a_1\,|\,a_1\nlr a_2)\PP(a_1\lr b,a_1\nlr a_2)\\
  &\qquad -\PP(0\lr a_1\,|\,a_1\nlr a_2)\PP(a_2\lr b,a_1\nlr a_2)\big)\\
  &=\PP(a_2\lr b,a_1\nlr a_2)(\alpha\phi(2)-\beta\phi(1))\\
  &+\; \PP(a_1\lr b,a_1\nlr a_2)(-\alpha\phi(2)+\beta\phi(1))=0,
\end{align*}
where the first equality is simply collecting terms and using the definition of $\phi$, while the second equality is due to our definition of $\alpha$ and $\beta$. Thus \eqref{eq:thm A=2} is proved and so is the theorem.
\end{proof}

\begin{rem*}It is also possible to prove theorem \ref{thm:A=2} as a corollary of lemma \ref{lemma; compare}, but such a proof would not give the coefficients in (\ref{eq:A=2coeffs}), which we find intriguing. We go back to this issue in \S \ref{sec:coef}.
\end{rem*}
We now assume $|A|=3$.
In preparation for theorem \ref{thm:A=3} below, let us first show an easier result. Denote for a $S\subset\{1,2,3\}$
\[
m_S:=\{b\lr a_s\: \forall s\in S, b\nlr A(S^c)\}.
\]
\begin{thm}Assume that for some $j\in\{1,2,3\}$ we have $m_j\leq m_{\{1,2,3\}\setminus j}$. Then the pre-FKG conjecture \eqref{eq:preFKGA} holds.
\end{thm}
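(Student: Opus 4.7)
My plan is to reduce the inequality to the two-vertex case of theorem \ref{thm:A=2}, using the hypothesis to absorb the contribution of the distinguished vertex $a_j$. Without loss of generality take $j=3$, so the hypothesis reads
\[
\PP(b \lr a_3,\ b \nlr a_1,\ b \nlr a_2) \le \PP(b \lr a_1,\ b \lr a_2,\ b \nlr a_3).
\]
I would first partition the sample space by the disjoint events $\{m_S\}_{S \subseteq \{1,2,3\}}$, which classify configurations by which subset of $A$ lies in the cluster of $b$. A direct case analysis (based on whether the cluster of $0$ coincides with, is disjoint from, or meets the cluster of $b$) yields the decompositions
\begin{align*}
\PP(0 \lr b,\ 0 \lr A) &= \PP(0 \lr b,\ 0 \lr \{a_1,a_2\}) + \PP(0 \lr a_3, m_3), \\
\PP(0 \lr A,\ a_i \lr b) &= \PP(0 \lr \{a_1,a_2\},\ a_i \lr b) + r_i, \qquad i \in \{1,2\},
\end{align*}
where $r_i$ records contributions from the events $m_S$ with $3 \notin S$ and $S \ni i$, on which the cluster of $0$ separately reaches $a_3$ without passing through the cluster of $b$.

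Next I would apply theorem \ref{thm:A=2} to the pair $A' = \{a_1, a_2\}$, but in the sharper convex-combination form actually extracted in its proof (with the weights $\alpha, \beta$ from \eqref{eq:A=2coeffs}):
\[
\PP(0 \lr b,\ 0 \lr \{a_1,a_2\}) \ge \alpha\, \PP(0 \lr \{a_1,a_2\},\ a_1 \lr b) + \beta\, \PP(0 \lr \{a_1,a_2\},\ a_2 \lr b).
\]
Combining with the decompositions above, the pre-FKG bound \eqref{eq:preFKGA} --- which is implied by \emph{any} convex lower bound, in particular one with weights $\alpha, \beta, 0$ --- reduces to the single residual inequality
\[
\PP(0 \lr a_3,\ m_3) \ \ge\ \alpha r_1 + \beta r_2.
\]

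Finally I would establish this residual using the hypothesis $m_3 \le m_{\{1,2\}}$. An enumeration of the $m_S$-contributions expresses $\alpha r_1 + \beta r_2$ as a combination of $\PP(0\lr a_3, m_S)$-type terms for $S \in \{\{1\}, \{2\}, \{1,2\}\}$, in which the $m_{\{1,2\}}$ contribution appears with full weight $\alpha + \beta = 1$ (since both $r_1$ and $r_2$ receive it). The key comparison is therefore between $\PP(0 \lr a_3, m_3)$ and $\PP(0 \lr a_3, m_{\{1,2\}})$: both are events under which $a_3$ is isolated from $\{a_1, a_2\}$ through the cluster of $b$, so I would condition on $\{a_3 \nlr \{a_1, a_2\}\}$ and apply BHK (as in lemma \ref{pitzutzimbainaim}) to decouple the relevant clusters, turning the unconditional hypothesis into the desired comparison; the smaller $m_1$ and $m_2$ residuals are absorbed by the weights $\alpha, \beta$ via the same cancellation as $\alpha \phi(2) = \beta \phi(1)$ in the proof of theorem \ref{thm:A=2}. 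The main obstacle is exactly this decoupling step: the hypothesis is a statement purely about the cluster of $b$, whereas the residual inequality couples the clusters of $b$ and $0$, and arranging the BHK conditioning so that each factor of the form $\{0 \lr a_3\}$ is monotone in the correct direction relative to the conditioning event is delicate.
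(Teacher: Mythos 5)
Your reduction has a genuine gap, and it is not in the delicate BHK/decoupling step you flag at the end --- it is one step earlier: the residual inequality $\PP(0\lr a_3,\,m_3)\ \ge\ \alpha r_1+\beta r_2$ is simply false under the theorem's hypothesis, because the intermediate target you reduce to (a convex lower bound on $\PP(0\lr b,0\lr A)$ using only the two vertices $a_1,a_2$, with the distinguished vertex carrying weight $0$) is itself false. Concretely, take $G$ with vertices $0,a_1,a_2,a_3,b$ and edges $\{0,a_3\}$ open with probability $1$, $\{b,a_1\}$ and $\{b,a_2\}$ open with probability $0.99$, and $\{b,a_3\}$ open with probability $0.01$ (no other edges). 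Then $m_3=(0.01)^3$ and $m_{12}=(0.99)^3$, so the hypothesis holds with $j=3$ (and $a_3$ is even the minimiser of $\PP(a\lr b)$, so this is not cured by choosing the label differently). But $\PP(0\lr b,0\lr A)=\PP(a_3\lr b)=0.01$, while $\PP(0\lr A,a_i\lr b)=\PP(a_i\lr b)=0.99$ for $i=1,2$; hence \emph{no} convex combination of the latter two quantities can lower-bound the former, and in your bookkeeping the residual inequality becomes (up to small corrections) $0\ge 1$. The conjecture \eqref{eq:preFKGA} does hold here, but only because the minimum is attained at $a_3$: the point whose contribution your scheme tries to discard is exactly the one the inequality must be proved for, and its term cannot be absorbed into the $a_1,a_2$ terms. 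So the final step cannot be repaired by any arrangement of the BHK conditioning; the statement it would need to establish is false.

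The correct route (the one the paper takes) keeps all the weight on the minimiser: relabel so that $\PP(a_3\lr b)\le\PP(a_j\lr b)$ for $j=1,2$, deduce $m_3\le m_{12}$ from the hypothesis via the identity $\PP(a_j\lr b)-\PP(a_3\lr b)=(m_j+m_{jk})-(m_3+m_{3k})$, and then prove $\PP(0\lr b,0\lr A)\ge\PP(0\lr A,a_3\lr b)$ directly. One decomposes the left side according to $\Cl(0)\cap A$ and the right side according to $\Cl(b)\cap A$, cancels the common events containing $0\lr b\lr a_3$, and bounds the three resulting differences by six applications of BHK, giving terms of the form $\phi(1,2)(m_{12}-m_3)$, $\phi(1)(m_1-m_{23})$, $\phi(2)(m_2-m_{13})$; the superadditivity $\phi(1)+\phi(2)\le\phi(1,2)$ of lemma \ref{lem:phi}, together with $m_{12}\ge m_3$ and the minimality of $\PP(a_3\lr b)$, then makes the sum nonnegative. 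Your opening decompositions of $\PP(0\lr b,0\lr A)$ and $\PP(0\lr A,a_i\lr b)$ are correct as identities, and the convex-combination form of theorem \ref{thm:A=2} is legitimately available, but the overall strategy of eliminating $a_3$ from the convex combination has to be abandoned.
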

\begin{proof}
Again we use the notation $\phi$ from lemma \ref{lem:phi} and again we drop the braces for brevity, e.g.\ $\phi(1,2)\coloneqq\phi(\{1,2\})$.

Without loss of generality we may assume that $\PP(a_3\lr b)\leq \PP(a_j\lr b)$ for $j=1,2$. We now claim that this condition implies that $m_3\leq m_{12}$. Indeed, if the $j$ in the statement of the theorem is 3 this is immediate, and if not we write
\begin{equation}\label{eq:Pm}
  \PP(a_j\lr b)-\PP(a_3\lr b) = (m_j+m_{jk}) - (m_3+m_{3k}),
\end{equation}
where $k$ is the third index (i.e.\ the one different from 3 and $j$). Since the left-hand side of \eqref{eq:Pm} is positive by our assumption and since $m_j\le m_{3k}$ by the assumption of the theorem, we get $m_3\le m_{12}$.

The theorem will follow if we show that $\PP(0\lr b\lr A)\ge \PP(0\lr A,a_3\lr b)$. For this purpose, break $\PP(0\lr b\lr A)$ according to the subset $\Cl(0)\cap A$ and break $\PP(0\lr A,a_3\lr b)$ according to subset $\Cl(b)\cap A$, and for both expansions substract out all terms where $0\lr b\lr a_3$. We get
\[
\PP(0\lr b, 0\lr A)-\PP(0\lr A, a_3\lr b)= I+II+III
\]
where
\begin{align*}
  I&:= \PP(0\lr a_1\lr a_2\lr b, \{a_1,a_2\}\nlr a_3)\\
  &\qquad-\; \PP(0\lr \{a_1, a_2\}, a_3\lr b, \{a_1,a_2\}\nlr a_3)\\
  II&:= \PP(0\lr a_1\lr b, \{a_2,a_3\}\nlr a_1)\\
  &\qquad-\; \PP(0\lr a_1, a_2\lr a_3\lr b, \{a_2,a_3\}\nlr a_1)\\
  III&:= \PP(0\lr a_2\lr b, \{a_1,a_3\}\nlr a_2)\\
  &\qquad-\; \PP(0\lr a_2, a_1\lr a_3\lr b, \{a_1,a_3\}\nlr a_2).
\end{align*}
Applying the van den Berg-H\"aggstr\"om-Kahn inequality to each of these (a total of 6 applications) we find that
\[
I\geq \phi(1,2)(m_{12}-m_3)\qquad
II\geq \phi(1)(m_1-m_{23})
\qquad
III\geq \phi(2)(m_2-m_{13})
\]
Since we know that $m_{12}-m_3\geq 0$, we can use lemma \ref{lem:phi} to give the following estimate for $I$:
\[
I\geq (\phi(1)+\phi(2))(m_{12}-m_3).
\]
We therefore get
\begin{align*}
  \lefteqn{I+II+III}\quad&\\
  &\geq \phi(1)\big( (m_1+m_{12})-(m_3+m_{23})\big)+ \phi(2)\big( (m_2+m_{12})-(m_3+m_{13})\big)\\
  &\stackrel{\eqref{eq:Pm}}{=}
 \phi(1)\big( \PP(a_1\lr b)-\PP(a_3\lr b)\big)+  \phi(2)\big( \PP(a_2\lr b)-\PP(a_3\lr b)\big).
\end{align*}
By our assumption regarding the minimality of $a_3$ the last expression is nonnegative. This completes the proof.
\end{proof}


\begin{lem}\label{lem:piv comp}
Let $G$ be a graph and $a_1,a_2,a_3,b\in G$. Then
\begin{align}
  \max_{j=1,2}\,&\PP(\{a_1,a_2\} \textrm{ is pivotal for } a_j\lr b)\nonumber\\
  \geq\; &\PP(\{a_1,a_2\} \textrm{ is pivotal for }a_3\lr b).\label{eq:piv}
\end{align}
\end{lem}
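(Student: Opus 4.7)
The approach is to convert the pivotality events into explicit connection events and then apply the van den Berg--H\"aggstr\"om--Kahn (BHK) inequality conditioned on $\{a_1\nlr a_2\}$.

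First, I would interpret ``$\{a_1,a_2\}$ is pivotal for $a_j\lr b$'' as the event that $a_j\lr b$ fails in $\omega$ but holds in $\omega\cup\{(a_1,a_2)\}$, i.e.\ that adding an edge between $a_1$ and $a_2$ makes the connection $a_j\lr b$ succeed. Since $\{a_j\lr b\}$ is increasing, a short case analysis shows that for $j=3$ the pivotality event, which I will call $E_3$, equals
\[
\{a_1\nlr a_2\}\cap\bigl(\{a_3\lr a_1,\, b\lr a_2\}\cup\{a_3\lr a_2,\, b\lr a_1\}\bigr),
\]
and under $\{a_1\nlr a_2\}$ the two sets in the union are disjoint (their intersection would force $a_1\lr a_3\lr a_2$). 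For $j=1$ and $j=2$ the corresponding events simplify to $E_1=\{b\lr a_2,\, a_1\nlr a_2\}$ and $E_2=\{b\lr a_1,\, a_1\nlr a_2\}$ respectively.

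Next, I would condition on $\{a_1\nlr a_2\}$ and write $q_i:=\PP(a_3\lr a_i\mid a_1\nlr a_2)$ and $r_i:=\PP(b\lr a_i\mid a_1\nlr a_2)$, so that $\PP(E_j\mid a_1\nlr a_2)=r_{3-j}$ for $j\in\{1,2\}$. The BHK inequality applies to each summand of $E_3$: the event $\{a_3\lr a_1\}$ is an increasing event defined on the cluster of $a_1$, while $\{b\lr a_2\}$ is increasing and defined on the cluster of $a_2$, so
\[
\PP(a_3\lr a_1,\, b\lr a_2\mid a_1\nlr a_2)\le q_1 r_2,
\]
and analogously for the other summand, giving $\PP(E_3\mid a_1\nlr a_2)\le q_1 r_2+q_2 r_1$.

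Finally, the events $\{a_3\lr a_1\}$ and $\{a_3\lr a_2\}$ are disjoint under $\{a_1\nlr a_2\}$, so $q_1+q_2\le 1$. The elementary bound $q_1 r_2+q_2 r_1\le (q_1+q_2)\max(r_1,r_2)\le \max(r_1,r_2)$ then yields $\PP(E_3\mid a_1\nlr a_2)\le \max(\PP(E_1\mid a_1\nlr a_2),\PP(E_2\mid a_1\nlr a_2))$, and multiplying by $\PP(a_1\nlr a_2)$ gives the lemma. I do not anticipate a substantial obstacle; the one place to be careful is the translation of the pivotality events into explicit connection events and the verification that BHK's hypotheses (increasingness and cluster-definedness) are met after the conditioning.
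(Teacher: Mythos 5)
Your proof is correct, but it takes a genuinely different route from the one in the paper. You translate the three pivotality events directly into connection events (your identifications of $E_1$, $E_2$, $E_3$, and the disjointness of the two pieces of $E_3$ on $\{a_1\nlr a_2\}$, are all right), condition on $\{a_1\nlr a_2\}$, and then use only the ``different clusters'' half of BHK --- $\{a_3\lr a_1\}$ is increasing and defined on the cluster of $a_1$, $\{b\lr a_2\}$ on the cluster of $a_2$ --- followed by the elementary bound $q_1r_2+q_2r_1\le(q_1+q_2)\max(r_1,r_2)\le\max(r_1,r_2)$. The paper instead deduces the lemma from theorem \ref{thm:A=2}: assuming without loss of generality that $a_2$ minimises $\PP(a_j\lr b,a_3\lr\{a_1,a_2\})$, it applies \eqref{eq:preFKGA} with $a_3$ playing the role of $0$ and $A=\{a_1,a_2\}$, and then massages the resulting inequality (subtracting $\PP(a_3\lr a_2\lr b)$, adding $\PP(a_1\lr b,a_2\lr a_3,a_1\nlr a_2)$) into exactly the event identity you wrote down. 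Both arguments ultimately rest on BHK, but yours is self-contained and arguably simpler, since it bypasses theorem \ref{thm:A=2} and the choice of a minimising index, whereas the paper's version emphasises that the lemma is a formal consequence of the already-established $|A|=2$ case of the conjecture. Two minor points you should make explicit: since pivotality does not depend on the status of the edge $\{a_1,a_2\}$, all connections in your events should be read in $\omega\setminus\{a_1,a_2\}$ (equivalently, work in the graph with that edge deleted or conditioned closed), which is also what legitimises your conditioning; and if $\PP(a_1\nlr a_2)=0$ then all three pivotality probabilities vanish and the claim is trivial.
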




\begin{proof}

  The lemma is symmetric between $a_1$ and $a_2$ so without loss of generality we may assume that $a_2$ is the one that minimises $\PP(a_j\lr b,a_3\lr \{a_1,a_2\})$. We apply theorem \ref{thm:A=2} with $0_{\textrm{theorem \ref{thm:A=2}}}=a_3$ and get
\[
\PP(a_3\lr b, a_3\lr \{a_1,a_2\})\geq \PP(a_2\lr b, a_3\lr \{a_1,a_2\}).
\]
We subtract $\PP(a_3\lr a_2\lr b)$ from both sides and get
\[
\PP(a_3\lr a_1 \lr b, a_2\nlr a_1)\geq \PP(a_2\lr b, a_1\lr a_3, a_1\nlr a_2).
\]
Certainly, this implies
\[
\PP(a_1\lr b, a_2\nlr \{a_1,a_3\})\geq \PP(a_2\lr b, a_1\lr a_3, a_1\nlr a_2).
\]
We add $\PP(a_1\lr b, a_2\lr a_3, a_1\nlr a_2)$ to both sides and get
\begin{multline*}
  \PP(a_1\lr b, a_2\nlr b)\geq \\
  \PP(a_1\lr b, a_2\lr a_3, a_1\nlr a_2)+ \PP(a_2\lr b, a_1\lr a_3, a_1\nlr a_2).
\end{multline*}
This can be reformulated as  \eqref{eq:piv}, as needed.
\end{proof}

\begin{rem*} For a graph $G$ and an edge $e$ denote by $G\Ground e$ the graph one gets by gluing the two vertices of the edge $e$ (this definition will be used also in \S\ref{sec:misc} in various places). Denote $G^*=G\Ground \{a_1,a_2\}$. It is not difficult to see that \eqref{eq:piv} can be reformulated as
\begin{equation}\label{eq:piv2}
\PP_{G^*}(\{a_1,a_2\}\lr b)- \PP_{G^*}(a_3\lr b)\geq \min_{j=1,2}\PP_{G}(a_j\lr b)- \PP_{G}(a_3\lr b).
\end{equation}
Note that both sides in the above inequality may be either negative or positive.
The minimum on the left hand side is necessary, i.e.\ it is in general not true that
  \[
  \PP_{G^*}(\{a_1,a_2\}\lr b)-\PP_{G^*}(a_3\lr b)\ge \PP_G(a_1\lr b)-\PP_G(a_3\lr b).
  \]
  To see this simply take $a_1=b$ and then the expression becomes $\PP_{G^*}(a_3\lr b)\le \PP_G(a_3\lr b)$ which is almost never correct because the FKG inequality gives the opposite direction.
\end{rem*}

\begin{thm}\label{thm:A=3}If $|A|=3$ and $A$ separates 0 from $b$, namely, any path in $G$ from 0 to $b$ must pass through a point of $A$, then the pre-FKG inequality in the form \eqref{eq:preFKGA} holds.
\end{thm}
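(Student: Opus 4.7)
The plan is to reduce Theorem~\ref{thm:A=3} to the two-element case by isolating $a_3$ from $\{a_1,a_2\}$ and exploiting separation. Assume WLOG that the minimum on the right-hand side of~\eqref{eq:preFKGA} is attained at $a_3$; since separation gives $\{0\lr b\}\subseteq\{0\lr A\}$, the target reduces to $\PP(0\lr b)\ge \PP(0\lr A,a_3\lr b)$.

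I would split both sides according to whether $0\lr\{a_1,a_2\}$ or not. Applying Theorem~\ref{thm:A=2} to the pair $\{a_1,a_2\}$ (which needs no separation) yields
\[
\PP(0\lr b,\,0\lr\{a_1,a_2\}) \;\ge\; \min_{i\in\{1,2\}}\PP(0\lr\{a_1,a_2\},\,a_i\lr b).
\]
For the complementary piece, separation forces any $0$-to-$b$ path avoiding $\{a_1,a_2\}$ to pass through $a_3$, so
\[
\PP(0\lr b,\,0\nlr\{a_1,a_2\}) \;=\; \PP(0\lr a_3,\,a_3\lr b,\,0\nlr\{a_1,a_2\}),
\]
which matches exactly the corresponding piece of $\PP(0\lr A,\,a_3\lr b)$ under the decomposition $\{0\lr A\}=\{0\lr\{a_1,a_2\}\}\sqcup\{0\lr a_3,\,0\nlr\{a_1,a_2\}\}$. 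The theorem thus reduces to the single inequality
\[
\min_{i\in\{1,2\}}\PP(0\lr\{a_1,a_2\},\,a_i\lr b) \;\ge\; \PP(0\lr\{a_1,a_2\},\,a_3\lr b).
\]

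To prove this reduced inequality I would exploit the minimality hypothesis for $\PP(0\lr A,a_j\lr b)$ at $j=3$. The difference $\PP(0\lr A,a_j\lr b)-\PP(0\lr\{a_1,a_2\},a_j\lr b)$ equals the ``cross term'' $\PP(0\lr a_3,\,0\nlr\{a_1,a_2\},\,a_j\lr b)$. Under separation, the conditioning event $\{0\lr a_3,\,0\nlr\{a_1,a_2\}\}$ couples cleanly with the cluster of $a_j$ on the $b$-side, and an application of BHK together with Lemma~\ref{lemma; compare} should give $\PP(0\lr a_3,\,0\nlr\{a_1,a_2\},\,a_3\lr b)\ge\PP(0\lr a_3,\,0\nlr\{a_1,a_2\},\,a_j\lr b)$ for $j=1,2$. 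Combined with the minimality of $a_3$ for the full events, this delivers the reduced inequality.

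The main obstacle is precisely this last step: the BHK manipulation must be carried out carefully so that the minimality of $a_3$ transfers from the full events $\{0\lr A,a_j\lr b\}$ to the restricted events $\{0\lr\{a_1,a_2\},a_j\lr b\}$. Without the separation hypothesis, the events $\{0\lr\{a_1,a_2\}\}$ and $\{a_j\lr b\}$ cannot be cleanly separated by cluster-based conditioning and BHK alone would not suffice to control the cross term; hence separation is what makes the argument close.
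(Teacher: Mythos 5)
Your reduction is correct up to its last step: the decomposition of both sides according to $\{0\lr\{a_1,a_2\}\}$ versus its complement is exact, the complementary pieces do match under the separation hypothesis, and Theorem \ref{thm:A=2} indeed gives $\PP(0\lr b,\,0\lr\{a_1,a_2\})\ge\min_{i=1,2}\PP(0\lr\{a_1,a_2\},a_i\lr b)$. The problem is that the ``reduced inequality'' on which everything then hinges,
\[
\min_{i\in\{1,2\}}\PP(0\lr\{a_1,a_2\},a_i\lr b)\;\ge\;\PP(0\lr\{a_1,a_2\},a_3\lr b),
\]
is false under your hypotheses, so no BHK or Lemma \ref{lemma; compare} manipulation can establish it (note also that Lemma \ref{lemma; compare} needs a hypothesis on the plain probabilities $\PP(a_j\lr b)$, not on the quantities $\PP(0\lr A,a_j\lr b)$ that your WLOG controls). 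A counterexample: let $G$ have vertices $0,a_1,a_2,a_3,b$ and edges $\{0,a_1\},\{0,a_2\},\{a_1,b\},\{a_2,b\}$ of probability $\tfrac12$, $\{0,a_3\}$ of probability $1$, and $\{a_3,b\}$ of probability $\tfrac1{10}$; then $A=\{a_1,a_2,a_3\}$ separates $0$ from $b$, and a direct computation gives $\PP(0\lr A,a_3\lr b)=0.49375$ while $\PP(0\lr A,a_1\lr b)=\PP(0\lr A,a_2\lr b)=0.58125$, so the minimum in \eqref{eq:preFKGA} is attained at $a_3$; yet
\[
\PP(0\lr\{a_1,a_2\},a_i\lr b)=0.46875\ (i=1,2)\;<\;0.4875=\PP(0\lr\{a_1,a_2\},a_3\lr b),
\]
and likewise your cross-term inequality fails: $\PP(0\lr a_3,0\nlr\{a_1,a_2\},a_3\lr b)=0.00625$ whereas $\PP(0\lr a_3,0\nlr\{a_1,a_2\},a_1\lr b)=0.1125$. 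The theorem itself holds here with equality ($\PP(0\lr b)=0.49375$), so your scheme loses exactly the deficit in the reduced inequality; even the sharper convex-combination form \eqref{eq:thm A=2} of Theorem \ref{thm:A=2} yields only $0.46875$ by the $a_1\leftrightarrow a_2$ symmetry. So this is a genuine gap, not merely a technical step left open.

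For comparison, the paper's proof does not attempt such a two-point reduction. It takes $a_3$ to minimise the plain probabilities $\PP(a_j\lr b)$, uses the separation to split $G$ into the $0$-side $B$ and the $b$-side $T$, decomposes all relevant probabilities according to the connection pattern among $a_1,a_2,a_3$ inside $B$, and controls the $b$-side quantities on the glued graph $T\Ground\{a_1,a_2\}$ via Lemma \ref{lem:piv comp} (itself a consequence of Theorem \ref{thm:A=2}), combined with Lemmas \ref{pitzutzimbainaim} and \ref{lemma; compare}. Some ingredient of this kind, which tracks how merging $a_1$ with $a_2$ changes connections on the $b$-side, seems unavoidable: as the example shows, conditioning on $\{0\lr a_3,\,0\nlr\{a_1,a_2\}\}$ can hurt $a_3\lr b$ much more than $a_1\lr b$, which is precisely what your last step would need to exclude.
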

\begin{proof}
  Let $G$ be a graph and $A=\{a_1,a_2,a_3\}$ be such that $A$ separates $0$ and $b$. Thus removing $A$ separates $G$ into (at least two) connected components, and $0$ and $b$ are in different components. We denote by $T$ the subgraph of $G$ containing the union of all connected components which do not contain 0, including the vertices in $A$ but not the edges between them. We denote by $B$ the subgraph of $G$ containing the component of $0$, the vertices of $A$ and the edges between them (in fact, components which contain neither 0 nor $b$ could have equally been assigned to $B$, the proof below would have worked the same).

For $1\leq j,k\leq 3$ we will also be interested in the graph $T^{(jk)}=T\Ground \{a_j,a_k\}$ which is obtained from $T$ by gluing the two points $a_j$ and $a_k$. For $X\subset A$ we denote
\[
q_{X}=\PP_T(A(X)\lr b);\qquad q_{X}^{(jk)}=\PP_{T^{jk}}(A(X)\lr b),
\]
where $A(X)$ is as defined in lemma \ref{pitzutzimbainaim}. Here and below we will use $\PP$ for probabilities on $G$, while probabilities on $T$ and $B$ will be denoted by $\PP_T$ and $\PP_B$, respectively.

We assume without loss of generality that $a_3$ minimises the probability of connection to $b$, i.e.\ $\PP(a_3\lr b)\leq \PP(a_j\lr b)$ for $j=1,2$. We will show that under this assumption \eqref{eq:preFKGA} holds for $a_3$, namely 
\[
\PP(0\lr b)\geq \PP(0\lr A, a_3\lr b).
\]
which will prove the theorem.

We start with the condition
\begin{equation}\label{conda3min}
0 \leq \PP(a_1\lr b) - \PP(a_3\lr b).
\end{equation}
We wish to break this inequality according to the connection patterns in $B$. For this purpose let $M$ be the event that $\{a_1\stackrel{B}{\nlr} a_2\}\cap\{a_1\stackrel{B}{\nlr} a_3\}\cap\{a_2\stackrel{B}{\nlr} a_3\}$.
With this notation \eqref{conda3min} can be rewritten as
\begin{multline}\label{conda3min-ii}
  0\leq \PP_B(a_1\lr a_2\nlr a_3)(q_{12}-q^{12}_3)\;+\\
  \PP_B(a_1\nlr a_2\lr a_3)(q_1^{23}-q_{23})+\PP_B(M)(q_1-q_3).
\end{multline}
(Of course, $\PP_B(M)=\PP(M)$). A similar relation holds also for $a_2$.

Next, we note that since $\PP(a_3\lr b)\le\PP(a_1\lr b)$ and $\{a_3\nlr a_2\}$ is a decreasing event in the cluster $a_3$, part (ii) of lemma \ref{lemma; compare} (with $\delta=0$) implies that
\begin{equation}\label{eq:a3nlra2}
\PP(a_3\lr b, a_3\nlr a_2)\leq \PP(a_1\lr b, a_3\nlr a_2)
\end{equation}
We now take \eqref{eq:a3nlra2} and break it according to the connection patterns in $B$. We get
\begin{equation}\label{cond2}
0\leq \PP_B(a_1\lr a_2\nlr a_3)(q_{12}-q^{(12)}_3)+\PP(M)(q_1-q_3).
\end{equation}
A similar relation holds also when $a_1$ and $a_2$ are swapped. With these two relations established we are in a position to apply lemma \ref{lem:piv comp}. We apply it to the graph $T$ and the points $a_1,a_2,a_3,b$. Let $j\in\{1,2\}$ be such that the minimum in \eqref{eq:piv2} is achieved at $j$. In our notations the conclusion of the lemma (as it appears in \eqref{eq:piv2}) thus becomes
\[
q_{12}-q_3^{(12)}\ge q_j-q_3.
\]
We conclude that
\begin{equation}\label{cond1}
q_{12}-q^{(12)}_3\geq 0.
\end{equation}
Indeed, if it were negative, so must be $q_j-q_3$ leading to a contradiction to \eqref{cond2} (if $j=1$) or to \eqref{cond2} with $a_1$ and $a_2$ swapped.

Let us now consider the inequality we wish to prove,
\[
0\leq \PP(0\lr b) - \PP(0\lr A, a_3\lr b).
\]
We decompose it according to the connections between 0 and the elements of $A$ in $B$ to get that it is enough to show
\[
\begin{aligned}
0&\leq \PP_B(0\lr a_1\lr a_2, A(1,2)\nlr a_3)(q_{12}-q^{(12)}_3)\\
&+ \PP_B(0\lr a_1, a_2\lr a_3, a_1\nlr A(2,3))(q^{(23)}_1-q_{23})\\
&+ \PP_B(0\lr a_2, a_1\lr a_3, a_2\nlr A(1,3))(q^{(13)}_2-q_{13})\\
&+ \PP_B(0\lr a_1, M)(q_1-q_3)\\
&+ \PP_B(0\lr a_2, M)(q_2-q_3)\\
&\eqqcolon \alpha+\beta_1+\beta_2+\gamma_1+\gamma_2\\
\end{aligned}
\]
Our first step is to estimate $\alpha$. For this we use lemma \ref{pitzutzimbainaim} twice on the graph $B$. We get
\begin{multline*}
\PP_B(0\lr a_1\lr a_2\,|\, a_1\lr a_2,A(1,2)\nlr a_3)\geq \PP_B(0\lr A(1,2)\,|\, A(1,2)\nlr a_3)\\
\geq \PP_B(0\lr A(1,2)\,|\, M)=\PP_B(0\lr a_1\,|\,M)+\PP_B(0\lr a_2\,|\,M)
\end{multline*}
where the first inequality follows by using clause i) of lemma \ref{pitzutzimbainaim} with $X=\{1,2\}$ and $Q=\{a_1\lr a_2\}$, while the second inequality follows by using clause ii) of lemma \ref{pitzutzimbainaim} with the same $X$ and with $Q=\{a_1\nlr a_2\}$.

Since $q_{12}-q^{(12)}_3>0$ (recall (\ref{cond1})), $\alpha+\gamma_1+\gamma_2$ can therefore be estimated by
\[
  \alpha+\gamma_1+\gamma_2\ge \PP_B(0\lr a_1\,|\,M)L_1 +\PP_B(0\lr a_2\,|\,M)L_2,
\]
where
\[
  \begin{aligned}
  &L_1\coloneqq\PP_B(a_1\lr a_2,A(1,2)\nlr a_3)(q_{12}-q^{(12)}_3)+\PP(M)(q_1-q_3),\\
    &L_2\coloneqq \PP_B(a_1\lr a_2,A(1,2)\nlr a_3)(q_{12}-q^{(12)}_3)+\PP(M)(q_2-q_3).
  \end{aligned}
\]
By (\ref{cond2}) both $L_i$ are positive. As for the multiplicands, we can apply Lemma \ref{pitzutzimbainaim} twice to get
\begin{align*}
\PP_B(0\lr a_1\,|\,M) &\ge \PP_B(0\lr a_1\,|\,a_1\nlr A(2,3))\\
&\ge \PP_B(0\lr a_1\,|\,a_2\lr a_3,a_1\nlr A(2,3))
\end{align*}
where the first inequality follows from clause i) of lemma \ref{pitzutzimbainaim} with $X=\{1\}$ and $Q=\{a_2\nlr a_3\}$ while the second inequality follows from clause ii) of lemma \ref{pitzutzimbainaim} with the same $X$ and with $Q=\{a_2\lr a_3\}$. A similar calculation for $\PP(0\lr a_2\,|\,M)$ gives finally
\begin{align*}
  \alpha+\gamma_1+\gamma_2\geq \;&\PP_B(0\lr a_1\,|\, a_2\lr a_3, a_1\nlr A(2,3))L_1\;+\\
  &\PP_B(0\lr a_2\,|\,a_1\lr a_3, a_2\nlr A(2,3))L_2 \eqqcolon \delta_1+\delta_2
\end{align*}
Condition (\ref{conda3min-ii}) now implies that $\beta_1+\delta_1\geq 0$ and similarly we get that $\beta_2+\delta_2\geq 0$. Thus,
\[
\alpha+\beta_1+\beta_2+\gamma_1+\gamma_2\geq \beta_1+\beta_2+\delta_1+\delta_2\geq 0,
\]
as needed.
\end{proof}

\subsection{$A$ close to 0}\label{sec:A close}

In this section we will show that the pre-FKG conjecture \eqref{eq:preFKGA} holds when in the graph $G\setminus A$ the vertex 0 is isolated, and several simple generalisations. To state the generalisations we will use the following definition (which will only be used in this section): 

\begin{defn*}Let $G$ be a graph, and let $A\subset G$ and $0,b\in G$ be such that $0$ and $b$ are in disjoint components of $G\setminus A$. We say that the quadruple $(G,A,0,b)$ is good if
  \[
  \PP(0\lr b)\ge \min_{a\in A}\PP(a\lr b)-\sum_{W\cap A=\emptyset}\PP(\Cl(0)=W)\min_{a\in A}\PP_{G\setminus W}(a\lr b).
  \]
\end{defn*}

A good graph satisfies the pre-FKG conjecture \eqref{eq:preFKG} because if we denote by $a_0$ the point of $A$ where the minimum of $\PP(a\lr b)$ is achieved then
\begin{align*}
\PP(0\lr b)&\ge \PP(a_0\lr b)-\sum_{W\cap A=\emptyset}\PP(\Cl(0)=W)\min_{a\in A}\PP_{G\setminus W}(a\lr b)\\
&\ge \PP(a_0\lr b)-\sum_{W\cap A=\emptyset}\PP(\Cl(0)=W)\PP_{G\setminus W}(a_0\lr b)\\
&= \PP(a_0\lr b)-\PP(a_0\lr b, 0\nlr A) = \PP(a_0\lr b, 0\lr A).
\end{align*}

Thus the result stated above follows from the following:
\begin{thm}\label{thm:diamond}Let $G$ be a graph and let $b\in G$ and $A\subset G$. If 0 is isolated in $G\setminus A$ then $(G,A,0,b)$ is good.
\end{thm}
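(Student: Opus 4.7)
The plan is to reduce, layer by layer, to a single conditional inequality that falls out of the comparison lemma \ref{lemma; compare}. Since every neighbour of $0$ lies in $A$, the cluster $\Cl(0)$ is disjoint from $A$ iff $\Cl(0)=\{0\}$, so the sum defining goodness collapses to a single term. Writing $H=G\setminus\{0\}$, $U\subseteq A$ for the set of neighbours of $0$, $\tilde p_a=\PP_H(a\lr b)$, $\hat p_a=\PP(a\lr b)$ and $q=\PP(\Cl(0)=\{0\})=\prod_{e\in E_0}(1-p(e))$, goodness becomes
\[
\PP(0\lr b)\geq\min_{a\in A}\hat p_a - q\min_{a\in A}\tilde p_a.
\]
Pick $a^{**}\in A$ minimising $\tilde p_a$. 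Since $\min_a\hat p_a\leq\hat p_{a^{**}}$, it suffices to prove $\PP(0\lr b)\geq\hat p_{a^{**}}-q\tilde p_{a^{**}}$.

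I would then write $\hat p_{a^{**}} = \tilde p_{a^{**}} + \PP(D)$ with $D=\{a^{**}\lr b\text{ in }G,\,a^{**}\nlr b\text{ in }H\}$, noting that $D\subseteq\{0\lr b\}$ (any $a^{**}$-to-$b$ path that uses $0$ forces $0\lr b$). Since the statuses of the edges $E_0$ at $0$ are independent of those of $H$, $(1-q)\tilde p_{a^{**}}=\PP(0\lr U)\,\tilde p_{a^{**}}=\PP(0\lr U,\,a^{**}\lr b\text{ in }H)$. A short bookkeeping (cancelling the common term $\PP(0\lr b,\,a^{**}\lr b\text{ in }H)$ from both sides) then shows the desired inequality to be equivalent to
\[
\alpha\coloneqq\PP(0\lr b,\,a^{**}\nlr b)\;\geq\;\PP(a^{**}\lr b\text{ in }H,\;0\lr U,\;0\nlr b)\eqqcolon\beta.
\]

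To prove $\alpha\geq\beta$, condition further on the random set $S\subseteq U$ of neighbours of $0$ connected to it by open edges. Since $0$ is isolated in $G\setminus A$ one has $\Cl(0)=\{0\}\cup\Cl_H(S)$, which lets one compute the conditional probabilities in closed form. Setting $M_S\coloneqq\{a^{**}\nlr a\text{ for every }a\in S\}$ and $\{b\lr S\}\coloneqq\bigcup_{a\in S}\{b\lr a\}$, one verifies that for each $S\neq\emptyset$ the contribution of $S$ to $\alpha$ equals $\PP_H(b\lr S,\,M_S)$ and the contribution to $\beta$ equals $\PP_H(a^{**}\lr b,\,M_S)$; for the latter one uses that under $\{a^{**}\lr b\}$ the events $\{b\nlr S\}$ and $M_S$ coincide. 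The empty $S$ contributes zero to both sides, so it suffices to prove, for every non-empty $S\subseteq U$,
\[
\PP_H(a^{**}\lr b,\,M_S)\;\leq\;\PP_H(b\lr S,\,M_S).
\]

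Fix any $a_0\in S$. The event $M_S$ is decreasing and depends only on $\Cl_H(a^{**})$ (whether $S$ meets that cluster is determined by the cluster itself), and $\tilde p_{a^{**}}\leq\tilde p_{a_0}$ by the choice of $a^{**}$, so the comparison lemma \ref{lemma; compare}(ii), applied in $H$ with $a_1=a^{**}$, $a_2=a_0$ and $\delta\downarrow 0$, gives $\PP_H(a^{**}\lr b,M_S)\leq\PP_H(a_0\lr b,M_S)$. Combined with the trivial inclusion $\{a_0\lr b\}\subseteq\{b\lr S\}$ (which holds because $a_0\in S$), this finishes the argument. The main obstacle I expect is the algebraic bookkeeping in the second paragraph, which requires careful tracking of which events live in $G$ and which in $H$; once that is sorted out, the per-$S$ inequality is an essentially one-line application of the comparison lemma together with the minimality of $\tilde p_{a^{**}}$.
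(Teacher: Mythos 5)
Your argument is correct, and it is in essence the paper's argument with a different finish. The decomposition over the set $S$ of open edges at $0$ is exactly the paper's summation over the events $\sigma_B$ of lemma \ref{lem:eps}, and the per-configuration inequality you isolate, $\PP_{H}(a^{**}\lr b,\,M_S)\le\PP_{H}(b\lr S,\,M_S)$ with $H=G\setminus\{0\}$, is precisely equivalent to the conclusion $\PP(a_0\lr b,\sigma_S)\le\PP(0\lr b,\sigma_S)$ that the paper extracts from lemma \ref{lem:eps}. Where you differ is in how this inequality is established: the paper builds an auxiliary graph in which the edges from $0$ to $B$ carry weight $\eps$, applies lemma \ref{lemma; compare}(i) with the increasing event ``all $0$--$B$ edges open'' (which lives in the cluster of $v$), and lets $\eps\to0$; you instead stay entirely in $G\setminus\{0\}$, do the algebra explicitly (the reduction to $\alpha\ge\beta$ and the per-$S$ identities, all of which check out, including the degenerate case $a^{**}\in S$ where $M_S=\emptyset$), and invoke lemma \ref{lemma; compare}(ii) with the decreasing event $M_S$ in the cluster of $a^{**}$, together with the minimality of $\tilde p_{a^{**}}$ and the trivial inclusion $\{a_0\lr b\}\subseteq\{b\lr S\}$. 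Your route avoids the $\eps$-perturbation and the auxiliary graph at the price of heavier bookkeeping; the paper's route packages the key step as the standalone lemma \ref{lem:eps}, which it then reuses verbatim in the proof of theorem \ref{thm:antenna}, so if you wanted to extend your version to that theorem you would need to re-extract a reusable statement of this kind.
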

We can now state the promised generalisation:
\begin{thm}\label{thm:antenna}Assume that for some $x\in G\setminus A$ we have that $(G\setminus\{0\},A,x,b)$ is good and that 0 is isolated in $G\setminus(A\cup \{x\})$. Then $(G,A,0,b)$ is good.
\end{thm}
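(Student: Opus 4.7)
Set $A_0=N(0)\cap A$, let $p_x=p(\{0,x\})$ if $\{0,x\}\in E(G)$ and $p_x=0$ otherwise, and write $q_x=1-p_x$ and $q_A=\prod_{a\in A_0}(1-p_a)$. Since $0$ is isolated in $G\setminus(A\cup\{x\})$, every neighbour of $0$ lies in $A_0\cup\{x\}$, and consequently the only sets $W$ appearing in the sum defining $\text{Corr}(G,A,0,b)$ (i.e.\ clusters of $0$ with $W\cap A=\emptyset$) are $W=\{0\}$ (no edge from $0$ open) and $W=\{0\}\cup\Cl_{G\setminus\{0\}}(x)$ when $\Cl_{G\setminus\{0\}}(x)\cap A=\emptyset$ (only $\{0,x\}$ open among the edges from $0$). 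Partitioning the sum accordingly and using independence of the edges at $0$ from $\omega|_{G\setminus\{0\}}$ yields the identity
\begin{equation*}
\text{Corr}(G,A,0,b)=q_A q_x\min_{a\in A}\PP_{G\setminus\{0\}}(a\lr b)+q_A p_x\,\text{Corr}(G\setminus\{0\},A,x,b).
\end{equation*}

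Let $G'$ denote $G$ with the edge $\{0,x\}$ removed (when present); in $G'$ the vertex $0$ has all its neighbours in $A$, so Theorem~\ref{thm:diamond} applied to $(G',A,0,b)$ (using $G'\setminus\{0\}=G\setminus\{0\}$) gives
\begin{equation*}
\PP_{G'}(0\lr b)\ge \min_a \PP_{G'}(a\lr b)-q_A\min_a\PP_{G\setminus\{0\}}(a\lr b).
\end{equation*}
Conditioning on the status of the edge $\{0,x\}$ in $G$ also yields the exact identity
\begin{equation*}
\PP_G(0\lr b)-\PP_{G'}(0\lr b)=p_x\,\EE_{G\setminus\{0\}}\Big[\mathbf{1}_{x\lr b}\prod_{a\in A_0\cap\Cl(x)}(1-p_a)\Big],
\end{equation*}
whose integrand is at least $q_A\mathbf{1}_{x\lr b}$. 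Applying the hypothesised goodness of $(G\setminus\{0\},A,x,b)$ to bound $\PP_{G\setminus\{0\}}(x\lr b)$ from below, then adding to the diamond bound and simplifying via the correction identity above, one obtains
\begin{equation*}
\PP_G(0\lr b)\ge \min_{a\in A}\PP_{G'}(a\lr b)-\text{Corr}(G,A,0,b).
\end{equation*}

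The main obstacle is closing the gap between this bound and the target, which uses $\min_a\PP_G(a\lr b)$ instead of $\min_a\PP_{G'}(a\lr b)$; since $\PP_G\ge\PP_{G'}$ the naive direction goes the wrong way. The remedy must come from the slack in the bound $\prod_{a\in A_0\cap\Cl(x)}(1-p_a)\ge q_A$, which is strict exactly when $A_0\not\subseteq\Cl_{G\setminus\{0\}}(x)$. Unpacking this slack produces a term of the form $p_x\PP(\{0,x\}\text{ open},\,x\lr b,\,\exists a'\in A_0\setminus\Cl(x):\{0,a'\}\text{ open})$; on the support of this event there is a path $a'\lr 0\lr x\lr b$ in $G$ that does not exist in $G'$, so this quantity is precisely a lower bound for $\PP_G(a'\lr b)-\PP_{G'}(a'\lr b)$. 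The technical heart of the proof is thus a BHK-style rearrangement showing that, for the appropriate choice of minimiser, this bridging probability exactly compensates $\min_a\PP_G(a\lr b)-\min_a\PP_{G'}(a\lr b)$, thereby closing the inequality and establishing goodness of $(G,A,0,b)$.
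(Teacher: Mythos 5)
Your preparatory steps are fine: the identity expressing the correction sum of $(G,A,0,b)$ in terms of $q_Aq_x$, $p_xq_A$ and the correction sum of $(G\setminus\{0\},A,x,b)$ is correct, as are the application of theorem \ref{thm:diamond} to $G'$ (the graph with the edge $\{0,x\}$ deleted) and the exact formula for $\PP_G(0\lr b)-\PP_{G'}(0\lr b)$. But the proof is not complete: the last step, where the slack in the bound $\prod_{a\in A_0\cap\Cl(x)}(1-p_a)\ge q_A$ is supposed to compensate $\min_a\PP_G(a\lr b)-\min_a\PP_{G'}(a\lr b)$, is only asserted, and in fact it is false in general, so the route cannot be closed as sketched. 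Take $G$ with vertices $\{0,a_1,a_2,x,b\}$, $A=\{a_1,a_2\}$, and edges $\{0,x\},\{0,a_2\},\{x,a_1\},\{a_2,b\}$, each of probability $\tfrac12$. All hypotheses of the theorem hold: $0$ is isolated in $G\setminus(A\cup\{x\})$, and $(G\setminus\{0\},A,x,b)$ is good because $\PP_{G\setminus\{0\}}(x\lr b)=0=\min_a\PP_{G\setminus\{0\}}(a\lr b)$. Here $\min_a\PP_{G'}(a\lr b)=\PP_{G'}(a_1\lr b)=0$ while $\min_a\PP_G(a\lr b)=\PP_G(a_1\lr b)=\tfrac1{16}$ (the path $a_1$-$x$-$0$-$a_2$-$b$), so the gap you must recover is $\tfrac1{16}$; but your slack vanishes identically, because it is supported on the event $\{x\lr b$ in $G\setminus\{0\}\}$, which has probability $0$ in this graph. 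Structurally, the increase of $\PP(a\lr b)$ caused by the edge $\{0,x\}$ can come entirely from configurations in which $x$ reaches $b$ only \emph{through} $0$ (the edge used on a path $a\lr x\lr 0\lr A_0\lr b$), and such configurations contribute nothing to your slack term. So no rearrangement of that slack can bridge from $\min_a\PP_{G'}(a\lr b)$ to $\min_a\PP_G(a\lr b)$; the detour through the edge-deleted graph $G'$ loses too much.

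The paper avoids this issue by never comparing minima in two different graphs. It fixes $a_0\in A$ minimising $\PP_{G\setminus\{0\}}(a\lr b)$ (note: the minimiser for the graph with $0$ removed, not for $G$), decomposes $\PP_G(0\lr b)$ according to the configuration $\sigma_B$ of the edges at $0$ (with $B\subseteq A\cup\{x\}$ the set of open ones), and uses lemma \ref{lem:eps} to get the comparison $\PP(0\lr b,\sigma_B)\ge\PP(a_0\lr b,\sigma_B)$ \emph{inside $G$} whenever $B\cap A\ne\emptyset$; only the single configuration $B=\{x\}$ is handled via the goodness of $(G\setminus\{0\},A,x,b)$, whose correction terms match exactly the terms $W\ne\{0\}$ in the correction sum for $(G,A,0,b)$. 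Summing over $B$ then yields $\PP_G(0\lr b)\ge\PP_G(a_0\lr b)-(\text{correction sum})\ge\min_{a\in A}\PP_G(a\lr b)-(\text{correction sum})$, so the leading term is a probability in $G$ from the start and the gap you are fighting never appears. If you want to salvage your argument, replace the step ``delete $\{0,x\}$ and apply theorem \ref{thm:diamond} to $G'$'' by a conditional comparison of this type on each event $\sigma_B$ with $B\cap A\ne\emptyset$.
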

Thus \eqref{eq:preFKG} holds, for example, in a graph where 0 is connected only to $A$ and to a certain $x$, which, in turn, is connected only to $A\cup 0$.

We start the proofs of both theorems with a simple corollary of the BHK inequality.
\begin{lem}\label{lem:eps}Let $G$ be a graph, 0 and $b$ some vertices, and $a$ and $v$ two neighbours of 0 with
  \[
  \PP_{G\setminus\{0\}}(a\lr b)\le \PP_{G\setminus\{0\}}(v\lr b).
  \]
  Let $B$ be a set of neighbours of $0$ such that $v\in B$ and let $\sigma\coloneqq\sigma_B$ be the event that all edges from 0 to vertices of $B$ are open and all other edges from 0 are closed. Then
  \[
  \PP(a\lr b,\sigma)\leq \PP(0\lr b,\sigma).
  \]
\end{lem}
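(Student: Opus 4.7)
The plan is to condition on $\sigma$ and reduce the inequality to a statement inside $G\setminus\{0\}$. Since $\sigma$ concerns only edges incident to $0$, it is independent of the percolation on $G\setminus\{0\}$, so $\PP(\cdot\cap\sigma)=\PP(\sigma)\PP_{G\setminus\{0\}}(\cdot)$ whenever the event $\cdot$ lives on $G\setminus\{0\}$.

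First I would rewrite the two events appearing in the inequality as events on $G\setminus\{0\}$. Under $\sigma$, the open edges leaving $0$ are exactly those into $B$, so $\Cl_G(0)=\{0\}\cup \Cl_{G\setminus\{0\}}(B)$ and therefore
\[
\{0\lr_G b\}\cap\sigma = \sigma\cap\{B\lr_{G\setminus\{0\}} b\}.
\]
For the other event, the cluster $\Cl_G(a)$ under $\sigma$ coincides with $\Cl_{G\setminus\{0\}}(a)$ when $a\nlr_{G\setminus\{0\}} B$ and otherwise equals $\Cl_{G\setminus\{0\}}(a)\cup\{0\}\cup \Cl_{G\setminus\{0\}}(B)$, giving
\[
\{a\lr_G b\}\cap\sigma = \sigma\cap\bigl(\{a\lr b\}\cup(\{a\lr B\}\cap\{B\lr b\})\bigr),
\]
with the three right-hand events all taken in $G\setminus\{0\}$. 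The main conceptual subtlety is here: $\{a\lr B\}\cap\{B\lr b\}$ does not imply $\{a\lr b\}$ in $G\setminus\{0\}$, because the two witnesses in $B$ may lie in different clusters of $G\setminus\{0\}$ and are glued only through the hub $0$ in $G$.

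Set $A=\{a\lr b\}$, $C=\{B\lr b\}$, $D=\{a\lr B\}$, all on $G\setminus\{0\}$. After dividing by $\PP(\sigma)$, the desired inequality becomes $\PP(A\cup(D\cap C))\le \PP(C)$. Two elementary inclusions, each an application of transitivity of $\lr$, simplify this. First, $A\cap C^c\subseteq D^c$: if $a\lr b$ and $B\nlr b$, no vertex of $B$ can lie in $\Cl(a)$, so $a\nlr B$. Second, $A\cap C\cap D^c=\emptyset$: if $a\lr b$ and some $v'\in B$ satisfies $v'\lr b$, then $a\lr v'$, hence $a\lr B$. Applying both, the inequality collapses to
\[
\PP_{G\setminus\{0\}}(a\lr b,\,a\nlr B)\le \PP_{G\setminus\{0\}}(B\lr b,\,a\nlr B).
\]

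To finish I would invoke lemma \ref{lemma; compare}(ii) on $G\setminus\{0\}$ with $a_1=a$, $a_2=v$, and the decreasing event $Q=\{a\nlr B\}$, which depends only on the cluster of $a$. The hypothesis is precisely $\PP_{G\setminus\{0\}}(a\lr b)\le\PP_{G\setminus\{0\}}(v\lr b)$, so the lemma (applied with arbitrary $\delta>0$ and then sent to $0$) yields $\PP_{G\setminus\{0\}}(a\lr b,\,a\nlr B)\le\PP_{G\setminus\{0\}}(v\lr b,\,a\nlr B)\le\PP_{G\setminus\{0\}}(B\lr b,\,a\nlr B)$, where the last step uses $v\in B$. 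The only real obstacle is the event-algebra bookkeeping in the first step; once the decomposition is in place, the rest is routine.
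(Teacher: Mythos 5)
Your proof is correct, but it takes a genuinely different route from the paper's. You factor out $\sigma$ by independence (it concerns only edges at $0$), decompose the two connection events through the hub $0$ into events on $G\setminus\{0\}$, and reduce the claim to $\PP_{G\setminus\{0\}}(a\lr b,\,a\nlr B)\le \PP_{G\setminus\{0\}}(B\lr b,\,a\nlr B)$, which follows from lemma \ref{lemma; compare}(ii) with the decreasing event $Q=\{a\nlr B\}$ (defined on the cluster of $a$), arbitrary $\delta>0$, and $v\in B$; your bookkeeping is sound, since $A\cap C\subseteq D$ and $A\cap C^c\subseteq D^c$ give $\PP(A\cup(D\cap C))\le\PP(A\cap D^c)+\PP(C\cap D)\le\PP(C\cap D^c)+\PP(C\cap D)=\PP(C)$, and the case $a\in B$ is absorbed automatically ($Q$ becomes empty). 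The paper instead introduces an auxiliary graph $H$ in which the edges from $0$ into $B$ get probability $\eps$ and all other edges at $0$ get probability $0$, applies lemma \ref{lemma; compare}(i) with the \emph{increasing} event that all $0$--$B$ edges are open, identifies conditioning on that event in $H$ with conditioning on $\sigma$ in $G$, and lets $\eps\to 0$. Both arguments ultimately rest on lemma \ref{lemma; compare} (hence on BHK), and both use a limiting device (your $\delta\to0$ versus the paper's $\eps\to0$); what yours buys is that it stays in the original graph and makes the combinatorial structure of connections through $0$ explicit, at the cost of the event-algebra step, while the paper's auxiliary-graph trick avoids that decomposition entirely.
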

\begin{proof} If $a\in B$ then the lemma is obvious (with an equality instead of an inequality) so assume this is not the case. Similarly, we can assume that $\PP_{G\setminus\{0\}}(v\lr a)<1$. There is nothing to prove if $\PP(\sigma)=0$, so assume also this is not the case.

  Next fix some $\eps>0$ and let $H$ be the auxiliary graph (with probabilities) which is identical to $G$ except for the probabilities of edges going out of 0. For these edges we set their probabilities as follows: if the other vertex is in $B$, we set their weight to $\eps$, and otherwise to 0.

  From the requirements of the lemma we now get
  \[
  \PP_H(a\lr b)\le \PP_H(v\lr b)+\eps|B|.
  \]
 Let $E$ be the event that all edges from 0 to vertices of $B$ are open. Note that $E$ is an increasing event in the cluster of $v$.
  Hence by Lemma \ref{lemma; compare}(i) we have
 \[
  \PP_H(a\lr b\,|\,E)\le \PP_H(v\lr b\,|\,E)+\eps|B|.
  \]
  Note that conditioning on $E$ in the graph $H$ is exactly like conditioning on $\sigma$ in $G$, and in particular the two terms of this sort do not depend on $\eps$. Hence we get
  \[
  \PP(a\lr b\,|\,\sigma)\le \PP(v\lr b\,|\,\sigma)+\eps|B|.
  \]
  We take $\eps\to 0$ and note that under $\sigma$ there is no difference between $v\lr b$ and $0\lr b$. Multiplying both sides by $\PP(\sigma)$ proves the lemma.
\end{proof}
\begin{proof}[Proof of theorem \ref{thm:diamond}]
  This is an immediate corollary from lemma \ref{lem:eps}. Indeed, in this case the sum over $W$ in the definition of a good graph contains only one non-zero term, $W=\{0\}$. Denote by $a_0$ the point of $A$ where $\PP_{G\setminus\{0\}}(a\lr b)$ is minimised, and for any $B\subseteq A$ with $B\ne \emptyset$ denote by $\sigma_B$ the event of lemma \ref{lem:eps}. We use lemma \ref{lem:eps} with an arbitrary $v\in B$ and get
  \[
  \PP(0\lr b,\sigma_B )\ge \PP(a_0\lr b,\sigma_B).
  \]
  Summing over all $B\ne \emptyset$ gives
  \begin{align*}
    \PP(0\lr b)&\ge \PP(a_0\lr b,0\lr A) = \PP(a_0\lr b)-\PP(\Cl(0)=\{0\})\PP_{G\setminus\{0\}}(a_0\lr b) \\
    &\ge \min_{a\in A}\PP(a\lr b)-\PP(\Cl(0)=\{0\})\PP_{G\setminus\{0\}}(a_0\lr b),
  \end{align*}
  as needed.
\end{proof}


\begin{proof}[Proof of theorem \ref{thm:antenna}]
  Let $a_0$ be the element of $A$ minimising $\PP_{G\setminus\{0\}}(a\lr b)$. Let $B\subseteq A\cup\{x\}$ with $B\ne \emptyset$ and $\sigma_B$ the event from lemma \ref{lem:eps}. If $B\ne\{x\}$ we can use lemma \ref{lem:eps} with an arbitrary $v\in B\cap A$ and get
  \begin{equation}\label{eq:lemeps}
  \PP(0\lr b,\sigma_B)\ge \PP(a_0\lr b,\sigma_B).
  \end{equation}
  We are left with the case $B=\{x\}$. But in this case we have
  \[
  \PP(0\lr b, \sigma_{\{x\}}) = \PP(\sigma_{\{x\}})\PP_{G\setminus\{0\}}(x\lr b).
  \]
  For the last term we use our assumption that $(G\setminus\{0\},A,x,b)$ is good to write
  \begin{multline*}
    \PP_{G\setminus\{0\}}(x\lr b)\ge \\
    \min_{a\in A}\PP_{G\setminus\{0\}}(a\lr b) -
  \sum_{W\cap A=\emptyset}\PP_{G\setminus\{0\}}(\Cl(x)=W)\min_{a\in A}\PP_{(G\setminus\{0\})\setminus W}(a\lr b).
  \end{multline*}
 Multiplying by $\PP(\sigma_{\{x\}})$ both terms on the right-hand side can be represented as probabilities in $G$ as follows:
  \begin{multline*}
    \PP_{G\setminus\{0\}}(x\lr b)\cdot \PP(\sigma_{\{x\}})\ge\\
    \min_{a\in A}\PP(a\lr b,\sigma_{\{x\}})-
  \sum_{\{0\}\ne W,W\cap A=\emptyset}\PP(\Cl(0)=W)\min_{a\in A}\PP_{G\setminus W}(a\lr b)
  \end{multline*}
  (note that we used the assumption that $A\cup\{x\}$ isolates 0 to claim that any $W\ne\{0\}$, $W\cap A=\emptyset$ for which $\PP(\Cl(0)=W)>0$ must contain $x$). Note that there is no big difference between conditioning on $\sigma_{\{x\}}$ and simply deleting 0. The only difference is the edge between $x$ and $0$, which does not matter for connections between $A$ and $b$. So the first term on the right-hand side of the last equation is simply $\PP(a_0\lr b,\sigma_{\{x\}})$.

  Adding the terms $\PP(0\lr b, \sigma_B)$ for the other $B$ and using \eqref{eq:lemeps} gives
  \begin{align*}
  \PP(0\lr b)&=\sum_{B\ne \emptyset}\PP(0\lr b,\sigma_B) \\
  &\ge \sum_{B\ne \emptyset}\PP(a_0\lr b,\sigma_B)-
  \mkern-24mu\sum_{\{0\}\ne W,W\cap A=\emptyset}\mkern-24mu\PP(\Cl(0)=W)\min_{a\in A}\PP_{G\setminus W}(a\lr b)\\
  &=\PP(a_0\lr b)-
  \sum_{W\cap A=\emptyset}\PP(\Cl(0)=W)\min_{a\in A}\PP_{G\setminus W}(a\lr b),
  \end{align*}
  as needed.
\end{proof}

\section{$\theta(p_c)$}\label{sec:theta}

In this section we show that conjecture \ref{conj:postFKG} implies that $\theta(p_c)=0$ in all dimensions. Since even the wise cannot see all ends, we will base our proof on a weaker version of conjecture \ref{conj:postFKG}, as follows.

\begin{conj}\label{conj:epsdel} For every $\eps>0$ there exists $\delta>0$ such that for any graph $G$, any $A\subset G$, and any $0,b\in G$ which satisfy $\PP(0\lr A)> 1-\delta$ and  $\PP(a\lr b)>1-\delta$ for all $a\in A$, we have $\PP(0\lr b)>1-\eps$.
\end{conj}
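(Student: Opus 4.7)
The conjecture is set up immediately as a weakening of Conjecture \ref{conj:postFKG}, so the natural plan is simply to read it off as a quantitative corollary. The first step is to apply Conjecture \ref{conj:postFKG} verbatim:
\[
\PP(0\lr b)\;\ge\;\PP(0\lr A)\cdot\min_{a\in A}\PP(a\lr b).
\]
Under the hypotheses of Conjecture \ref{conj:epsdel} both factors on the right exceed $1-\delta$ (the minimum is over the finite set $A$), so $\PP(0\lr b) > (1-\delta)^2$. Any $\delta$ with $(1-\delta)^2 \ge 1-\eps$ then works --- for example $\delta = \eps/2$, which gives $(1-\eps/2)^2 = 1-\eps+\eps^2/4 > 1-\eps$, or the sharp choice $\delta = 1-\sqrt{1-\eps}$.

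A second, equally short, route goes through Conjecture \ref{conj:preFKG}. Combining it with the FKG inequality on the two increasing events $\{0\lr A\}$ and $\{a\lr b\}$ yields
\[
\PP(0\lr b)\;\ge\;\min_{a\in A}\PP(0\lr A,\,a\lr b)\;\ge\;\min_{a\in A}\PP(0\lr A)\PP(a\lr b)\;>\;(1-\delta)^2,
\]
and the same choice of $\delta$ closes the argument. Either derivation is a one-line calculation, so strictly speaking there is no obstacle at all once a parent conjecture is granted.

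The genuine obstacle, of course, is not in this passage but in the parent conjectures themselves. If one hoped instead to establish Conjecture \ref{conj:epsdel} directly, bypassing them, the naive decomposition
\[
\PP(0\nlr b)\;\le\;\PP(0\nlr A)+\PP(0\lr A,\,0\nlr b)
\]
controls the first term by $\delta$ but leaves the residual $\PP(0\lr A,\,0\nlr b)$, and this residual is precisely the quantity that Conjectures \ref{conj:postFKG} and \ref{conj:preFKG} are tailored to bound. I do not see a route that avoids an inequality of that flavour. I would therefore present Conjecture \ref{conj:epsdel} simply as an immediate consequence of Conjecture \ref{conj:postFKG} with $\delta = \eps/2$, and reserve the effort for the downstream use in proving $\theta(p_c) = 0$.
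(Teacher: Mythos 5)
Your derivation is exactly the paper's: the authors dispose of this implication in one line, noting that Conjecture \ref{conj:epsdel} follows from Conjecture \ref{conj:postFKG} by taking $\delta=1-\sqrt{1-\eps}$, which is precisely your sharp choice (and your $\delta=\eps/2$ works equally well). Your remarks on the alternative route via Conjecture \ref{conj:preFKG} plus FKG and on the difficulty of a direct proof are sensible but not needed; the statement is, as you say, a deliberate weakening whose content lies entirely in the unproven parent conjecture.
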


Conjecture \ref{conj:epsdel} clearly follows from conjecture \ref{conj:postFKG} (by taking $\delta=1-\sqrt{1-\eps}$).

\begin{thm}\label{thm:conjtotheta}If conjecture \ref{conj:epsdel} holds then $\PP(|\Cl(0)|=\infty)=0$ at the critical probability for $\ZZ^d$ for any $d\ge 2$.
\end{thm}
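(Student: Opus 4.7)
The plan is to argue by contradiction along the one-step renormalisation route of Barsky-Grimmett-Newman and Grimmett-Marstrand, with conjecture \ref{conj:epsdel} replacing the sprinkling step. Suppose $\theta(p_c)>0$; the aim is to deduce $\theta(p')>0$ for some $p'<p_c$, contradicting the definition of $p_c$.

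First I would construct, for every $\delta>0$ and every sufficiently large $n$, a cylinder event $E_n$ on $B_n=[-n,n]^d$ with $\PP_{p_c}(E_n)>1-\delta$, as follows. Fix a ``centre region'' $O\subset B_n$ (a small cube about the origin) and, on each of the $2d$ faces $f$ of $B_n$, several disjoint ``anchor regions'' $A_{f,1},\dotsc,A_{f,k}$, each of diameter comparable to some intermediate scale $m$ with $1\ll m\ll n$. Let $E_n$ be the intersection, over all $f,i$, of the connection events $\{A_{f,i}\lr O\text{ inside }B_n\}$. By FKG, $\PP_{p_c}(E_n)\ge\prod_{f,i}\PP_{p_c}(A_{f,i}\lr O)$, and each factor should be pushed close to $1$ by choosing $m$ and $n$ large enough, using $\theta(p_c)>0$, uniqueness of the infinite cluster, and ergodicity. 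Since $E_n$ is cylinder, $\PP_p(E_n)$ is polynomial and hence continuous in $p$, so the estimate $\PP_{p'}(E_n)>1-2\delta$ persists for some $p'<p_c$.

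Now I would run the renormalisation. Tile $\ZZ^d$ by $B_x=x+B_n$ for $x\in(2n)\ZZ^d$, and call $x$ \emph{good} if $E_n$ holds on $B_x$; these are independent Bernoulli events of density $>1-2\delta$. For adjacent $x,y\in(2n)\ZZ^d$ with shared face $F$, form the auxiliary graph $G_{xy}$ from $B_x\cup B_y$ by contracting the centre region of $B_x$ to a single vertex $0$, the centre region of $B_y$ to a single vertex $b$, and each of the shared-face anchor regions $A_{F,1},\dotsc,A_{F,k}$ to single vertices $a_1,\dotsc,a_k$; set $A=\{a_1,\dotsc,a_k\}$. The good event on $B_x$ then forces $\PP_{G_{xy}}(0\lr a_i)>1-2\delta$ for each $i$, hence $\PP_{G_{xy}}(0\lr A)>1-2\delta$; the good event on $B_y$ similarly gives $\PP_{G_{xy}}(a_i\lr b)>1-2\delta$ for each $i$. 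Conjecture \ref{conj:epsdel} therefore yields $\PP_{G_{xy}}(0\lr b)>1-\eps(\delta)$, which translates back to: at $p'$, the centre regions of $B_x$ and $B_y$ are connected inside $B_x\cup B_y$ with probability $>1-\eps(\delta)$. Feeding the pair events ``$x,y$ both good and their centre regions connected'' into a Liggett-Schonmann-Stacey-style stochastic-domination comparison produces, for $\delta$ sufficiently small, a supercritical Bernoulli site percolation on $(2n)\ZZ^d$ whose infinite clusters lift to an infinite cluster in $\ZZ^d$ at $p'$, and hence $\theta(p')>0$, contradicting $p'<p_c$.

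The main obstacle is Step 1. Note that it is crucial to have $|A|>1$ in the conjecture (the case $|A|=1$ is just FKG and would be too weak), so one must work with several anchor regions. The conjecture then requires $\PP(a_i\lr b)>1-\delta$ for \emph{every} $a_i\in A$, and site-to-site connection probabilities are typically bounded away from $1$ at $p_c$ (being roughly $\theta(p_c)^2$ for far-apart sites); the contraction trick replacing single vertices by thick regions is what makes the hypothesis of the conjecture attainable. One then has to establish, for suitably chosen centre and anchor regions, that $\PP_{p_c}(A_{f,i}\lr O\text{ inside }B_n)\to 1$ as $n\to\infty$. This anchored-connectivity statement should follow from $\theta(p_c)>0$, uniqueness of the infinite cluster, and the ergodic theorem, but ensuring that the connections occur inside $B_n$ rather than via arbitrary paths in $\ZZ^d$ is the technically delicate point.
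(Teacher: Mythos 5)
Your outline is the classical one-step renormalisation (approach 1 of the introduction) with conjecture \ref{conj:epsdel} meant to replace sprinkling, but the step where you invoke the conjecture does not do what you need. You apply it to the auxiliary graph $G_{xy}$ obtained by contracting each \emph{deterministic} anchor region $A_{F,i}$ (and each centre region) to a single vertex. Contracting a region that is not known to be spanned by one open cluster changes connectivity: in $G_{xy}$ a path may enter $A_{F,i}$ at one vertex and leave from another without any open edges in between, so the conclusion $\PP_{G_{xy}}(0\lr b)>1-\eps$ gives only a pseudo-connection and does not translate back to a genuine connection of the centre regions inside $B_x\cup B_y$. This is precisely the gluing/uniqueness difficulty that the whole problem is about, and the contraction assumes it away; if instead you keep $A$ as a set of actual vertices, the hypothesis $\PP(a\lr b)>1-\delta$ for every $a\in A$ fails near criticality, as you yourself observe. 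The paper's proof avoids this by conditioning on the configuration $\xi$ in a shell and contracting \emph{open clusters} of $\xi$ (the graphs $K_\xi$ in lemma \ref{lem:vincentless}); there the set fed into the conjecture is the random collection $A_\xi$ of contracted clusters whose conditional probability of reaching the target exceeds $1-\delta$, and a separate argument (the scale selection and seed/plaquette construction of Steps II--III, together with Cerf's uniqueness lemma) shows $o\lr A_\xi$ with high probability before the conjecture is applied. A related problem recurs at the next stage of your scheme: even genuine pairwise connections $O_x\lr O_y$ and $O_y\lr O_z$ need not chain into a single cluster, since they may attach to different clusters inside $O_y$; the paper handles this by running an adaptive exploration process with conditional estimates (maintaining one growing cluster, extended via fully open seed plaquettes) rather than a block-independent stochastic-domination comparison.

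Second, your Step 1 is not a minor delicacy to be deferred. For anchor regions of scale $m\ll n$, the estimate $\PP_{p_c}(A_{f,i}\lr O\text{ inside }B_n)\to1$ does not follow from $\theta(p_c)>0$, uniqueness and ergodicity: the FKG/symmetry trick (lemma \ref{lem:quarterface}) only reaches targets of size comparable to a quarter face, and forcing connections to stay inside a box or tube towards a much smaller target is exactly what the paper needs conjecture \ref{conj:epsdel} for --- lemmas \ref{lem:start} and \ref{lem:shrink} are themselves proved by repeated applications of lemma \ref{lem:vincentless}. So the estimate you postpone is most of the actual work, and the tools you propose for it do not suffice.
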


The rest of this section is devoted to the proof of theorem \ref{thm:conjtotheta}. Since the case $d=2$ is well known \cite[\S 11]{Gri} we will assume throughout that $d\ge 3$ and is fixed.
Recall that $\theta(p)=\PP_p(|\Cl(0)|=\infty)$.

We start with two well-known facts.
\begin{lem}\label{lem:GM1}Assume $\theta(p)>0$. For every $r$ let $p_r$ be the probability that the box $[-r,r]^d$ intersects an infinite cluster. Then $\lim_{r\to\infty} p_r=1$.
\end{lem}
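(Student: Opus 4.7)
The plan is to reduce the statement to showing $\PP_p(\Omega_\infty)=1$, where $\Omega_\infty$ is the event that the configuration contains an infinite cluster. Once this is established, the claim follows immediately from continuity of measure: the events $E_r\coloneqq \{[-r,r]^d \text{ intersects an infinite cluster}\}$ are increasing in $r$ and satisfy $\bigcup_r E_r=\Omega_\infty$, so $p_r=\PP_p(E_r)\nearrow \PP_p(\Omega_\infty)=1$ as $r\to\infty$.

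To prove $\PP_p(\Omega_\infty)=1$, I would invoke a zero-one law. The event $\Omega_\infty$ is invariant under modification of any finite set of edges: adding finitely many edges only merges clusters and so cannot destroy an infinite one, while removing finitely many edges from an infinite cluster leaves at least one infinite component intact (the complement of finitely many vertices in an infinite connected subgraph of $\ZZ^d$ still contains an infinite connected piece). Consequently $\Omega_\infty$ is a tail event in the product $\sigma$-algebra on $\{0,1\}^{E(\ZZ^d)}$, and by Kolmogorov's zero-one law $\PP_p(\Omega_\infty)\in\{0,1\}$. Since $\PP_p(\Omega_\infty)\ge \PP_p(|\Cl(0)|=\infty)=\theta(p)>0$ by hypothesis, we conclude $\PP_p(\Omega_\infty)=1$.

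There is no real obstacle here; the argument is entirely standard and is essentially the first step in most renormalisation schemes (including the Barsky--Grimmett--Newman and Grimmett--Marstrand approaches discussed in the introduction). The only point worth verifying carefully is the invariance of $\Omega_\infty$ under finite-edge modifications, which, as noted above, is immediate from the local structure of $\ZZ^d$. One could alternatively invoke ergodicity of $\PP_p$ under $\ZZ^d$-translations in place of Kolmogorov's law; either route delivers the zero-one dichotomy needed.
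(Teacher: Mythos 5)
Your proposal is correct and follows exactly the paper's argument: Kolmogorov's zero--one law (together with $\theta(p)>0$) gives that an infinite cluster exists almost surely, and then continuity from below applied to the increasing events $E_r$ yields $p_r\to 1$. The extra verification that the existence of an infinite cluster is a tail event is a welcome elaboration of a step the paper leaves implicit, but it is not a different route.
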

\begin{proof}By Kolmogorov's 0-1 law the probability that there exists an infinite cluster is 1. Since the events that $[-r,r]^d$ intersects an infinite cluster increase to the event that an infinite cluster exists, the result follows from the property of `continuity from below' of the probability measure.
\end{proof}
\begin{lem} \label{lem:Cerf}For every $m\ge 1$ and every $\eps>0$ there  exists an $M$ with the following property. Let $E$ be the event that there exist two different clusters in $[-M,M]^d$ both of which intersect both $[-m,m]^d$ and the internal vertex boundary of $[-M,M]^d$, $\piv [-M,M]^d$. Then $\PP(E)<\eps$.
\end{lem}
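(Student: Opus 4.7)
The plan is to view this as an instance of uniqueness of the infinite cluster à la Burton--Keane. For each $M\ge m$ let $F_M$ denote the event in the statement, rephrased as: there exist $x_1,x_2\in[-m,m]^d$ that lie in distinct clusters of $\omega\cap[-M,M]^d$, each of which reaches $\piv[-M,M]^d$ inside $\omega\cap[-M,M]^d$. First I would verify that the sequence $(F_M)_{M\ge m}$ is \emph{monotone decreasing}: given a witness pair for $F_{M+1}$, the two disjoint paths to $\piv[-M-1,M+1]^d$ must in particular hit $\piv[-M,M]^d$, and the two clusters cannot fuse inside $[-M,M]^d$ since any such connection would already appear in the larger box. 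Hence $\PP(F_M)\searrow\PP(F_\infty)$ with $F_\infty=\bigcap_{M\ge m}F_M$.

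Next I would identify $F_\infty$ with the event that $[-m,m]^d$ meets two distinct infinite clusters in $\omega$. Because $[-m,m]^d$ is finite, on $F_\infty$ some fixed pair $(x_1,x_2)$ acts as witness for infinitely many $M$. The statement ``$x_i\lr\piv[-M,M]^d$ inside $[-M,M]^d$'' is equivalent to the same statement in $\omega$ (truncate any connecting path at its first crossing of $\piv[-M,M]^d$), so holding it for arbitrarily large $M$ forces $|\Cl(x_i)|=\infty$. Symmetrically, ``$x_1\nlr x_2$ inside $[-M,M]^d$'' becomes strictly more restrictive as $M$ grows, so holding it for arbitrarily large $M$ forces $x_1\nlr x_2$ in $\omega$, since a hypothetical connecting path would be contained in some finite box. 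Thus $F_\infty$ is contained in the event that two disjoint infinite clusters both intersect $[-m,m]^d$.

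To conclude, I would invoke the Burton--Keane uniqueness theorem: for Bernoulli percolation on $\ZZ^d$ the infinite cluster, if it exists, is almost surely unique; see \cite[\S 8.2]{Gri}. In particular the event that $[-m,m]^d$ meets two distinct infinite clusters has probability $0$, so $\PP(F_\infty)=0$ and continuity from above yields $\PP(F_M)\to 0$. Picking $M$ large enough gives $\PP(F_M)<\eps$, as required. The main point to execute carefully is the monotonicity $F_{M+1}\subseteq F_M$ and the subsequent pigeonhole/limit argument identifying $F_\infty$ with a ``two infinite clusters'' event; uniqueness of the infinite cluster is the only nontrivial external ingredient.
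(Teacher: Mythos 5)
Your argument is correct. Note, however, that the paper offers no argument of its own for this lemma: it simply cites Cerf's quantitative two-arms bound, remarking that since no quantitative relation between $M$ and $m$ is needed, the statement could equally be deduced from Aizenman--Kesten--Newman or Burton--Keane. What you have done is actually execute that second, soft deduction: the nesting $F_{M+1}\subseteq F_M$ checks out (a path from $x_i\in[-m,m]^d$ to $\piv[-M-1,M+1]^d$ must first cross $\piv[-M,M]^d$, and any connection between $x_1$ and $x_2$ inside the smaller box would persist in the larger one), the pigeonhole over the finitely many pairs in $[-m,m]^d$ legitimately places $F_\infty$ inside the event that two distinct infinite clusters meet $[-m,m]^d$, and uniqueness of the infinite cluster together with continuity from above for the decreasing events $F_M$ finishes the proof. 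The trade-off between the two routes: Cerf's theorem yields an explicit polynomial rate for $\PP(E)$ in terms of $M$, which the paper explicitly does not need, while your route is non-quantitative but self-contained modulo the classical uniqueness theorem, and it works for each fixed $p$ separately --- which suffices, since in the application (lemma \ref{lem:vincentless}) $p$ is fixed before $M$ is chosen.
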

\begin{proof}See \cite[theorem 1.2]{Cerf} (since we do not need a quantitative relation between $M$ and $m$, this could also be concluded from \cite{AKN} or \cite{BK}).
\end{proof}

We now define some geometric notions. For $A\subseteq G$ we denote by $\piv A$ and $\pev A$ the internal and external vertex boundaries of $A$ respectively, that is
\[
\begin{aligned}
\piv A&:=\{a\in A: \exists v\in A^c,\: a\sim v\}\\
\pev A&:=\{v\in A^c: \exists a\in A,\: a\sim v\}.
\end{aligned}
\]
A box in $\ZZ^d$ is a subgraph of the form
 \[
 B=v+\prod[-s_{i},s_{i}]
 \]
for some $v\in\ZZ^d$ and some integers $s_1,\dotsc,s_d\geq 0$. Note that we accept a box with some of the sidelengths equal to zero. For such a box $B$ and for $R>0$ we denote by $\expand{B}{R}$ the enlargement of $B$ by $R$, that is,
 \[
 \expand{B}{R}=v+\prod[-s_{i}-R,s_{i}+R].
 \]

\begin{defn*}Let $F\subset Q\subset \RR^d$. We say that the couple $(F,Q)$ is a \emph{hittable geometry} if for every $p$ for which $\theta(p)>0$ and for every $\eps>0$ there exists an $m$ such that for any integer $\ell>m$ we have
  \[
  \PP_p([-m,m]^d\xleftrightarrow{lQ}lF)>1-\eps.
  \]
  Here $lQ=\{v\in\ZZ^d:v/l\in Q\}$, and similarly for $lF$.
\end{defn*}
Let us immediately note the symmetries of the definition.
\begin{lem}\label{lem:sym} If $(F,Q)$ is hittable geometry then so is $(TF,TQ)$ for $T$ either a coordinate permutation or the map $(x_1, x_2\dotsc,x_d)\mapsto(-x_1, x_2\dotsc,x_d)$. We call the group of transformation generated by these $T$ \emph{the lattice symmetries}.
\end{lem}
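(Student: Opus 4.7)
The plan is to invoke the invariance of the percolation measure on $\ZZ^d$ under the lattice symmetries. Fix a lattice symmetry $T$ generated by coordinate permutations and sign flips in a single coordinate. Such a $T$ is a graph automorphism of $\ZZ^d$ that fixes the origin, so the pushforward $T_*\PP_p = \PP_p$ for every $p\in[0,1]$. Moreover $T[-m,m]^d = [-m,m]^d$ for every $m\ge 0$, since the cube is itself invariant under the full group of lattice symmetries. Finally, because $T$ is a linear map, $T(\ell Q) = \ell(TQ)$ and $T(\ell F) = \ell(TF)$ for every $\ell\in\NN$.

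Now suppose $(F,Q)$ is a hittable geometry and let us verify the condition for $(TF,TQ)$. Fix $p$ with $\theta(p)>0$ and $\eps>0$, and let $m$ be the value produced by hittability of $(F,Q)$, so that
\[
\PP_p\bigl([-m,m]^d \xleftrightarrow{\ell Q} \ell F\bigr) > 1-\eps
\]
for every integer $\ell>m$. Connectivity of two sets inside a third set is preserved under graph automorphisms, so for any configuration $\omega$ and any integer $\ell>m$,
\[
\bigl\{[-m,m]^d \xleftrightarrow{\ell Q} \ell F\bigr\}(\omega) = \bigl\{T[-m,m]^d \xleftrightarrow{T\ell Q} T\ell F\bigr\}(T\omega) = \bigl\{[-m,m]^d \xleftrightarrow{\ell TQ} \ell TF\bigr\}(T\omega).
\]
Taking expectations and using $T_*\PP_p = \PP_p$ yields
\[
\PP_p\bigl([-m,m]^d \xleftrightarrow{\ell TQ} \ell TF\bigr) = \PP_p\bigl([-m,m]^d \xleftrightarrow{\ell Q} \ell F\bigr) > 1-\eps,
\]
which is exactly the hittability condition for $(TF,TQ)$ with the same choice of $m$.

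The main obstacle, such as it is, is simply to check that the objects that appear in the definition of hittability — the cube $[-m,m]^d$, the edge set of $\ZZ^d$, and the measure $\PP_p$ — are all literally fixed (not merely fixed up to isomorphism) by the lattice symmetries, so that the invariance passes through the definition without any adjustment. Since the generators we are allowed to take are exactly the coordinate permutations and the single-coordinate sign flip, this is transparent, and the conclusion for an arbitrary element of the generated group follows by iterating the argument.
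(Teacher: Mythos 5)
Your argument is correct: the invariance of $\PP_p$ under lattice symmetries, the invariance of the cube $[-m,m]^d$, and the identity $T(\ell Q)=\ell(TQ)$ (which follows from linearity of $T$ and $\ell Q=\{v\in\ZZ^d:v/\ell\in Q\}$) give the claimed equality of probabilities with the same $m$. The paper omits the proof as ``straightforward,'' and what you wrote is exactly the intended symmetry argument, so there is nothing to compare beyond noting that you have simply supplied the omitted details.
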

The proof is straightforward and we omit it.

The basic example of a hittable geometry is given by the following well-known lemma.
\begin{lem}\label{lem:quarterface}Let $Q=[-1,1]^d$ and let $F$ be a quarter face, namely the set $\{1\}\times[0,1]^{d-1}$. Then $(F,Q)$ is a hittable geometry.
\end{lem}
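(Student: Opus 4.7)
The plan is to reduce a connection to the single quarter face $\ell F$ to a connection to the entire inner boundary $\piv(\ell Q)$, and then recover the former via FKG applied to the complementary decreasing events, using lattice symmetry. Let $N := d \cdot 2^d$ be the number of distinct images of $F$ under the lattice symmetries of Lemma~\ref{lem:sym}; each of the $2d$ faces of $Q$ decomposes into $2^{d-1}$ quadrant pieces, giving $N$ quarter faces whose union is all of $\piv Q$.

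First, Lemma~\ref{lem:GM1} lets me fix $m$ large enough that
\[
\PP_p\bigl([-m,m]^d \cap \Cl_\infty \ne \emptyset\bigr) > 1 - \eps^N,
\]
where $\Cl_\infty$ denotes the a.s.\ unique infinite cluster. For any integer $\ell > m$, on this event take a vertex $v \in [-m,m]^d \cap \Cl_\infty$ and an infinite open path from $v$; its truncation at the last vertex lying in $\ell Q$ is an open path inside $\ell Q$ from $[-m,m]^d$ to $\piv(\ell Q)$. Therefore
\[
\PP_p\bigl([-m,m]^d \xleftrightarrow{\ell Q} \piv(\ell Q)\bigr) > 1 - \eps^N.
\]

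Now write $\piv(\ell Q) = \bigcup_{T} \ell T(F)$ over the $N$ symmetries and set $A_T := \{[-m,m]^d \xleftrightarrow{\ell Q} \ell T(F)\}$. Each $A_T$ is an increasing event on the finite graph $\ell Q$, and by invariance of $\PP_p$, of $[-m,m]^d$, and of $\ell Q$ under the lattice symmetries, $\PP_p(A_T)$ does not depend on $T$. Applying FKG to the decreasing events $A_T^c$ gives
\[
\eps^N > 1 - \PP_p\bigl([-m,m]^d \xleftrightarrow{\ell Q} \piv(\ell Q)\bigr) = \PP_p\Bigl(\bigcap_T A_T^c\Bigr) \ge \prod_T \PP_p(A_T^c) = \bigl(1 - \PP_p(A_{\mathrm{id}})\bigr)^N,
\]
whence $\PP_p\bigl([-m,m]^d \xleftrightarrow{\ell Q} \ell F\bigr) = \PP_p(A_{\mathrm{id}}) > 1 - \eps$, which is what was needed.

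I do not expect a genuine obstacle for this case: Lemma~\ref{lem:Cerf} is not required here. The whole argument rests on Lemma~\ref{lem:GM1}, lattice symmetry, and FKG on complements. The $N$th power in the initial choice of $m$ is precisely what is needed to convert the easy high-probability connection to the whole boundary into the desired connection to a single quarter face; the same trick will presumably have to be repeated, on top of Lemma~\ref{lem:Cerf}, when constructing the genuinely new hittable geometries used later in the paper.
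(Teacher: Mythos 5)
Your proof is correct and follows essentially the same route as the paper's: choose $m$ via lemma \ref{lem:GM1} with error $\eps^{d2^d}$, observe that the resulting connection to $\piv(\ell Q)$ inside $\ell Q$ is the union of the $d2^d$ events of hitting the individual quarter faces, and conclude by lattice symmetry together with FKG applied to the complementary decreasing events. The only difference is cosmetic (you spell out the truncation of the infinite path and the count $N=d2^d$), so nothing further is needed.
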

Of course, a quarter face is really a quarter of a face only in dimension 3, but we find the name sufficiently suggestive to use it in all dimensions.
\begin{proof} Using lemma \ref{lem:GM1} choose some $m$ such that
\[
\PP([-m,m]^d\lr\infty)>1-\eps^{d2^d}.
\]
Let now $\ell>m$ be some integer. Let $E$ be the event that $[-m,m]^d\lr \piv [-l,l]^d$, so that $\PP(E)>1-\eps^{d2^d}$. Let $F_1,\dotsc,F_{d2^d}$ be all the possible quarter faces (i.e.\ all maps of $F$ under lattice symmetries), and let $H_i$ be the event $[-m,m]^d\xleftrightarrow{[-\ell,\ell]^d}\ell F_i$. Then the lattice symmetries show that $\PP(H_i)$ does not depend on $i$. Further, since $E=\bigcup H_i$ and using FKG for the complement events we get
\[
1-\PP(E)\ge (1-\PP(H_i))^{d2^d}.\qedhere
\]
\end{proof}

A set will be called a `target' if from any point of a box it can be hit using a hittable geometry (i.e.\ it contains the corresponding $F$). Here is the precise definition.
\begin{defn*}
Let $B\subset D\subset \ZZ^d$ be two boxes, $\Hl$ be a finite collection of hittable geometries and $R\ge 0$ such that $\expand{B}{R}\subset D$. We say that a $T\subset D$ is a target with respect to  $(B,D,R,\Hl)$ if for every $v\in \expand{B}{R}$ there exists an integer $l(v)\ge R$ and a hittable geometry $(F(v),Q(v))\in \Hl$ such that
\[
v+l(v)Q(v)\subseteq D\qquad v+l(v)F(v)\subseteq T.
\]
\end{defn*}
For a typical example of a target $T$ (in $d=2$, for clarity), consider the hittable geometries being $\Hl=\{([-1,1]^2,F):F\textrm{ half face}\}$, take $R=0$, $B=[-s,s]^2$ for some $s$, $D=[-4s,4s]^{2}$ and take $T$ to be $\{2s\}\times[-3s,3s]$. It is easy to check that this $T$ satisfies the geometric condition. Do note, though, that different $v$ use different half faces --- all $v$ use a half of the face $\{1\}\times[1,1]$ but upper $v$ need the lower half face and vice versa. (We will give formal proof of similar claims below, e.g.\ during the proofs of lemmas \ref{lem:start} and \ref{lem:shrink}).


The graphs we will actually be using below are not $\ZZ^d$, but they do have large parts isomorphic to boxes in $\ZZ^d$. Hence the following definition will be useful.

\begin{defn*}
Let $G$ be a graph and $D$ a subset of vertices of $G$. We say that
$D$ is a subbox if the induced graph on $D$ is isomorphic to $\prod[-s_{i},s_{i}]$,
considered as a subgraph of $\mathbb{Z}^{d}$, for some $s_{i}$.
Throughout this section we will not distinguish in the notation between
$D$ and $\prod[-s_{i},s_{i}]$ e.g.\ we could write $(0,\dotsc,0)$
and refer to the vertex of $G$ given by applying the isomorphism
to $(0,\dotsc,0)$. Further, we require from a subbox that if any
edge that connects some vertex of $G\setminus D$ to some vertex $v\in D$
must have that $v\in\partial_{\textrm{iv}}\prod[-s_{i},s_{i}]$ (to be more precise, $v$ is an image, under the isomorphism just explained, of the internal vertex boundary in $\ZZ^d$).
\end{defn*}

The following lemma is the main lemma of the proof, and justifies all definitions (e.g.\ why did we call a target a target? Because the lemma roughly says that `if you can reach the center of the box, you can continue to the target, losing a little in the probability').

\begin{lem}
\label{lem:vincentless}Assume conjecture \ref{conj:epsdel} and let $p\in(0,1)$
satisfy that $\theta(p)>0$. Then for every $\eps>0$ there exists a $\delta>0$ such that for every finite set $\Hl$ of hittable geometries there
exist an $R$ which satisfies the following. If $G$ is a graph with subboxes
\[
B\subseteq \expand{B}{R}\subseteq D\subset G,
\]
and $T\subseteq D$ is a target with respect to $(B,D,R,\Hl)$ then for every vertex $o\in G\setminus D$ we have
\[
\mathbb{P}(o\lr B)>1-\delta \implies
\mathbb{P}(o\leftrightarrow T)>1-\varepsilon.
\]

\end{lem}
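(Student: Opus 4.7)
The plan is to reduce the lemma to a single application of conjecture \ref{conj:epsdel} via an auxiliary-sink trick. I introduce a new vertex $b^{\star}$ adjacent to every vertex of $T$ by edges open with probability one; since $o\in G\setminus D$ and $T\subseteq D$ force $o\notin T$, the event $\{o\lr b^{\star}\}$ coincides with $\{o\lr T\}$. The conjecture will then be invoked in the format in which it is stated, with $0'=o$, $A=B$, $b'=b^{\star}$.

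Given $\eps$, I would first feed it into conjecture \ref{conj:epsdel} to obtain the $\delta$ of the lemma --- the one which ensures that the two premises $\PP(0'\lr A)>1-\delta$ and $\min_{a\in A}\PP(a\lr b')>1-\delta$ force $\PP(0'\lr b')>1-\eps$. Given the finite family $\Hl$, I would then take $R$ larger than all the thresholds $m(F,Q,\tau)$ supplied by the hittable geometry property for a small tolerance $\tau$ to be chosen (finiteness of $\Hl$ lets me take the maximum uniformly). With this $R$ the target property provides, for every $v\in\expand{B}{R}$, a hittable geometry $(F(v),Q(v))\in\Hl$ and a scale $l(v)\ge R$ with $v+l(v)Q(v)\subseteq D$ and $v+l(v)F(v)\subseteq T$, so translating the hittable geometry by $v$ delivers
\[
\PP\bigl([v-m,v+m]^d \xleftrightarrow{v+l(v)Q(v)} v+l(v)F(v)\bigr) > 1-\tau,
\]
whence $\PP([v-m,v+m]^d\lr T)>1-\tau$ within $D$.

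The outer application of the conjecture demands $\PP(v\lr T)>1-\delta$ for every $v\in B$, and the crux of the proof is to derive this vertex-to-set estimate from the box-to-set estimate above. My initial plan is to re-apply conjecture \ref{conj:epsdel} at the vertex scale with $0'=v$ (for which the first premise is trivial, since $v\in[v-m,v+m]^d$) and $A=[v-m,v+m]^d$, reducing the requirement to $\PP(u\lr T)>1-\delta'$ for every $u\in[v-m,v+m]^d$; each such $u$ still lies in $\expand{B}{R}$ when $R\ge m$, so the target property continues to supply hittable geometries and the argument could in principle be iterated. To terminate such a recursion at a depth independent of $R$, I would look to Cerf's uniqueness lemma \ref{lem:Cerf}: at a sufficiently large enclosing scale only one cluster connects the central box to its boundary, which would let the set-level statement ``some vertex of $[v-m,v+m]^d$ is in the cluster reaching $T$'' be promoted, via a further conjecture application, to the vertex-level ``$v$ is in that cluster.''

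The main obstacle is precisely this gap between the high-probability box-to-$T$ connection and the vertex-to-$T$ connection the outer conjecture requires: since in isolation one has only $\PP(v\lr T)\le\theta(p)$, and $\theta(p)$ need not be close to $1$, the conjecture cannot be applied naively at the vertex scale. I expect the heart of the proof to be a careful combination of the recursion sketched above with Cerf's uniqueness, designed so that the compounded probability losses remain below $\eps$ while $\delta$ depends only on $\eps$ (and not on $\Hl$ or $R$). This delicate interplay --- particularly the termination of the recursion at a depth independent of the geometric scale $R$ --- is where the real difficulty of the proof should lie.
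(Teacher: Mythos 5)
Your reduction via a sink $b^{\star}$ attached to $T$, with the conjecture applied to $0'=o$, $A=B$, $b'=b^{\star}$, cannot be made to work as stated, and you have correctly put your finger on why: the second premise of conjecture \ref{conj:epsdel} requires $\PP(a\lr T)>1-\delta$ for \emph{every} vertex $a\in B$, but a single vertex is isolated with probability at least $(1-p)^{2d}>0$, so $\min_{a\in B}\PP(a\lr T)$ is bounded away from $1$ uniformly, no matter how large $R$ is. The recursion you sketch (apply the conjecture again with $0'=v$, $A=[v-m,v+m]^d$) inherits exactly the same defect at every scale --- the innermost premise is always a vertex-to-$T$ probability --- so it cannot terminate, and the appeal to Cerf's lemma as a way to ``promote'' a box-level statement to a vertex-level one is left entirely unspecified. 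What you have written is an accurate diagnosis of the difficulty together with a correct choice of which external inputs should matter (the hittable-geometry/target property, Cerf's uniqueness, and a single quantitative use of the conjecture with $\delta$ depending only on $\eps$), but not a proof.

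The missing idea in the paper is that the set to which the conjecture is applied is not $B$, nor any deterministic set of vertices of $G$, but a \emph{conditional} set of contracted clusters. One conditions on the configuration $\xi$ of the edges in an annular shell $S$ around $B$ and passes to the quotient graph $K_\xi$ in which open clusters of $S$ become single vertices; the set $A_\xi$ consists of those contracted clusters $w$ with $\PP_{K_\xi}(w\lr T)>1-\delta$. This is where your obstruction disappears: by Cerf's lemma the box $v+[-M,M]^d$ has, with probability $1-O(\delta^2)$, a \emph{unique} crossing cluster, and by the target property plus the hittable geometries that cluster reaches $T$; hence after conditioning on $\xi$ the corresponding vertex of $K_\xi$ really does connect to $T$ with conditional probability $>1-\delta$, even though no individual vertex of $G$ does. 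A second, separate argument is then needed to verify the \emph{first} premise, $\PP_{K_\xi}(o\lr A_\xi)$ close to $1$: the hypothesis $\PP(o\lr B)>1-\delta$ is upgraded (by a scale-selection argument over $j\in[5M,R]$, which is the real reason $R$ must be large) to the statement that the cluster of $o$ hits $\pev\expand{B}{j}$ in many plaquettes, so that with high probability it contains a fully open ``seed'' plaquette glued to the shell, and that seed attaches $o$ to a member of $A_\xi$. Finally the conjecture is applied once in each good $K_\xi$ (with $T$ contracted to the vertex $b$) and the results are averaged over $\xi$. None of these three mechanisms --- the conditioning/contraction that replaces vertices by clusters, the uniqueness step that makes those clusters conditionally good, and the seed construction that connects $o$ to them --- is present in your proposal, and they are precisely what bridges the gap you identified.
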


\begin{proof}[Proof, Step I] We first fix the parameters which will be used throughout the proof. The lemma is obvious for $\eps> 1$ so assume $\eps \le 1$. We use conjecture \ref{conj:epsdel} with $\eps_{\textrm{conjecture \ref{conj:epsdel}}}=\frac12\eps$. Our $\delta$ is related to the output of the conjecture by $\delta=\frac1{12}\eps \delta_{\textrm{conjecture \ref{conj:epsdel}}}$.

  With $\delta$ chosen and $\Hl$ given we can fix some geometric parameters. Let $\Hl'=\Hl\cup\{(\{1\}\times[-1,1]^{d-1},[-1,1]^d)\}$ i.e.\ a full face of $[-1,1]^d$ (this is a hittable geometry due to lemma \ref{lem:quarterface}). Using the definition of a hittable geometry choose $m\in\NN$ sufficiently large such that for all $\ell>m$ and all $(F,Q)\in\Hl'$,
\begin{equation}\label{eq:def m}
\mathbb{P}([-m,m]^{d}\xleftrightarrow{\ell Q}\ell F)>1-\tfrac{1}{d}\delta^2.
\end{equation}
Using lemma \ref{lem:Cerf} pick $M$ sufficiently large such that the probability that there
are two disjoint clusters from $[-m,m]^{d}$ to $\piv[-M,M]^{d}$
is smaller than $\delta^{2}$. 
Let $k$ be sufficiently large so that $(1-p^{(d+1)(4M)^{d-1}})^{k}<\delta$.
Let $k_{2}$ be sufficiently large such that $(1-(1-p)^{k(4M)^{d-1}})^{k_{2}}<\frac{1}{2}\delta$.
Let $R=\lceil\frac{2}{\delta}k_{2}\rceil+5M$.\\

\textit{Step II}. Let $B\subseteq \expand{B}{R}\subseteq D$ be given and assume that $o\in G\setminus D$. 
%
For every
$j\in[5M,R]$ let $E_{j}$ be the event that
\[
\Big|\mathcal{C}\Big(o;G\setminus \expand{B}{j}\Big)\cap\partial_{\textrm{ev}}\expand{B}{j}\Big|\ge k(4M)^{d-1}.
\]
We claim that (under the assumption $\PP(o\leftrightarrow B)>1-\delta$ from the statement of the lemma) there exists some $j$ which satisfies
$\mathbb{P}(E_{j})>1-2\delta$.

To see this assume that the contrary
is true, i.e.\ $\mathbb{P}(E_{j})\le 1-2\delta$ for
all $j$. Consider the variable
\[
X:=\sum_{j=5M}^{R}\mathbbm{1}\Big\{\Big\{ o\leftrightarrow B\Big\}\setminus E_{j}\Big\}.
\]
On the one hand our assumption that $\PP(o\lr B)>1-\delta$ combined with  $\mathbb{P}(E_{j})\le 1-2\delta$ for all $j$
implies that $\mathbb{E}(X)\ge\delta(R-5M)$. On the other hand we claim
that
\begin{equation}
\mathbb{P}(X>k_{2})<\tfrac{1}{2}\delta.\label{eq:X>k2}
\end{equation}
Essentially, \eqref{eq:X>k2} follows because every time $E_j$ did not occur we have probability at least $(1-p)^{k(4M)^{d-1}}$ to disconnect from $\expand{B}{j}$. To make this a little more formal, define $j_0=R+1$ and for any $i>0$,
\[
j_i\coloneqq \max\{\max\{j<j_{i-1}:\neg E_j\},5M-1\}.
\]
and let $G_i$ be the events
\[
G_i\coloneqq \{j_i> 5M-1\}\cap \{0\leftrightarrow \expand{B}{j_i}\}.
\]
We claim that
\begin{equation}\label{eq:Gicond}
\PP(G_i\,|\,G_1,G_2,\dotsc,G_{i-1})\le 1-(1-p)^{k(4M)^{d-1}}.
\end{equation}
Indeed, if $j_{i}=5M-1$ then this probability is simply zero, while in the other case, because $E_{j_{i}}$ did not happen we have that the cluster of $o$ contains at most $4(kM)^{d-1}$ vertices in $\pev \expand{B}{j_i}$. Each such vertex has exactly one edge connecting it to $\expand{B}{j_i}$ and hence the probability that they are all closed is at least $(1-p)^{k(4M)^{d-1}}$. This shows \eqref{eq:Gicond}. We conclude that
\[
\PP(G_1\cap\dotsb\cap G_{k_2})\le \big(1-(1-p)^{k(4M)^{d-1}}\big)^{k_2}<\tfrac12\delta
\]
by the definition of $k_2$. But $X>k_2$ implies $G_1\cap\dotsb\cap G_{k_2}$. Thus \eqref{eq:X>k2} follows.

 We conclude
that
\[
\mathbb{E}(X)<k_{2}+\tfrac{1}{2}\delta(R-5M)\le\delta(R-5M)
\]
by the definition of $R$. This contradicts our assumption and hence we get
that for some $j$, $\mathbb{P}(E_{j})>1-2\delta$.
Fix one such $j$ until the end of the proof.\\

\textit{Step III}.
For the value of $j$ defined at the end of Step II, write
$\partial_{\textrm{iv}}\expandp{B}{j}$ as a disjoint union of \emph{plaquettes}. We define a plaquette as a subset of $\piv\expandp{B}{j}$ of the form
\[
w+[0,l_1-1]\times\dotsb\times\{0\}\times\dotsb\times[0,l_d-1],
\]
for some $w\in\ZZ^d$, some position for the $\{0\}$ and some $l_i$ (all of which might depend on the plaquette). We require $l_{i}\in[2M+2,4M-1]$. For example,
\[
\{s_1+j\}\times[s_2+j-2M-1,s_2+j]\times\dotsb\times[s_d+j-2M-1,s_d+j]
\]
is a legal plaquette.
It is easy to check that $\piv\expandp{B}{j}$ can be written as a disjoint union of plaquettes (note that we use here that $j\ge 5M$, and that $M$ is sufficiently large) and we will not show it in detail. Let $\Pl$ be such a set of plaquettes.

Call a given plaquette $P\in\Pl$ a seed if all edges between any two of its
vertices are open. Further, require that all edges from $P$
into $\expand{B}{j-1}$ be open. Let $\mathcal{G}$ be
the event that $E_{j}$ occurred and in addition $\mathcal{C}(o;G\setminus \expand{B}{j-1})$
contains at least one seed. We now claim that

\begin{equation}
\mathbb{P}(\Gl)>1-3\delta.\label{eq:G lem 1}
\end{equation}
This is because if $E(j)$ occurred then $\mathcal{C}(o;G\setminus \expand{B}{j})$
contains at least $k(4M)^{d-1}$ points in $\partial_{\textrm{ev}} \expandp{B}{j}$
and hence faces at least $k$ different plaquettes in $\Pl$. 
Denote these plaquettes by $P_{1},\dotsc,P_{k}$.
For each plaquette $P_i$ the probability that it is a seed and connects
to $o$ in $G\setminus \expandp{B}{j-1}$ is at least $p^{(d+1)(4M)^{d-1}}$ as there are no
more than $(d-1)(4M)^{d-1}$ internal edges to the plaquette, no more
than $(4M)^{d-1}$ connecting the plaquette to $\expand{B}{j-1}$,
and one more edge is required to connect the plaquette to $o$. This estimate holds also after conditioning on $\Cl(o;G\setminus\expandp{B}{j})$. For any two plaquettes
the events that they are seeds are independent, again also after conditioning on
$\mathcal{C}(o;G\setminus \expandp{B}{j})$. We get
\[
\mathbb{P}(E_{j}\setminus\mathcal{G})\le\big(1-p^{(d+1)(4M)^{d-1}}\big)^{k}<\delta,
\]
where the second inequality is by the definition of $k$. Together
with the definition of $j$ we get (\ref{eq:G lem 1}).

Order the plaquettes in $\Pl$ by some arbitrary deterministic order, and for
each $P$ let $\mathcal{F}_{P}$ be the event that $E_j$ occurred, $P$ is a seed in the cluster of $0$,
and further, $P$ is the first such seed in this order. These events are disjoint,
so
\begin{equation}
\sum_{P}\mathbb{P}(\mathcal{F}_{P})=\mathbb{P}(\mathcal{G})>1-3\delta.\label{eq:sumPFP}
\end{equation}

\textit{Step IV}. Assume that $T\subseteq D$ is a target with respect to $(B,D,R,\Hl)$.
Let
\[
S=\expandp{B}{j-1}\setminus \expandp{B}{j-2-2M}.
\]
Examine a $v\in S$ such that $v+[-M,M]^d\subseteq S$. Since $S$ has
thickness $2M$, this means that $v+[-M,M]^d$ has at least one face
facing `outside' i.e.\ contained in $\partial_{\textrm{iv}}\expandp{B}{j-1}$.
Let $\mathcal{U}(v)$ denote the set of such faces.

Let $\mathcal{G}(v)$ be the event that there exists a cluster
$\mathcal{C}$ with the following properties
\begin{enumerate}
\item $\Cl$ intersects both $v+[-m,m]^{d}$ and $v+\piv[-M,M]^{d}$ and is unique with respect to this property.
\item For all $U\in\mathcal{U}(v)$ we have that $\Cl\cap (v+[-M,M]^d)$ contains a path from $v+[-m,m]^d$ to $U$.
\item $\mathcal{C}\cap T\ne\emptyset$.
\end{enumerate}
We claim that
\begin{equation}\label{eq;v very connected}
\mathbb{P}(\mathcal{G}(v))>1-3\delta^{2}.
\end{equation}
Indeed, the fact that $T$ is target with respect to $(B,D,R,\Hl)$ (and the fact that $v\in \expand{B}{R}$) implies that there exists $\ell(v)>R$ and a hittable geometry $(F(v),Q(v))\in\Hl$ such that $v+\ell(v) F(v)\subseteq T$ and $v+\ell(v) Q(v)\subseteq D$. The definition of $m$ in \eqref{eq:def m} now gives that
\begin{equation}
  \mathbb{P}(v+[-m,m]^{d}\leftrightarrow T)
  \ge\mathbb{P}(v+[-m,m]^{d}\leftrightarrow v+\ell(v)F(v))
  >1-\tfrac{1}{d}\delta^{2}.\label{eq:vtoF}
\end{equation}
Further, as $m$ was chosen using not only the hittable geometries $\Hl$, but also that of the full face, we may use lattice symmetries to get 
\[
\mathbb{P}(v+[-m,m]^{d}\xleftrightarrow{v+[-M,M]^d} U)>1-\tfrac{1}{d}\delta^{2},
\]
for $U\in\mathcal{U}(v)$. Summing over all $U\in\mathcal{U}(v)$ gives
\begin{equation}
\mathbb{P}(v+[-m,m]^{d}\xleftrightarrow{v+[-M,M]^d} U\;\forall U\in\mathcal{U}(v))>1-\delta^{2}.\label{eq:vtoU}
\end{equation}
The inequality (\ref{eq;v very connected}) now follows from (\ref{eq:vtoF}), (\ref{eq:vtoU}) and the definition of $M$.

For $\xi\subseteq E(S)$ where $E(S)$ is the set of all edges which
have both ends in $S$, consider the event that all edges in $\xi$ are open and all other edges in $E(S)$ are closed. For brevity we denote this event by $\xi$ and let $p_{\xi}$ denote the
probability that it occurred,
that is $p_{\xi}=p^{|\xi|}(1-p)^{|E(S)\setminus\xi|}$). With this notation we can write  (\ref{eq;v very connected}) as
\[
\sum_{\xi\subseteq E(S)}p_{\xi}\mathbb{P}(\mathcal{G}(v)\,|\,\xi)>1-3\delta^{2}.
\]
This implies
\[
\sum_{\xi\subseteq E(S)}p_{\xi}\mathbbm{1}\{\mathbb{P}(\mathcal{G}(v)\,|\,\xi)>1-\delta\}>1-3\delta.
\]
Denote by $\tilde{A}_{\xi}$ the collection of all clusters $\mathcal{C}$ in $E(S)$ which satisfy $\mathbb{P}(\mathcal{C}\lr T\,|\,\xi)>1-\delta$. Applying the uniqueness condition in $\mathcal{G}(v)$ we find that the last inequality implies
\begin{equation}\label{eq: normaly in A before translation}
\sum_{\xi\subseteq E(S)}p_{\xi}\mathbbm{1}\{\exists \mathcal{C}\in \tilde{A}_{\xi},\mathcal{C}\cap U\ne\emptyset\;\forall U\in\mathcal{U}(v)\}>1-3\delta.
\end{equation}

\textit{Step V}. For a given $\xi\subseteq E(S)$ define an auxiliary graph $K_{\xi}$ by
taking the graph $G$, removing every edge in $E(S)\setminus\xi$,
and shortening every edge in $\xi$ (i.e.\ the new vertices are
equivalence classes of vertices in $S$, where two vertices $x$ and
$y$ are considered equivalent if there is a path $\gamma=\{x=x_{1},x_{2},\dotsc,x_{\ell}=y\}$
with $(x_{i},x_{i+1})\in\xi$ for all $i=1,\dotsc,\ell-1$). Vertices
and edges outside $S$ are unaffected. For brevity, we identify any subset of $S$ with the
subset of the vertices of $K_{\xi}$ which are the equivalence classes
of all elements of it. Let $A_{\xi}\subseteq K_{\xi}$
be the set of all vertices $w\in S$ which satisfy
\[
\mathbb{P}_{K_{\xi}}(w\leftrightarrow T)>1-\delta,
\]
where $\mathbb{P}_{K_{\xi}}$ denotes percolation on $K_{\xi}$ induced by this process (i.e.\ $A_{\xi}$ are the equivalence classes obtained from the set $\tilde{A}_{\xi}$, defined at the end of Step IV). For clarity, from now on we will also denote probabilities
in $G$ by $\mathbb{P}_{G}$. Let
\[
\phi_{\xi}:=\mathbb{P}_{K_{\xi}}(o\leftrightarrow A_{\xi}).
\]
Our plan is to estimate $\phi_\xi$ and then use conjecture \ref{conj:epsdel}. 

To start, we wish to translate (\ref{eq: normaly in A before translation}) to the language of $K_\xi$. We get
\begin{equation}
\sum_{\xi}p_{\xi}\mathbbm{1}\Big\{\exists w\in A_{\xi}\cap\bigcap_{U\in\mathcal{U}(v)}U\Big\}>1-3\delta\label{eq:normally in A}
\end{equation}
(here $U$ is a subset of vertices of $K_\xi$ and $\Ul$ is a family of such subsets).
Next, we recall the plaquettes and seeds defined in Step III as well as the events $\mathcal{F}_{P}$. An obvious, but crucial, point is that these events
can be defined in each $K_{\xi}$ and their probabilities do not depend
on $\xi$ i.e.\ $\mathbb{P}_{K_{\xi}}(\mathcal{F}_{P})=\mathbb{P}_{G}(\mathcal{F}_{P})$. For a plaquette $P$ let $v(P)\in S$ be a point such that some
$U\in\mathcal{U}(v(P))$
neighbours $P$ (such a point always exists, even for a plaquette touching the $(d-2)$-skeleton of the cube, because a plaquette of side length $2M+2$ still has $2M+1$ of its length facing $S$). Denote the $U\in\mathcal{U}(v(P))$ just chosen by
$U(P)$ for short. Then
\begin{equation}
  \mathbb{P}_{K_{\xi}}(o\leftrightarrow A_{\xi})
  \ge\sum_{P\in\Pl}\mathbb{P}_{K_{\xi}}(\mathcal{F}_{P})\mathbbm{1}\{\exists w\in A_{\xi}\cap U(P)\}\label{eq:but crucial}
\end{equation}
since if for some $P$ the event $\mathcal{F}_{P}$ occurred and such
a $w$ exists then $P$ connects to $w$ (as the event that $P$
is a seed implies that all edges from $P$ to $U(P)$ are open) and
hence $o$ connects to $A_{\xi}$. We conclude
\begin{align*}
\sum_{\xi}p_{\xi}\phi_{\xi} & =\sum_{\xi}p_{\xi}\mathbb{P}_{K_{\xi}}(o\leftrightarrow A_{\xi})\\
 & \stackrel{\mathclap{\textrm{(\ref{eq:but crucial})}}}{\ge}\sum_{\xi}p_{\xi}\sum_{P\in\Pl}\mathbb{P}_{K_{\xi}}(\mathcal{F}_{P})\mathbbm{1}\{\exists w\in A_{\xi}\cap U(P)\}\\
 & =\sum_{P\in\Pl}\mathbb{P}_{G}(\mathcal{F}_{P})\sum_{\xi}p_{\xi}\mathbbm{1}\{\exists w\in A_{\xi}\cap U(P)\}\\
& \stackrel{\mathclap{\textrm{(\ref{eq:normally in A})}}}{\ge}
(1-3\delta)\sum_{P\in\Pl}\mathbb{P}_{G}(\mathcal{F}_{P})
\stackrel{\textrm{(\ref{eq:sumPFP})}}{>}(1-3\delta)^2>1-6\delta.
\end{align*}
Let $\Wl=\{\xi:\phi_\xi> 1-12\delta/\eps\}$. The above calculation shows that $\sum_{\xi\notin\Wl}p_\xi\le \frac12\eps$. For every $\xi\in\Wl$ we can use conjecture \ref{conj:epsdel} (recall our definition $\delta=\frac1{12}\eps \delta_{\textrm{conjecture \ref{conj:epsdel}}}$ in the very beginning of the proof of the lemma) for $K_\xi$ and get
\[
\PP_{K_\xi}(o\lr T)\ge 1-\tfrac 12\eps.
\]
(We identified the set $T$ to a point and used that point as the $b$ of conjecture \ref{conj:epsdel}). Summing over $\xi$ and using the above estimate for $\xi\notin\Wl$ gives
\[
\sum p_\xi\PP_{K_\xi}(o\lr T)\ge (1-\tfrac 12\eps)^2>1-\eps.
\]
But this means that $\PP_G(o\lr T)>1-\eps$, proving the lemma.
\end{proof}

\begin{lem}
\label{lem:start}Assume $\theta(p)>0$ and conjecture \ref{conj:epsdel}. Then for every $\varepsilon>0$ and every integer $K\ge 2$ there exists an
$m>0$ such that for any integer $r\ge m$ we
have
\[
\mathbb{P}([-m,m]^{d}\xleftrightarrow{[-r,Kr]\times[-r,r]^{d-1}}\{Kr\}\times[-r,r]^{d-1})>1-\varepsilon.
\]
\end{lem}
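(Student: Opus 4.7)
The plan is to prove the lemma by induction on $K$, the inductive step being a single application of Lemma \ref{lem:vincentless}.

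\textbf{Base case $K=1$.} The face $\{r\}\times[-r,r]^{d-1}$ is the union, over the $2^{d-1}$ lattice symmetries that fix the first coordinate, of the quarter face $\{r\}\times[0,r]^{d-1}$. Each such quarter face is hittable by Lemma \ref{lem:quarterface} and Lemma \ref{lem:sym}; since connection to a union is implied by connection to any one of its components, the full face is hittable. This gives the case $K=1$.

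\textbf{Inductive step.} Assume the lemma for $K-1$. This is equivalent to the hittability of the elongated geometry $(F_{K-1},Q_{K-1})\coloneqq(\{K-1\}\times[-1,1]^{d-1},[-1,K-1]\times[-1,1]^{d-1})$. We apply Lemma \ref{lem:vincentless} with $\Hl$ consisting of all lattice-symmetric copies of the quarter-face hittable geometry together with all lattice-symmetric copies of $(F_{K-1},Q_{K-1})$. Using conjecture \ref{conj:epsdel} on the given $\varepsilon$ produces $\delta$, and Lemma \ref{lem:vincentless} then produces $R$.

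Take the graph $G=[-r,Kr]\times[-r,r]^{d-1}$ and glue all vertices of $[-m,m]^d$ to a single vertex $o$; this ensures $\PP_G(o\lr v)=\PP([-m,m]^d\lr v)$ for any $v\notin[-m,m]^d$. Take $B$ to be a transverse shrinking of the face $\{(K-1)r\}\times[-r,r]^{d-1}$, arranged so that $\expand{B}{R}$ lies inside $G$; applying the inductive hypothesis at a slightly smaller radius $r'=r-O(R)$ gives $\PP_G(o\lr B)>1-\delta$. Let $T=\{Kr\}\times[-r,r]^{d-1}$ and let $D\subseteq G$ be a subbox containing $\expand{B}{R}$ and $T$ but disjoint from $o$.

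The key step is to verify that $T$ is a target with respect to $(B,D,R,\Hl)$: for every $v\in\expand{B}{R}$ one must exhibit a hittable geometry $(F(v),Q(v))\in\Hl$ and a scale $\ell(v)\ge R$ with $v+\ell(v)Q(v)\subseteq D$ and $v+\ell(v)F(v)\subseteq T$. Depending on the position of $v$ and the signs of its transverse coordinates, one chooses either the $(K-1)$-elongated geometry oriented in the $+x_1$-direction scaled by $\ell(v)=(Kr-v_1)/(K-1)$, or a quarter-face geometry (in the appropriate lattice-symmetric orientation, with the sign in each transverse coordinate chosen according to the sign of $v_i$) scaled by $\ell(v)=Kr-v_1$. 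The lattice-symmetric variants provide enough flexibility to keep $v+\ell(v)Q(v)$ inside $D$ and $v+\ell(v)F(v)$ inside $T$.

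\textbf{Main obstacle.} The main technical point is the geometric verification that these two families of hittable geometries, in their various orientations, together cover every position $v\in\expand{B}{R}$ while simultaneously satisfying the $Q$-containment in $D$ and the $F$-containment in $T$. This requires a careful case analysis and coordinated tuning of the transverse shrinking of $B$, the subbox $D$, and the cutoff separating the two regimes, so that the $(K-1)$-elongated geometry handles $v$'s interior in the transverse directions and the quarter faces handle $v$'s near the transverse boundary. Once this verification is complete, Lemma \ref{lem:vincentless} yields $\PP_G(o\lr T)>1-\varepsilon$, which upon undoing the contraction of $[-m,m]^d$ to $o$ is the stated inequality.
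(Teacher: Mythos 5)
Your overall mechanism --- glue $[-m,m]^d$ to a single vertex $o$, seed an application of lemma \ref{lem:vincentless} with a previously established hittable geometry, and push the connection to the far face --- is the same one the paper uses, and the bookkeeping of $\eps$'s and $\delta$'s through repeated applications is unproblematic. But the step you defer as ``the main obstacle'' is precisely where your plan fails, and it is not a matter of careful tuning. In your inductive step $B$ has to be (up to an $O(R)$ shrink) the \emph{full} face $\{(K-1)r'\}\times[-r',r']^{d-1}$, because the induction hypothesis only gives connection to the full face and says nothing about hitting a transversally shrunk portion of it. Hence $\expand{B}{R}$ contains points $v$ with $v_1\approx(K-1)r$ and some transverse coordinate within $O(R)$ of $\pm r$. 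For such $v$, any geometry in your $\Hl$ whose scaled $F$ reaches the hyperplane $x_1=Kr$ needs $\ell(v)\ge (Kr-v_1)/(K-1)\approx r/(K-1)$ (and $\ell(v)\approx r$ for a quarter face); since every $Q$ in your $\Hl$ (both $[-1,1]^d$ and $[-1,K-1]\times[-1,1]^{d-1}$) has full transverse width $[-1,1]$ in each transverse coordinate, the box $v+\ell(v)Q(v)$ has transverse extent $[v_i-\ell(v),v_i+\ell(v)]$ and overflows the tube $[-r,r]^{d-1}$ by roughly $r/(K-1)-O(R)>0$. So $v+\ell(v)Q(v)\not\subseteq D$ for any subbox $D\subseteq G$, and the target condition genuinely fails for these $v$; no choice of orientation or of the cutoff between your two regimes repairs this. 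Shrinking $B$ transversally (say to half-width $\tfrac12 r$) removes the bad $v$'s but then $\PP_G(o\lr B)>1-\delta$ no longer follows from the induction hypothesis; and even if you strengthened the induction to ``hit the middle half of the face'', the landing set $v+\ell(v)F(v)$ of the $(K-1)$-elongated geometry sticks out of the shrunk target for transversally extreme $v$, and the one-sided (``elongated quarter-face'') geometry that would fix this is not known to be hittable.

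The paper avoids the problem by never trying to advance a whole $r$ in a single application: it applies lemma \ref{lem:vincentless} about $4K$ times, each time advancing the frontier by only $\tfrac14 r$ using quarter faces at scale $\approx\tfrac14 r$, with every intermediate target being a face piece of transverse half-width about $\tfrac12 r$ (inflated by $R$ at each step, which is harmless since $m\ge 16KR$). At that scale all boxes $v+\ell(v)Q(v)$ stay inside $[-r,Kr]\times[-r,r]^{d-1}$, and connection to the full face at $x_1=Kr$ is obtained at the end because the final shrunk target is contained in it. In other words, the transverse shrinking must be built into the iteration from the start rather than recovered from full-face statements; once you impose that, you are led to the paper's many-short-steps argument rather than an induction on $K$ with one application per increment.
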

That is, if $K\ge 2$ is an integer then $(\{K\}\times[-1,1]^{d-1},[-1,K]\times[-1,1]^{d-1})$ is a hittable geometry.

\begin{proof}
  We start by defining $\eps_0\coloneqq\eps,\eps_1,\dotsc,\eps_{4K}$ and $R_1,\dotsc,R_{4K}$ using lemma \ref{lem:vincentless} inductively.  All applications of lemma \ref{lem:vincentless} are with the hittable geometry of all quarter faces (recall lemma \ref{lem:quarterface}), which we will denote by $\Hl$. As for the $\eps$ parameters of lemma \ref{lem:quarterface}, for every $i\in\{1,\dotsc,4K\}$ we apply lemma \ref{lem:vincentless} with its input being $\eps_{\textrm{lemma \ref{lem:vincentless}}}=\eps_{i-1}$ and use its output to define $\eps_{i}=\min\{\eps_{i-1},\delta_{\textrm{lemma \ref{lem:vincentless}}}\}$ and $R_{i}=R_{\textrm{lemma \ref{lem:vincentless}}}$. Define $R\coloneqq\max \{R_i\}$, $\delta\coloneqq\min\{\eps_i\}=\eps_{4K}$.

  Next, using lemma \ref{lem:quarterface} choose an $n$ so large such that for any $r>n$ we have
\begin{equation}
\mathbb{P}([-n,n]^{d}\xleftrightarrow{[-\frac{1}{2}r,\frac{1}{2}r]^{d}}\{\tfrac{1}{2}r\}\times[-\tfrac{1}{2}r,\tfrac{1}{2}r]^{d-1})>1-\delta.\label{eq:n one face}
\end{equation}
We define $m=\max\{10n,16KR\}$, and let $r\ge m$ be arbitrary.

We now
apply lemma \ref{lem:vincentless}, with the hittable geometry of the quarter faces $\Hl$, to the graph $\Omega$ which one
gets by taking $[-r,Kr]\times[-r,r]^{d-1}$ and identifying the cube $[-n,n]^{d}$
to a point (which will be $o$). We take the subbox $D$, the intermediate stage $B$ and the target $T$ to be
\begin{align*}
  D&=[\tfrac{1}{8}r,\tfrac{3}{4}r]\times[-r,r]^{d-1},\\
  B&=\{\tfrac12r\}\times[-\tfrac12r,\tfrac12r]^{d-1}\\
  T&=\{\tfrac{3}{4}r\}\times[-\tfrac{1}{2}r-R,\tfrac{1}{2}r+R]^{d-1}.
\end{align*}
It is easy to see that $T$ is a target with respect to $(B,D,R,\Hl)$. Indeed, we need to show that for any $v\in [\frac12r-R,\frac12r+R]\times[-\tfrac12r-R,\tfrac12r+R]^{d-1}$ there exists an $\ell(v)$ and a quarter face $F(v)$ so that $v+[-\ell(v),\ell(v)]^d\subseteq D$ and $v+\ell(v)F(v)\subseteq T$. To see this choose $\ell(v)=\tfrac34r-v_1$ ($v_1$ is simply the first coordiante of $v$). Since $|v_1-\tfrac12r|\le R$ we get $\ell(v)\le \frac14r+R$ and then
\[
v+[-\ell(v),\ell(v)]^d\subseteq [\tfrac14r-2R,\tfrac34r]\times[-\tfrac34r-2R,\tfrac34r+2R]^{d-1}\subset D.
\]
Next define
\[
F(v)=\prod_{i=1}^{d}\begin{cases}
\{1\} & i=1\\{}
[0,1] & i>1,\;v_{i}\le0\\{}
[-1,0] & i>1,\;v_{i}>0.
\end{cases}
\]
It is straightforward to check that $v+\ell(v)F(v)\subseteq T$.

Thus we may apply lemma \ref{lem:vincentless} with $\eps_{\textrm{lemma \ref{lem:vincentless}}}=\eps_{4K}$. Since $\PP(o\lr B)>1-\eps_{4K}$ and since $\ell(v)\ge R_{4K}$ for all $v$ we get
\[
\mathbb{P}_\Omega(o\lr T)>1-\eps_{4K-1}.
\]
We repeat this $4K-3$ more times, each time moving
the face by $\frac{1}{4}r$, inflating it by $R$ and moving from $\eps_{i+1}$ to $\eps_i$ and get
\begin{multline*}
\mathbb{P}_\Omega(o\lr\{Kr\}\times[-r,r]^{d-1})\\
\ge\mathbb{P}_\Omega(o\lr\{Kr\}\times[-\tfrac{1}{2}r-(4K-2)R,\tfrac{1}{2}r+(4K-2)R]^{d-1})>1-\eps_2\ge 1-\varepsilon.
\end{multline*}

Going back to $\mathbb{Z}^{d}$ we get
\[
\mathbb{P}([-n,n]^{d}\xleftrightarrow{[-r,Kr]\times[-r,r]^{d-1}}\{Kr\}\times[-r,r]^{d-1})>1-\varepsilon.
\]
Since $n\le m$, the lemma is proved.
\end{proof}
\begin{lem}\label{lem:shrink}Let $p$ satisfy $\theta(p)>0$ and assume conjecture \ref{conj:epsdel}. Then for every $\eps>0$ there exists a $\delta>0$ and an $m$ such that for every $r>m$ and every graph $G$ containing a subbox $D=[-5r,25r]\times [-5r,5r]^{d-1}$ and every $o\notin D$,
  \begin{multline*}
  \PP(o\xleftrightarrow{A} [-3r,3r]^d)>1-\delta\implies
  \PP(o\xleftrightarrow{U} (20r,0,\dotsc,0)+ [-3r,3r]^{d})>1-\eps,\\
  A\coloneqq G\setminus[5r,25r]\times[-5r,5r]^{d-1}\qquad U\coloneqq A\cup [5r,22r]\times[-2r,2r]^{d-1}
  \end{multline*}
\end{lem}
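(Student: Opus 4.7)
The plan is to work in the induced subgraph $G|_U$ (the subgraph of $G$ obtained by keeping only vertices of $U$ and edges of $G$ between them). Since $A \subseteq U$, the hypothesis $\PP(o \xleftrightarrow{A} [-3r,3r]^d) > 1-\delta$ gives $\PP_{G|_U}(o \lr [-3r,3r]^d) > 1-\delta$. Moreover, since $B_1 \coloneqq (20r,0,\ldots,0) + [-3r,3r]^d = [17r,23r]\times[-3r,3r]^{d-1}$ lies entirely inside the excluded chunk $[5r,25r]\times[-5r,5r]^{d-1}$, one computes $B_1 \cap U = [17r,22r]\times[-2r,2r]^{d-1}$, and so the desired conclusion is equivalent to $\PP_{G|_U}(o \lr B_1 \cap U) > 1-\eps$. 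The main tool is Lemma~\ref{lem:vincentless}, applied in $G|_U$.

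The geometric obstacle is that no single subbox of $G|_U$ can simultaneously contain $\expand{B}{R}$ for $B = [-3r,3r]^d$ (which has transverse extent $[-3r,3r]$) and extend to $B_1 \cap U$ through the narrower tube $[5r,22r]\times[-2r,2r]^{d-1}$. We therefore apply Lemma~\ref{lem:vincentless} in two successive stages. In the first (funneling) stage, we use the cube-shaped subbox $D_1 = [-4r,4r]^d \subseteq A$ to carry the connection from $B = [-3r,3r]^d$ to an intermediate target $T$ of the form $[3r,4r]\times[-\rho,\rho]^{d-1}$, with $\rho$ chosen small enough that $T$ and a suitable $R$-neighbourhood of it fit comfortably inside the tube's transverse range $[-2r,2r]$. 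In the second (tube traversal) stage, we use the long thin subbox $D_2 = [-R, 22r]\times[-2r,2r]^{d-1} \subseteq U$, set $B' = T$, and reach a target $T' \subseteq B_1 \cap U$ through the tube. The hittable geometries used are tube geometries from Lemma~\ref{lem:start} oriented along the various lattice axes via Lemma~\ref{lem:sym}, supplemented by quarter face geometries from Lemma~\ref{lem:quarterface} in the first stage. Choosing $\delta$ small enough and composing the two applications yields $\PP_{G|_U}(o \lr T') > 1-\eps$, which gives the conclusion.

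The main technical obstacle is the funneling stage, specifically verifying that $T$ is a valid target with respect to $(B, D_1, R, \Hl)$: for a corner vertex $v \in \expand{B}{R}$ with some coordinate $v_i$ close to $\pm 3r$ for $i \ge 2$, a tube geometry oriented along $+e_1$ cannot simultaneously fit inside $D_1$ and reach $T$'s narrow transverse strip, because the tube's transverse width $2\ell$ forces $|v_i|+\ell$ to exceed $\rho$. The resolution is to use a tube geometry oriented along $-\sign(v_i)\,e_i$ for the extremal coordinate $i$, which moves the connection inward toward the $x_1$-axis while remaining inside the cube $D_1$; the length parameter $K$ and scale $\ell$ are chosen so that the endpoint simultaneously lands in $[3r,4r]\times[-\rho,\rho]^{d-1}$ and the tube stays inside $[-4r,4r]^d$. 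For $v$ with extreme coordinates in several directions simultaneously, one iterates this idea by subdividing the funneling stage into a few sub-steps; in any case the required case analysis is routine, if tedious.
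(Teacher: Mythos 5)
Your two--stage strategy --- first funnel the cluster of $[-3r,3r]^d$ into a box whose transverse extent fits inside the tube's cross-section, then run lemma \ref{lem:vincentless} with the tube geometry of lemma \ref{lem:start} through $[5r,22r]\times[-2r,2r]^{d-1}$ --- is exactly the paper's strategy, and your tube-traversal stage is unproblematic. The gap is in the funneling stage: with $B=[-3r,3r]^d$, $D_1=[-4r,4r]^d$ and $T=[3r,4r]\times[-\rho,\rho]^{d-1}$, the target condition demands that \emph{every} $v\in\expand{B}{R}$ reach $T$ by a single scaled geometry with $v+\ell(v)Q(v)\subseteq D_1$, and this fails on a large set of $v$, not only at multi-extremal corners, and your proposed fix does not repair it. Take $v=(0,2r,0,\dotsc,0)$. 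A tube along $+e_1$ fails for the reason you give (its terminal face has second coordinate spanning $[2r-\ell,2r+\ell]\not\subseteq[-\rho,\rho]$). A tube along $-e_2$, your suggested remedy, must satisfy $v_2+4r\ge K\ell$ to stay in $D_1$, so $\ell\le 6r/K$, and its terminal face has first coordinate in $[-\ell,\ell]$, nowhere near $[3r,4r]$. A quarter face whose face is perpendicular to $e_1$ needs $\ell\ge 3r$ to push the first coordinate into $[3r,4r]$, but then the second coordinate of $v+\ell Q$ reaches $2r+\ell\ge 5r>4r$, leaving $D_1$; a quarter face perpendicular to any other axis has its face's first coordinate spanning an interval containing $0$. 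So no geometry in your family works for this $v$: the obstruction is the first-coordinate position of your intermediate target, not the number of extremal coordinates, and iterating only ``for $v$ with extreme coordinates in several directions'' cannot close it.

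The repair --- which is what the paper does --- is to keep the intermediate targets \emph{centered} and shrink one coordinate per application: with quarter faces and $\ell(v)=\tfrac32 r$ one passes from $[-3r,3r]^d$ to $[-\tfrac32r-R,\tfrac32r+R]\times[-3r-R,3r+R]^{d-1}$, then halves the second coordinate, and so on, in $d$ applications (a number fixed in advance, so the $\eps\mapsto\delta$ composition of lemma \ref{lem:vincentless} still defines $\delta$), ending with a centered cube of radius about $\tfrac32r+dR\le 2r$. There is no need to move the cluster near $x_1=3r$ before entering the tube: in the tube stage the variable length $\ell(v)=\lfloor(20r-v_1)/K\rfloor$ of the geometry $(\{K\}\times[-1,1]^{d-1},[-1,K]\times[-1,1]^{d-1})$ absorbs the distance to $20r$, exactly as in the paper's final application with $D=[-2r,22r]\times[-2r,2r]^{d-1}$, $B=[-\tfrac32r-dR,\tfrac32r+dR]^d$ and $T=(20r,0,\dotsc,0)+[-2r,2r]^d$. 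Replacing your single funneling step by this one-coordinate-at-a-time shrinking turns your outline into the paper's proof.
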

\begin{figure}
  \centering
  \input{ABD.pstex_t} 
  \caption{The geometry of lemma \ref{lem:shrink}.}\label{cap:shrink}
\end{figure}
See figure \ref{cap:shrink}.
\begin{proof}
  We first apply lemma \ref{lem:vincentless} with the hittable geometry
  \[
  (\{88\}\times[-1,1]^{d-1},[-1,88]\times[-1,1]^{d-1})
  \](in other words, with the hittable geometry from lemma \ref{lem:start} with $K=88$), and with $\eps_{\textrm{lemma \ref{lem:vincentless}}}=\eps$. Use the output of the lemma to define $\eps_2\coloneqq \delta_{\textrm{lemma \ref{lem:vincentless}}}$ and $R_2\coloneqq R_{\textrm{lemma \ref{lem:vincentless}}}$.

  Next we use lemma \ref{lem:vincentless} $d$ more times inductively to define $\eps_3,\dotsc,\eps_{d+2}$ and $R_3,\dotsc,R_{d+2}$. This time we use it just with the quarter face hittable geometry (lemma \ref{lem:quarterface}). That is, for every $i\in\{3,\dotsc,d+2\}$ we apply lemma \ref{lem:vincentless} with its input being $\eps_{\textrm{lemma \ref{lem:vincentless}}}=\eps_{i-1}$ and use its output to define $\eps_i\coloneqq\min\{\eps_{i-1},\delta_{\textrm{lemma \ref{lem:vincentless}}}\}$ and $R_i\coloneqq R_{\textrm{lemma \ref{lem:vincentless}}}$. Define $m\coloneqq \max\{8d\max\{R_i\},88d\}$ and  $\delta\coloneqq\min\{\eps_i\}=\eps_{d+2}$. Let $r>m$ be arbitrary.

  We now define the geometric parameters for the $d+1$ applications of lemma \ref{lem:vincentless}. The first $d$ applications are with the hittable geometry of the quarter face $\Hl$. We start by applying it in the subgraph $A$ with $R=m/8d$ and with
  \begin{gather*}
    D= [-5r,5r]^d\qquad  B= [-3r,3r]^d\\
    T= [-\tfrac32r-R,\tfrac32r+R]\times[-3r-R,3r+R]^{d-1}.
  \end{gather*}
  Let us check that $T$ is indeed a target with respect to $(B, D, R, \Hl)$. Indeed, if $(v_1,\dotsc,v_d)\in [-3r-R,3r+R]^d$ then we define $\ell(v)= \frac32r$, $Q(v)=[-1,1]^d$ and
  \[
  F(v)=\prod_{i=1}^d \begin{cases}
  \{-\sign(v_1)\} & i=1\\
  [0,1] & i>1,v_i<0\\
  [-1,0] & i>1,v_i\ge 0.
  \end{cases}
  \]
  It is straightforward to check that $v+\ell(v)F(v)\subset T$ and $v+\ell(v)Q(v)\subset D$ for all $v$ (note that we used here $R<r/2$). We get that $\PP(o\xleftrightarrow{A} T)>1-\eps_{d+1}$.

  The second application of lemma \ref{lem:vincentless} halves the second dimension. Namely we use it, still in the subgraph $A$ and with $R=m/8d$, but this time our geometric parameters are
  \begin{gather*}
    D= [-5r,5r]^d\qquad  B= [-\tfrac32r-R,\tfrac32r+R]\times[-3r-R,3r+R]^{d-1}\\
    T= [-\tfrac32r-2R,\tfrac32r+2R]^2\times[-3r-2R,3r+2R]^{d-2}.
  \end{gather*}
  To check that $T$ is a target, pick an arbitrary $(v_1,\dotsc,v_d)\in [-\frac32r-2R,\frac32r+2R]\times[-3r-2R,3r+2R]^{d-1}$ and define $\ell(v)=\frac32r$ and
  \[
  F(v)=\prod_{i=1}^d \begin{cases}
  \{-\sign(v_2)\} & i=2\\
  [0,1] & i\ne 2,v_i<0\\
  [-1,0] & i\ne 2,v_i\ge 0.
  \end{cases}
  \]
  Again, the verification is straightforward.

  We continue this way, each time halving one dimension while inflating all the rest by $R$, and moving from $\eps_{i+1}$ to $\eps_i$ until we finally end up with
  \[
  \PP(o\xleftrightarrow{A} [-\tfrac32r-dR,\tfrac32r+dR]^d)>1-\eps_2.
  \]
  The final application of lemma \ref{lem:vincentless} is using the geometry we started the lemma with. Here are the exact details. We use the subgraph $U$, $R=m/8d$ and
  \begin{gather*}
    D=[-2r,22r]\times[-2r,2r]^{d-1}\qquad B=[-\tfrac32r-dR,\tfrac32r+dR]^d\\
    T=(20r,0,\dotsc,0)+[-2r,2r]^{d}
  \end{gather*}
  For every $v\in[-\tfrac32r-(d+1)R,\tfrac32r+(d+1)R]^d$ we choose $\ell(v)=\lfloor(20r-v_1)/88\rfloor$, $F(v)=\{88\}\times[-1,1]^{d-1}$ and $Q(v)=[-1,88]\times[-1,1]^{d-1}$. This definition implies that $\ell(v)< \frac14r$ and then the two requirements, $v+\ell(v)Q(v)\subseteq D$ and $v+\ell(v)F(v)\subseteq T$, are both straightforward to verify. Lemma \ref{lem:vincentless} then gives us that
  \[
  \PP(o\xleftrightarrow{U} T)>1-\eps,
  \]
  proving the lemma.
\end{proof}
The following definition encompasses the well-known concept of an exploration process.
\begin{defn*}
  A (supercritical, two-dimensional) exploration process is a couple of sequences of random subsets $G_i\subseteq X_i\subset\ZZ^2$ with the following properties:
  \begin{enumerate}
  \item $X_1=\{0\}$.
  \item $G_i\subseteq G_j$ and $X_i\subseteq X_j$ for all $j>i$.
  \item If there exists a $v\in\ZZ^2\setminus X_i$ neighbouring $G_i$ then the first of the $v$ satisfying this property (in lexicographic order) has $X_{i+1}=X_{i}\cup\{v\}$ and $G_{i+1}\subseteq G_i\cup\{v\}$. If no such $v$ exists then $X_{i+1}=X_i$ and $G_{i+1}=G_i$.
  \item There exists some $p>p_c(\ZZ^2,\textrm{site})$, i.e.\ $p$ is bigger than the critical threshold for vertex percolation on $\ZZ^2$, with the following property. Whenever there exists a $v$ as in (3) then
    \begin{equation}\label{eq:good>p}
    \PP(v\in G_{i+1}\,|\,G_1,\dotsc,G_i,X_1,\dotsc,X_i)>p.
    \end{equation}
    \item $\PP(G_1=\{0\})>0$.
  \end{enumerate}
\end{defn*}
It is well-known and easy to see that if $(G_i,X_i)$ is an exploration process then
\[
\PP(\lim_{i\to\infty}|G_i|=\infty)>0.
\]
We skip the details, but remark that \cite[lemma 1]{GM} is similar.

\begin{rem*}Here and below, there is nothing special in the lexicographic order. Any deterministic, predefined order on $\ZZ^2$ would have worked equally well.
\end{rem*}
\begin{proof}[Proof of theorem \ref{thm:conjtotheta}]
Let $p$ be such that $\theta(p)>0$. We will show that at $p$ there
is percolation in some slab, which will imply, by Aizenman-Grimmett \cite[\S\S 3.2-3.3]{Gri},
that $p>p_{c}$. Since $p$ was arbitrary with $\theta(p)>0$, this
will show the claim.

Let $\varepsilon$ satisfy $\varepsilon<1-p_{c}(\mathbb{Z}^{2},\text{site})$.
Let $\delta>0$ and $m$ be given by lemma \ref{lem:shrink} used with $\eps_{\textrm{lemma \ref{lem:shrink}}}=\frac18\eps$. Let $m_2$ be given by lemma \ref{lem:start} with $\eps_{\textrm{lemma \ref{lem:start}}}=\delta$ and $K=3$.

We now use a peculiar feature of lemma \ref{lem:vincentless} that we have not used before: that $\delta$ does not depend on the hittable geometry being used, only $R$ does. We use lemma \ref{lem:vincentless} with $\eps_{\textrm{lemma \ref{lem:vincentless}}}=\delta$ and call the first output $\delta_2$, namely $\delta_2\coloneqq\delta_{\textrm{lemma \ref{lem:vincentless}}}$. We next choose an integer $K\ge 20$, so large that $(1-\delta_2)^K<\frac18\eps$. We use lemma \ref{lem:start} to show that $(\{2K\}\times[-1,1]^{d-1},[-1,2K]\times[-1,1]^{d-1})$ is a hittable geometry, and then return to lemma \ref{lem:vincentless} with this hittable geometry to get $R$. We let $r$ be some number with $r\ge 2K\max\{m,m_2,R\}$ and divisible by $K$. This finishes the choice of parameters. 


To continue, we need some geometric notation.
For each $v\in\mathbb{Z}^{2}$ we associate two concentric cubes $M_v\subset Q_{v}\subset\mathbb{Z}^{d}$
by
\[
M_v\coloneqq 20vr+[-3r,3r]^d\qquad Q_{v}\coloneqq 20vr+[-5r,5r]^{d}
\]
where the $v$ on the right hand sides are in fact $(v_1,v_2,0,\dotsc,0)\in\ZZ^d$. 

For an $x$ neighbouring $v$ define the
`edge' $E_{v,x}$ to be the $(10r)^d$
box between them, as well as $Q_x$. For example
\[
E_{v,v+(1,0)}=20vr+[5r,25r]\times[-5r,5r]^{d-1}.
\]
and similarly the other 3 neighbours of $v$. We will also need a narrowed version which we will denote by $H_{v,x}$. Again we take the right neighbour as an example:
\[
H_{v,v+(1,0)}=20vr+[5r,22r]\times[-2r,2r]^{d-1}.
\]
For every $j\in\{0,\dotsc,K\}$ we define
the partial edge $E^j_{v,x}$ to be the $j/K$ part of the first half of $E_{v,x}$ touching $Q_{v}$, and we let $H_{v,x}^{j}$ be its narrowed version. For example
\begin{align*}
  E_{v,v+(1,0)}^{j}&=20vr+[5r,5r+10rj/K]\times[-5r,5r]^{d-1}.\\
  H_{v,v+(1,0)}^{j}&=20vr+[5r,5r+10rj/K]\times[-2r,2r]^{d-1}.
\end{align*}
See figure \ref{cap:Eisimp}.

\begin{figure}
  \input{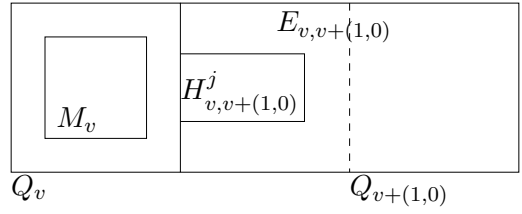}
  \caption{The set $Q_v$, $M_v$, $E$ and $H$.}\label{cap:Eisimp}
\end{figure}

The proof revolves around an exploration process. We will construct a random $(\El_i,G_i,X_i)_{i=1}^\infty$ where $\El_i\subset\ZZ^2\times[-5r,5r]^{d-2}$, $\El_i\subseteq \El_{i+1}$ and $G_i\subseteq X_i\subset \ZZ^2$ satisfying the following properties
\begin{enumerate}
\item $G_i$ and $X_i$ are functions of $\omega|_{\El_i}$. Further,
  \begin{equation}\label{eq:Ei and Xi}
  v\in X_i\iff Q_v\subseteq \El_i\iff Q_v\cap\El_i\ne\emptyset.
  \end{equation}
\item \label{pg:GiXi}$(G_i,X_i)$ is a two-dimensional, supercritical exploration process.
\item For every $v=(v_1,v_2)\in G_i$, $0\stackrel{\El_i}{\lr}(v_1,v_2,0,...,0)$.
\end{enumerate}
We remark that (3) would not be an if and only if --- there might be $v\in X_i\setminus G_i$ which are also connected to 0 in $\El_i$. This will not matter for us. What is clear at this point is that the combination of (2) and (3) proves that the probability that there is percolation in $\ZZ^2\times[-5r,5r]^{d-2}$ is positive, and hence $p>p_c$, as promised. So we only need to show that $\El_i,G_i,X_i$ as above indeed exist. We will first explain how to construct the process (with special note on the fact that it would satisfy formula \eqref{eq:goodedge 2.5} below) and then prove its properties.

We start by declaring that $G_0=\{(0,0)\}$ if all edges of $Q_{(0,0)}$ are open (and declare $\El_1=Q_{(0,0)}$).

Assume now that $\El_i,G_i,X_i$ have been constructed. If there is no $v\in\ZZ^2\setminus X_i$ neighbouring some $w\in G_i$ then there is nothing to do, so assume this is not the case, and let $v$ be the minimal in the lexicographic order satisfying this property. Let $w\in G_i$ be the neighbour of $v$ (if more than one exists, take the first in the lexicographic order). Define $X_{i+1}=X_i\cup\{v\}$.

Denote by $\mathcal{X}=\mathcal{X}_v$ the set of neighbours of $v$ which are not
in $X_{i}$ (in particular $w\not\in\mathcal{X}$). For
every $x\in\mathcal{X}$ we will declare whether the connection $v$-$x$
is good or not separately (these events will not be independent between
the different $x$), and declare $v$ good (i.e.\ define $G_{i+1}=G_i\cup\{v\}$) if all connections to all
$x\in\mathcal{X}$ are good. Fix therefore one $x\in\mathcal{X}$.
We declare the connection $v$-$x$ good if there exists a choice
of $j\in\{0,\dotsc,K-1\}$ such that
\begin{gather}
\mathbb{P}(0\xleftrightarrow{\mathcal{D}\cup E_{v,x}}M_{x}\,|\,\omega|_{\mathcal{D}})>1-\delta\label{eq:goodedge 2.5}\\
\mathcal{D}\coloneqq\mathcal{D}_{i,x}=\mathcal{E}_{i}\cup E_{w,v}\cup H_{v,x}^{j}\nonumber
\end{gather}
If such a $j$ may be found let $j_{x}$ be the first one, and if
not let $j_{x}=K-1$, and declare the connection $v$-$x$ bad. In
both cases ($v$ is good, i.e.\ all connections are good, or $v$
is bad i.e.\ at least one connection is bad) declare
\[
\mathcal{E}_{i+1}=\mathcal{E}_{i}\cup E_{w,v}\cup\bigcup_{x\in\mathcal{X}}H_{x}^{j_{x}}.
\]
See figure \ref{cap:Ei}. This terminates the definition of $\El_{i+1}$, $G_{i+1}$ and $X_{i+1}$ and, inductively, of the entire sequence.

Most of the properties we require from $(\El_i,G_i,X_i)$ are obvious. In particular clauses (1) and (3) above are clear, as well as the fact that $\El_i\subseteq \El_{i+1}$. As for clause (2), which states that $(G_i,X_i)$ is an exploration process, all properties of an exploration process are clear except the supercriticality \eqref{eq:good>p}. Thus, in the rest of the proof we will demonstrate this property. We do so in several steps.\\

\textit{Step I.} Let $w\sim v$ be two vertices of $\ZZ^2$ and assume $w$ was added to $X$ at step $i_1$ and $v$ at some step $i_2$ for $i_1<i_2$ (that is $w,v\not\in X_{i_1}$, $w\in X_{i_1+1}$, $v\not\in X_{i_2}$ and $w\in X_{i_2+1}$). We note that in this case $\El \cap E_{w,v}$ did not change in the interval $(i_1,i_2]$ namely
  \begin{equation}\label{eq:El stable}
    \El_{i_1+1}\cap E_{w,v}=\El_{i_1+2}\cap E_{w,v}=\dotsb=\El_{i_2}\cap E_{w,v}.
  \end{equation}
  The proof of \eqref{eq:El stable} is straightforward from the construction above and we omit it. 

With this simple geometric fact established we next claim that for each $i$, for every $w\in G_i$ and every $v\in\ZZ^2\setminus X_i$ neighbouring $w$, one has
  \begin{equation}
\mathbb{P}(0\xleftrightarrow{\mathcal{E}_{i}\cup E_{w,v}}M_{v}\,|\,\omega|_{\mathcal{E}_{i}})>1-\delta.\label{eq:goodedge 3}
  \end{equation}
To see this let $w$ be as above, and assume first that $w\neq 0$. This means that for some $1\leq j< i$ we have $w\notin G_j$ but $w\in G_{j+1}$. In other words, $w$ was declared `good' at the $j+1^\textrm{st}$ step. As $v$ neighbours $w$, this implies that \eqref{eq:goodedge 2.5} holds with $\mathcal{D}=\mathcal{D}_{j,w}$, that is,
\[
\mathbb{P}(0\xleftrightarrow{\mathcal{D}_{j,w}\cup E_{w,v}}M_{v}\,|\,\omega|_{\mathcal{D}_{j,w}})>1-\delta.
\]
To obtain \eqref{eq:goodedge 3} we need to show that the $\mathcal{D}_{j,w}$ in the estimate above may be replaced by ${\mathcal{E}_i}$. Using \eqref{eq:El stable} we get that $(\mathcal{D}_{j,w}\cup E_{w,v})\cap \mathcal{E}_{i}= \mathcal{D}_{j,w}$. This means that the conditioning on
$\omega|_{\mathcal{D}_{j,w}}$ in the above inequality may be changed to a conditioning on $\omega|_{\mathcal{E}_i}$ as the additional information in this conditioning is irrelevant for the event. Now, we may replace also the set $\mathcal{D}_{j,w}\cup E_{w,v}$, appearing above the arrow by $\mathcal{E}_{i}\cup E_{w,v}$, as increasing this set will only increase the probability. This completes the proof of \eqref{eq:goodedge 3} for $w\neq 0$.

If $w=0$ we argue as follows. We first note that our definition of $m_2$ implies that
\[
\PP(M_{(0,0)}\xleftrightarrow{Q_{(0,0)}\cup E_{(0,0),v}} M_v)>1-\delta.
\]
Since this is an increasing event, and so is the event that all edges of $Q_{(0,0)}$ are open, they are positively correlated by the FKG inequality. Recalling that we declared the first step to succeeds if all edges of $Q_{(0,0)}$ are open (and that then we have $\El_1=Q_{(0,0)}$), we get
\[
\PP(0\xleftrightarrow{\mathcal{E}_{1}\cup E_{(0,0),v}} M_v\,|\,\omega|_{\mathcal{E}_{1}})>1-\delta.
\]
Again we use \eqref{eq:El stable} to claim that $(\mathcal{E}_{1}\cup E_{w,v})\cap \mathcal{E}_{i}= \mathcal{E}_{1}$, and so the proof may now be completed in much the same way as above (first enlarge the set on which $\omega$ is conditioned, and then  enlarge the set above the arrow).\\

\begin{figure}
  \input{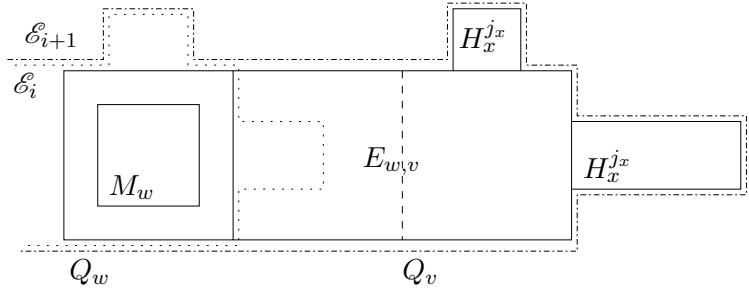}
  \caption{The set $\El_i$ is in a dotted line while $\El_{i+1}$ is dot-dashed. There are two sets denoted $H_x^{j_x}$ in the picture --- they simply relate to different $x$, the right and top ones. There is nothing `wrong' with the fact that no $H$ appear in the picture below --- this would happen if both boxes below were already in $X_i$ when $w$ and $v$ were first explored, or if some $j$ were 0.}\label{cap:Ei}
\end{figure}

\textit{Step II.} Let $v$ be as in \eqref{eq:good>p}. Since $G_i$ and $X_i$ are both functions of $\omega|_{\El_i}$, to obtain \eqref{eq:good>p} it is enough to show that
\begin{equation}\label{eq:bigger pc cond Ei}
\PP(v\in G_{i+1}\,|\,\omega|_{\El_i})>1-\eps
\end{equation}
(recall from the beginning of the lemma that $\eps$ was defined so that $1-\eps>p_c(\ZZ^2,\textrm{site})$). Our first task is to get rid of the conditioning.
We now claim that
our conditioned percolation is equivalent to usual percolation on
an auxiliary graph. Let $\Omega$
be the graph $E_{w,v}\cup\bigcup_{x\in\mathcal{X}}E_{v,x}$
with the following two transformations applied to $\mathcal{E}_{i}\cap E_{w,v}$ (it might help the reader to note that this set is the union of $H^{j}_{w,v}$ for some $1\leq j\leq K-1$ and a single vertex layer, see again figure \ref{cap:Ei}):
\begin{enumerate}
\item For any two vertices $x$ and $y$ in $\mathcal{E}_{i}\cap E_{w,v}$
such that $x\xleftrightarrow{\mathcal{E}_{i}}y$, identify $x$
and $y$.
\item For every edge $e$ with both vertices in $\mathcal{E}_{i}\cap E_{w,v}$
which is closed in $\omega$, remove $e$.
\end{enumerate}
Since $w\in G_{i}$ and $v\not\in X_i$, \eqref{eq:goodedge 3} says that
\[
\mathbb{P}(0\xleftrightarrow{\mathcal{E}_{i}\cup E_{w,v}}M_{v}\,|\,\omega|_{\mathcal{E}_{i}})>1-\delta.
\]
Any connection to $M_v$ through $\El_i\cup E_{w,v}$ must actually use vertices of $E_{w,v}$ (because $v\not\in X_i$ so $Q_v\cap\El_i=\emptyset$, recall \eqref{eq:Ei and Xi}) and in particular $0\xleftrightarrow{\mathcal{E}_{i}}E_{w,v}$, as otherwise the conditioned probability would have been zero.
Hence there is a vertex of $\Omega$ corresponding to $\mathcal{C}(0;\mathcal{E}_{i})\cap E_{w,v}$. 
Call this vertex $o$. Further, (\ref{eq:goodedge 3}) can be represented
in $\Omega$ as follows.
\begin{equation}
\mathbb{P}_\Omega(o\xleftrightarrow{E_{w,v}}M_v)>1-\delta.\label{eq:goodedge from the past}
\end{equation}
(As before, here and below we identify any $A\subset\mathbb{Z}^{d}$ with the
subset of the vertices of $\Omega$ which are the equivalence classes
of all elements of $A$). On the other hand, what we need to prove,
namely that $v$ is good with high probability, can also be represented in $\Omega$. 
We need to show that, with probability at least $1-\varepsilon$,
there exist some $\{j_{x}:x\in\Xl\}$ such that for every
$x\in\mathcal{X}$,
\begin{equation}
\PP_\Omega(o\xleftrightarrow{E_{w,v}\cup E_{v,x}}M_{x}\,|\,\omega|_{E_{w,v}\cup H_{v,x}^{j_x}})>1-\delta.\label{eq:need in Omega}
\end{equation}
All these
equivalences (i.e.\ the facts that we have (\ref{eq:goodedge from the past})
and that we need to show (\ref{eq:need in Omega})) are straightforward
and we omit their proof. From now on we will work only in $\Omega$
and we will no longer note it in the formulas, the way we did in (\ref{eq:goodedge from the past}) or (\ref{eq:need in Omega}).\\

\textit{Step III.} Let $x\in\mathcal{X}$. Recall the definition of $H_{v,x}^j$ above, and let $F_{v,x}^j$ be the face of $H_{v,x}^j$ in the direction of $x$. As usual we give a horizontal example:
\[
F_{v,v+(1,0)}^j=20vr+\{5r+10rj/K\}\times[-2r,2r]^{d-1}.
\]
For  $j\in\{0,\dotsc,K-1\}$ define an auxiliary event $\mathcal{B}_{j}$ by
\[
\Bl_j\coloneqq\{\PP(o\xleftrightarrow{E_{w,v}\cup H_{v,x}}F_{v,x}^{j+1}\,|\,
\omega|_{E_{w,v}\cup H_{v,x}^j}) \le 1-\delta_2\}
\]
(note that the conditioning is up to $j$ but the connection is to $j+1$). We claim that if for some  $j\in\{0,\dotsc,K-1\}$ the event $\Bl_j$ did not happen then the connection between $v$ and $x$ is good, that is,
(\ref{eq:need in Omega}) holds for $v$ and $x$.

To see this we first get rid of the conditioning by defining an auxiliary graph, as usual. The graph is the graph one gets by taking $\Omega$ and identifying any $x,y\in E_{w,v}\cup H_{v,x}^j$ if they are connected in it, and deleting any closed edge both whose vertices are in $E_{w,v}\cup H_{v,x}^j$. Usual percolation on this auxiliary graph is identical to conditioned percolation on $\Omega$. The assumption that $\Bl_j$ did not happen means that in our auxiliary graph $\PP(o\lr F_{v,x}^{j+1})>1-\delta_2$.

  We now apply lemma \ref{lem:vincentless}. We use it with $\eps_{\textrm{lemma \ref{lem:vincentless}}}=\delta$ and the hittable geometry $(\{2K\}\times[-1,1]^{d-1},[-1,2K]\times [-1,1]^{d-1})$ and its image by lattice symmetries. We use the parameters
\[
  D=E_{v,x}\setminus E_{v,x}^j\qquad B=F_{v,x}^{j+1}\qquad T=M_x.
\]
We need to verify that $T$ is a target, namely that for every $u\in \expand{B}{R}$ there is a $\ell(u)$ and a hittable geometry $(F(u),Q(u))$ such that $u+\ell(u)Q(u)\subseteq D$ and $u+\ell(u)F(u)\subseteq T$. The hittable geometry depends only on $x$. Say if $x=v+(0,1)$ then we take $({2K}\times[-1,1]^{d-1},[-1,2K]\times[-1,1]^{d-1})$ and for other $x$ an appropriate rotation of it. The choice of $\ell(u)$ depends only on $j$, we define $\ell(u)=\lfloor (15r-10r(j+1)/K)/2K\rfloor$. Let us verify that $T$ is a target in the horizontal case (the other cases are identical, and assume also $v=0$ for brevity). Since $K\ge 20$ we have $\ell(u)< \frac12 r$ and then
\begin{align*}
  u+\ell(u)F(u)&\subset \Big(5r+\frac{10r(j+1)}{K} + [-R,R]\Big)\times[-2r-R,2r+R]^{d-1}+\\
  &\qquad \Big\lfloor \frac{15r-10r(j+1)/K}{2K}\Big\rfloor\big(\{2K\}\times[-1,1]^{d-1}\big)\\
  &\subset [20r-2K-R,20r+R]\times [-\tfrac52r-R,\tfrac52r+R]^{d-1}.
\end{align*}
Since $r\ge 2KR$ we will indeed have $u+\ell(u)F(u)\subset M_x$. The verification that $u+\ell(u)Q(u)\subset D$ is similar and we omit it.\\

\textit{Step IV.}
It remains to show that with high probability, for every  $x\in\mathcal{X}$ there exists some  $j\in\{0,\dotsc,K-1\}$ so that $\Bl_j$ did not happen. To show this we first claim that
if $x\in\mathcal{X}$ then
\[
\mathbb{P}(o\xleftrightarrow{E_{w,v}\cup H_{v,x}}M_{x})>1-\tfrac{1}{8}\varepsilon.
\]
Indeed, this follows from lemma \ref{lem:shrink} and the parameters we defined. We use lemma \ref{lem:shrink} for the subgraph $E_{w,v}\cup E_{v,x}$ of $\Omega$ with the subbox $D=E_{v,x}\cup Q_v$ (notice that the isomorphism between $D$ and $[-5r,25r]\times[-5r,5r]^{d-1}$ needed for lemma \ref{lem:shrink} might require a rotation) and with $\eps_{\textrm{lemma \ref{lem:shrink}}}=\frac18\eps$. Our assumption that $r>m$, that $o\notin D$ and that $\PP(o\xleftrightarrow{E_{w,v}} M_v)>1-\delta$ \eqref{eq:goodedge from the past} are the conditions of lemma \ref{lem:shrink}, and the conclusion of the lemma is what we need.

Next, denote $\Bl\coloneqq\bigcap_{j=0}^{K-1}\Bl_j$. We claim that
\begin{equation}\label{eq:B cap hit}
\PP(\Bl\cap\{o\xleftrightarrow{E_{w,v}\cup H_{v,x}}M_x\})\le\tfrac18\eps.
\end{equation}
Indeed, to hit $M_x$ via $H_{v,x}$ you must pass through all the $F_{v,x}^j$. However, for each $j$, if $\Bl_j$ happened then there is probability at least $\delta_2$ to not reach $F_{v,x}^{j+1}$, and this bound holds uniformly over whether these events happened for any $j'<j$ or not. So we get that the probability that none of them happened is at most $(1-\delta_2)^K<\frac18\eps$, by our definition of $K$. This shows \eqref{eq:B cap hit}.
Combining the last two estimates we get
\begin{equation}\label{eq:exists Bj}
\PP(\exists j:\neg\Bl_j)\geq \PP(o\xleftrightarrow{E_{w,v}\cup H_{v,x}} M_x\cap \{\exists j:\neg\Bl_j\})>1-\tfrac14\eps.
\end{equation}
The proof is thus finished. Let us retrace our steps to see this. In step III we proved that if $\{\exists j:\neg\Bl_j\}$ then the connection between $v$ and $x$ is good. Thus \eqref{eq:exists Bj} shows that the probability that the connection between $v$ and $x$ is good is at least $1-\frac14\eps$. Summing over $x\in\Xl$ gives that $v$ is good with probability at least $1-\eps$. We proved all this in $\Omega$, but as we explained in step II, the result we proved in $\Omega$ is equivalent to the result in $\ZZ^d$. Thus we get \eqref{eq:bigger pc cond Ei}. We get that $(G_i,X_i)$ satisfies \eqref{eq:good>p} and hence is a supercritical two-dimensional exploration process. This demonstrates clause (\ref{pg:GiXi}) from page \pageref{pg:GiXi}. As explained there, this gives that there
is an infinite cluster in the slab with positive probability (hence
with probability 1), and demonstrates that $p>p_{c}$. 
Since $p$ was an arbitrary value with $\theta(p)>0$, the theorem is proved.
\end{proof}

\section{Miscellenia}\label{sec:misc}

In this section we discuss various related conjectures, possible approaches and metamathematical remarks relating to conjecture \ref{conj:postFKG}, in no particular order.

\subsection{Montone cluster properties}
Let $G$ be a graph and let $f:G\times\{0,1\}^{E(G)}\to\RR$. We say that $f$ is a \emph{monotone cluster property} if it satisfies the following two requirements:
\begin{enumerate}
\item $f(v,\omega)$ depends only on $\Cl_\omega(v)$ the cluster of $v$ in $\omega$, and is increasing in it, i.e.\ if $\Cl_\omega(v)\subseteq\Cl_\xi(v)$ then $f(v,\omega)\le f(v,\xi)$. 
  \item if $\{v,w\}\in\omega$ then $f(v,\omega)=f(w,\omega)$.
\end{enumerate}
(Of course, (2) implies that in fact whenever $v\lr_\omega w$ then $f(v,\omega)=f(w,\omega)$). Examples of monotone cluster properties include $f(v,\omega)=|\Cl_\omega(v)|$ and, for a fixed vertex $b$, $f(v,\omega)=\mathbbm{1}\{b\lr_\omega v\}$.

As far as we know, the following far-reaching generalisation of conjecture \ref{conj:preFKG} might be true.
\begin{conj}\label{conj:mcp}For any graph $G$, any monotone cluster property $f$, any $A\subset G$ and any $0\in G$,
  \[\EE(f(0)\cdot \mathbbm{1}\{0\lr A\})\ge\min_{a\in A}\EE(f(a)\cdot\mathbbm{1}\{0\lr A\}).
  \]
\end{conj}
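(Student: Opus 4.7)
The plan is to reduce Conjecture \ref{conj:mcp} to the pre-FKG bound \eqref{eq:preFKGA} wherever possible, and in the remaining cases to imitate the proofs of \S\ref{sec:small A} with $f$ in place of $\mathbbm{1}\{\cdot\lr b\}$. Since $G$ is finite we may assume $f\ge 0$ and use the layer-cake decomposition
\[
f(v,\omega)=\int_0^\infty \mathbbm{1}\{f(v,\omega)>t\}\,dt.
\]
By Fubini it suffices to prove the conjecture for $f=\mathbbm{1}_{E(v)}$, where $E$ is an \emph{increasing cluster event}: $E(v,\omega)$ depends only on $\Cl_\omega(v)$, is upward-closed in it, and satisfies $E(v,\omega)=E(w,\omega)$ whenever $v\lr_\omega w$. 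Taking $E(v)=\{v\lr b\}$ for a fixed $b$ recovers exactly \eqref{eq:preFKGA}, so Conjecture \ref{conj:mcp} genuinely contains the pre-FKG conjecture and in particular cannot be strictly easier.

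For the natural subclass of weighted cluster sizes $f(v,\omega)=\sum_{u\in\Cl_\omega(v)} w(u)$ with $w\ge 0$, the conjecture follows from \eqref{eq:preFKGA} by a ghost-vertex argument: adjoin a new vertex $b$ and, for each $u\in G$, attach an edge $\{u,b\}$ open independently with probability $\eps w(u)$. Apply \eqref{eq:preFKGA} in the extended graph; to first order in $\eps$ one has
\[
\PP(0\lr b,\,0\lr A)=\eps\,\EE(f(0)\mathbbm{1}\{0\lr A\})+O(\eps^2),
\]
and analogously $\PP(0\lr A,\,a\lr b)=\eps\,\EE(f(a)\mathbbm{1}\{0\lr A\})+O(\eps^2)$ for every $a\in A$, with the $O(\eps^2)$ errors absorbing the (negligible) connections that use two ghost edges. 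Dividing by $\eps$ and letting $\eps\to 0$ yields the conjecture for this class. My first attempt for a general increasing cluster event $E$ would be to find an analogous gadget---an extension $\tilde G\supset G$ and a distinguished $b\in \tilde G\setminus G$ such that $\{v\lr_{\tilde G} b\}$ is probabilistically equivalent to $E(v)$ (at least to leading order in some parameter). Producing, or ruling out, such a gadget is, I expect, the main conceptual obstacle; the difficulty is that ``the cluster of $v$ contains a prescribed set $S$'' is an intersection of connection events and resists direct encoding as a single connection.

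Failing such a gadget, I would prove Conjecture \ref{conj:mcp} in the same regimes in which pre-FKG is known, by translating the existing proofs verbatim with $\mathbbm{1}_{E(a_j)}$ in place of $\mathbbm{1}_{\{a_j\lr b\}}$. For $|A|=2$, define $\phi(j)$, $\alpha$, $\beta$ as in \eqref{eq:A=2coeffs} and follow the proof of Theorem \ref{thm:A=2}: decompose according to $\{a_1\lr a_2\}$ versus $\{a_1\nlr a_2\}$, use property (2) to write $f(0)=f(a_j)$ on $\{0\lr a_j\}$, and apply BHK four times exactly as there. Each BHK step still works because $\mathbbm{1}_{E(a_j)}$ is, just like $\mathbbm{1}_{\{a_j\lr b\}}$, an increasing function defined on the cluster of $a_j$. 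The same line-by-line substitution yields the $|A|=3$ cases treated in \S\ref{sec:small A}, since those proofs use only BHK and Lemmas \ref{pitzutzimbainaim}--\ref{lem:phi}, which are insensitive to whether the ``target'' is a fixed vertex $b$ or a more general increasing cluster event. The main obstacle for the full conjecture is unavoidable: Conjecture \ref{conj:mcp} subsumes the pre-FKG conjecture, so any proof valid for all $A$ must in particular settle pre-FKG.
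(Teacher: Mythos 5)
You are addressing Conjecture \ref{conj:mcp}, and the first thing to say is that the paper does not prove it: it is stated as an open conjecture, supported only by the special cases of Theorems \ref{thm:mcp A=2}--\ref{thm:mcp small} (whose proofs are omitted precisely because they reuse the ideas of \S\ref{sec:simple cases}) and by the two-way link with conjecture \ref{conj:preFKG}. So there is no paper proof to compare against, and your proposal, as you acknowledge, is not a proof either. Within that frame, most of what you write is consistent with the paper: your observation that conjecture \ref{conj:mcp} subsumes pre-FKG is the paper's own remark; your ghost-vertex computation for weighted cluster sizes is essentially the paper's gadget (the one used to show that conjecture \ref{conj:mcp} for $f(g)=|\Cl(g)|$ implies conjecture \ref{conj:preFKG}) run in the opposite direction, and it is only conditional on the unproven \eqref{eq:preFKGA}; and your claim that the $|A|=2$ and antenna cases translate with $\mathbbm{1}_{E(a)}$ in place of $\mathbbm{1}\{a\lr b\}$ matches what the paper asserts when it omits those proofs.

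There is, however, one concrete logical gap: the layer-cake step. Knowing the conjecture for every increasing cluster event $\{f(\cdot)>t\}$ and integrating only gives
\[
\EE\big(f(0)\,\mathbbm{1}\{0\lr A\}\big)\;\ge\;\int_0^\infty\min_{a\in A}\EE\big(\mathbbm{1}\{f(a)>t\}\,\mathbbm{1}\{0\lr A\}\big)\,dt,
\]
and since the minimising $a$ may depend on $t$, the right-hand side is in general \emph{smaller} than $\min_{a\in A}\EE(f(a)\mathbbm{1}\{0\lr A\})$: the inequality $\int\min\le\min\int$ goes the wrong way, so it does not ``suffice by Fubini'' to treat indicators. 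The conjectured inequality is not stable under positive combinations of $f$'s because of the minimum on the right. The reduction does go through whenever one has convex coefficients $c_a$ that do not depend on the level $t$ --- this is exactly what the proof of theorem \ref{thm:A=2} provides via \eqref{eq:A=2coeffs}, and it is the content of the coefficients question in \S\ref{sec:coef} --- which is why the $|A|=2$ case for general $f$ works, but as a general-purpose reduction the Fubini step must be dropped or replaced by a coefficient statement of the type of \eqref{eq:coeffs}.
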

Conjecture \ref{conj:preFKG} follows from conjecture \ref{conj:mcp} by using the function $f(g)=\mathbbm{1}\{g\lr b\}$. Conjecture \ref{conj:mcp} can be proved in several simple cases.
\begin{thm}\label{thm:mcp A=2}Conjecture \ref{conj:mcp} holds when $|A|=2$, for any $f$.
\end{thm}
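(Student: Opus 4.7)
The plan is to run the proof of Theorem \ref{thm:A=2} almost verbatim, with $f(v)$ playing the role previously played by the indicator $\mathbbm{1}\{v \lr b\}$. In particular, define $\phi(1),\phi(2)$ exactly as in the proof of Theorem \ref{thm:A=2}, set $\alpha,\beta$ by \eqref{eq:A=2coeffs}, and aim to establish the stronger symmetric inequality
\[
\EE\bigl(f(0)\mathbbm{1}\{0\lr A\}\bigr) \;\ge\; \alpha\,\EE\bigl(f(a_1)\mathbbm{1}\{0\lr A\}\bigr)+\beta\,\EE\bigl(f(a_2)\mathbbm{1}\{0\lr A\}\bigr).
\]
If $\phi(1)+\phi(2)=0$ then $\PP(0\lr A,a_1\nlr a_2)=0$, so $0\nlr A$ almost surely (since on $\{0\lr A,a_1\lr a_2\}$ we also have $0\lr a_j$ for both $j$, contradicting $0\nlr A$); in that degenerate case both sides vanish and the inequality is trivial, so I may assume $\phi(1)+\phi(2)>0$.

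Next I would split $\{0\lr A\}$ into the three disjoint events
\[
E_1=\{0\lr a_1,\,a_1\nlr a_2\},\quad E_2=\{0\lr a_2,\,a_1\nlr a_2\},\quad E_{12}=\{0\lr a_1,\,0\lr a_2\}.
\]
On $E_{12}$, the vertices $0,a_1,a_2$ all lie in one cluster, so axiom (2) of a monotone cluster property gives $f(0)=f(a_1)=f(a_2)$; combined with $\alpha+\beta=1$ the $E_{12}$-contributions to the three expectations cancel identically. On $E_j$, $f(0)=f(a_j)$. A short rearrangement then reduces the claim to the single inequality
\[
L\coloneqq \beta\,\EE\bigl((f(a_1)-f(a_2))\mathbbm{1}_{E_1}\bigr)+\alpha\,\EE\bigl((f(a_2)-f(a_1))\mathbbm{1}_{E_2}\bigr)\;\ge\;0,
\]
precisely analogous to the quantity $L$ in the proof of Theorem \ref{thm:A=2}.

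To bound $L$ I would condition on $\{a_1\nlr a_2\}$ and apply BHK four times, in the same pattern as in Theorem \ref{thm:A=2}. For the products $f(a_1)\mathbbm{1}\{0\lr a_1\}$ the two factors are increasing functions of $\Cl(a_1)$, so same-cluster BHK yields a lower bound by the product of expectations; for the cross terms $f(a_2)\mathbbm{1}\{0\lr a_1\}$ the two factors live on the disjoint clusters of $a_2$ and $a_1$, so different-cluster BHK yields an upper bound by the product. Writing $\mu(j)\coloneqq\EE(f(a_j)\mid a_1\nlr a_2)$ and collecting terms gives
\[
L\;\ge\;\PP(a_1\nlr a_2)\bigl(\mu(1)-\mu(2)\bigr)\bigl(\beta\phi(1)-\alpha\phi(2)\bigr),
\]
and the definition of $\alpha,\beta$ makes the last factor identically zero, exactly as in Theorem \ref{thm:A=2}.

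The one point requiring care is that $f(a_j)$ is not assumed nonnegative, so one must check that BHK applies. But BHK is merely FKG for the conditional law of the cluster, and FKG holds for arbitrary integrable increasing functions (no sign hypothesis); on a finite graph integrability is automatic, so both BHK inequalities used above are valid without modification. This is the only place where the generalisation from indicators to real-valued $f$ needs to be justified, and it is immediate — so the argument truly is a line-by-line transcription of the proof of Theorem \ref{thm:A=2}.
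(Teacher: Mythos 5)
Your argument is correct and is exactly the adaptation of the proof of theorem \ref{thm:A=2} that the paper intends (the paper omits this proof precisely because it uses no ideas beyond \S\ref{sec:simple cases}): the decomposition into $E_1,E_2,E_{12}$ reproduces the quantity $L$ from that proof, the four applications of BHK conditioned on $\{a_1\nlr a_2\}$ are the same, and your point that no sign hypothesis on $f$ is needed is right, since on a finite graph the conditional covariance inequalities are unaffected by adding a constant to $f(a_j,\cdot)$ (equivalently, one decomposes it into level-set indicators, which are increasing events defined on the cluster of $a_j$). The only slip is in the degenerate case: $\phi(1)+\phi(2)=0$ does not imply $0\nlr A$ a.s.\ (take $a_1\lr a_2$ with probability $1$ and $0\lr a_1$ with positive probability), but the case is still immediate, because then $\{0\lr A\}$ is a.s.\ contained in $\{0\lr a_1,\,0\lr a_2\}$, where your own $E_{12}$ observation gives $\EE(f(0)\mathbbm{1}\{0\lr A\})=\EE(f(a_1)\mathbbm{1}\{0\lr A\})=\EE(f(a_2)\mathbbm{1}\{0\lr A\})$, so the conjectured inequality holds with equality.
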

\begin{thm}\label{thm:mcp antenna}Conjecture \ref{conj:mcp} holds in any $G$ in which 0 is an isolated vertex in $G\setminus A$, for any $f$.
\end{thm}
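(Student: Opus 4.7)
The plan is to imitate the proof of Theorem \ref{thm:diamond}, with the connection probability $\PP(v\lr b)$ replaced everywhere by the expectation $\EE(f(v))$ of the monotone cluster property at $v$. The heart of the argument is a generalisation of Lemma \ref{lem:eps}: under the hypothesis
\[
\EE_{G\setminus\{0\}}(f(a)) \le \EE_{G\setminus\{0\}}(f(v))
\]
for two neighbours $a,v$ of $0$, and for any set $B$ of neighbours of $0$ with $v\in B$, one has $\EE(f(a)\cdot\mathbbm{1}\sigma_B) \le \EE(f(0)\cdot\mathbbm{1}\sigma_B)$, where $\sigma_B$ is the event from Lemma \ref{lem:eps}.

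To prove this I would follow the scheme of Lemma \ref{lem:eps} combined with Lemma \ref{lemma; compare}(i). Introduce the auxiliary graph $H$ in which each edge from $0$ to $B$ is open with probability $\eps$ and all other edges from $0$ are closed. Since $G$ is finite, $f$ is bounded, say by $M$, and decomposing $H$-percolation according to which edges from $0$ are open gives
\[
\EE_H(f(a)) \le \EE_H(f(v)) + M\eps|B|
\]
(the term with no $0$-edges open reduces to percolation on $G\setminus\{0\}$ and uses the hypothesis; the remaining terms contribute at most $M\eps|B|$). The delicate step is to transfer this slack through the conditioning on $E:=\{\text{all edges from $0$ to $B$ are open}\}$, an increasing event in the cluster of $v$. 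Since $f(a)=f(v)$ on $\{a\lr v\}$, subtracting the identically zero quantity $\EE_H((f(a)-f(v))\mathbbm{1}\{a\lr v\})$ produces the same inequality with an extra indicator $\mathbbm{1}\{a\nlr v\}$; dividing by $\PP_H(a\nlr v)$ and applying BHK twice (once to $f(a)$ and $\mathbbm{1}_E$ as increasing functions on the disjoint clusters of $a$ and $v$, once to $f(v)$ and $\mathbbm{1}_E$ as both increasing on the cluster of $v$, so the inequality reverses) yields
\[
\EE_H(f(a)\mid E,a\nlr v) - \EE_H(f(v)\mid E,a\nlr v) \le \frac{M\eps|B|}{\PP_H(a\nlr v)}.
\]
Multiplying by $\PP_H(a\nlr v\mid E)$ and using FKG (increasing $E$ against decreasing $\{a\nlr v\}$) to bound $\PP_H(a\nlr v\mid E)\le\PP_H(a\nlr v)$, then restoring the $\{a\lr v\}$ part, gives $\EE_H(f(a)\mid E)\le\EE_H(f(v)\mid E)+M\eps|B|$. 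Now conditioning on $E$ in $H$ is identical to conditioning on $\sigma_B$ in $G$, so both conditional expectations are independent of $\eps$; letting $\eps\to 0$ and multiplying by $\PP(\sigma_B)$ yields $\EE(f(a)\mathbbm{1}\sigma_B)\le\EE(f(v)\mathbbm{1}\sigma_B)$. Finally, axiom (2) of a monotone cluster property gives $f(v)=f(0)$ on $\sigma_B$ (as $0\lr v$ through the open edge in $B$), completing the generalised lemma.

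Given the lemma, the theorem is immediate, in direct parallel to Theorem \ref{thm:diamond}. Let $N(0)$ be the neighbours of $0$; the hypothesis that $0$ is isolated in $G\setminus A$ gives $N(0)\subseteq A$. Choose $a_0\in N(0)$ minimising $\EE_{G\setminus\{0\}}(f(\cdot))$ over $N(0)$. For every non-empty $B\subseteq N(0)$ apply the generalised lemma with $a=a_0$ and an arbitrary $v\in B$ to get $\EE(f(a_0)\mathbbm{1}\sigma_B)\le\EE(f(0)\mathbbm{1}\sigma_B)$. The events $\sigma_B$ (for non-empty $B\subseteq N(0)$) partition $\{0\lr A\}$, so summing and using $a_0\in A$ gives
\[
\min_{a\in A}\EE(f(a)\mathbbm{1}\{0\lr A\}) \le \EE(f(a_0)\mathbbm{1}\{0\lr A\}) \le \EE(f(0)\mathbbm{1}\{0\lr A\}),
\]
which is the conclusion of Conjecture \ref{conj:mcp}.

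The main obstacle is preserving the linear-in-$\eps$ slack through the conditioning on $E$: a naive single BHK application against $\mathbbm{1}_E$ would only give a slack of order $\eps/\PP(E)=\eps^{1-|B|}$, which diverges as $\eps\to 0$. The careful BHK+FKG sandwich inherited from Lemma \ref{lemma; compare}(i), exploiting the cancellation of $f(a)-f(v)$ on $\{a\lr v\}$ before conditioning, is what keeps the slack bounded.
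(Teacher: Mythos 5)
Your proof is correct and is precisely the argument the paper intends: the authors omit the proof of this theorem with the remark that it contains no ideas beyond \S\ref{sec:simple cases}, and your generalisation of lemma \ref{lem:eps} to monotone cluster properties (reproducing the BHK--FKG sandwich of lemma \ref{lemma; compare}(i) with $f(a),f(v)$ in place of the connection indicators, and using property (2) of $f$ to cancel on $\{a\lr v\}$ and to identify $f(v)=f(0)$ on $\sigma_B$), followed by the decomposition of $\{0\lr A\}$ into the events $\sigma_B$ exactly as in theorem \ref{thm:diamond}, is that argument. The only points you pass over silently are the degenerate cases already waved at in the paper's own lemma \ref{lem:eps} ($a\in B$, $\PP(\sigma_B)=0$, or $a\lr v$ with probability one, where the claim holds trivially or with equality), so they do not constitute a gap.
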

\begin{thm}\label{thm:mcp small}Conjecture \ref{conj:mcp} holds for the functions $f(g)=\mathbbm{1}\{|\Cl(g)|\ge k\}$ for $k\le 4$, for any $G$ and $A$.
\end{thm}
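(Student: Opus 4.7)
The plan is to reduce immediately to the case $0 \notin A$ (otherwise choose $a = 0$ in the minimum, which makes the inequality trivial), and then treat $k \in \{1,2,3,4\}$ in increasing order of difficulty. For $k=1$ the statement is vacuous: both sides equal $\PP(0 \lr A)$. For $k=2$, I would use that on $\{0 \lr A\}$ with $0 \notin A$ every realisation has $|\Cl(0)| \ge 2$ automatically, because any open path from $0$ to $A$ uses at least one edge at $0$; consequently $\EE(f_2(0)\mathbbm{1}_{0\lr A}) = \PP(0 \lr A)$, which majorises every $\EE(f_2(a)\mathbbm{1}_{0\lr A})$.

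For $k \in \{3, 4\}$ I pass to the complementary form. Writing $m = k-1 \in \{2, 3\}$, the goal becomes finding $a_0 \in A$ with $\PP(|\Cl(a_0)| \le m,\,0\lr A) \ge \PP(|\Cl(0)| \le m,\,0 \lr A)$. Since $0 \notin A$, the right-hand side is a finite disjoint sum over connected subsets $S \ni 0$ with $|S| \le m$ and $S \cap A \ne \emptyset$. For $k=3$ only the shape $S=\{0,a\}$, $a\in A$, arises, producing $\sum_{a\in A}\PP(\Cl(0)=\{0,a\})$; for $k=4$ one additionally collects size-three shapes $\{0,a,v\}$ (with sub-cases depending on whether $v\in A$ and on which two edges span the cluster) and the triangle case where all three edges of the induced subgraph on $\Cl(0)$ are open. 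For each such shape I would match it against a corresponding small-cluster event for $a_0$: the diagonal piece $\PP(\Cl(a_0)=\{0,a_0\})$ already equals $\PP(\Cl(0)=\{0,a_0\})$, while the off-diagonal contributions $\PP(\Cl(a_0)=\{a_0\},\,0\lr A\setminus\{a_0\})$ and $\PP(\Cl(a_0)=\{a_0,v\},\,0\lr A\setminus\{a_0\})$ with $v\ne 0$ are meant to absorb the residual $\sum_{a\ne a_0}\PP(\Cl(0)=\{0,a\})$. The van den Berg--H\"aggstr\"om--Kahn inequality, applied after conditioning on $\{0\nlr a_0\}$, decouples the clusters of $0$ and $a_0$ so that the small-cluster-at-$a_0$ event and the connectivity event $\{0 \lr A\setminus\{a_0\}\}$ factorise with the correct sign, reducing the comparison to simpler cluster-size probabilities on residual graphs.

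The main obstacle I anticipate is the choice of $a_0$ and making the shape-by-shape matching work uniformly. The naive inclusion $\{\Cl(0)=\{0,a\}\}\subseteq\{|\Cl(a)|\le k-1,\,0\lr A\}$ gives only one diagonal term per $a$; summing across $a\in A$ and averaging loses a factor of $|A|$. Recovering that factor forces $a_0$ to be chosen so that the `$a_0$ isolated' and `$a_0$ in a two-cluster' events carry enough extra mass to cover the off-diagonal residual. The natural candidate is the maximiser of $\PP(|\Cl(a)|\le k-1,\,0\lr A)$ over $a\in A$ itself; the content of the proof is that this maximum is at least $\PP(|\Cl(0)|\le k-1,\,0\lr A)$, and the BHK step then reduces the verification to tractable inequalities among cluster-size probabilities in residual graphs $G\setminus\{a_0\}$ and $G\setminus\{a_0,v\}$. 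It is precisely the proliferation of small-cluster shapes that confines the argument to $k \le 4$: for $k \ge 5$ the enumeration would explode and the shape-by-shape matching would no longer close, consistent with the restriction in the statement.
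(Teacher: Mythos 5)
Your treatment of $k\le 2$ and the reductions (to $0\notin A$ and to the complementary form with $m=k-1$) are correct, but for $k\in\{3,4\}$ --- which is the entire content of the theorem --- the argument is never actually carried out. The ``absorption'' of the off-diagonal mass $\sum_{a\ne a_0}\PP(\Cl(0)=\{0,a\})$ by small-cluster events at $a_0$ is exactly what has to be proved, the maximising choice of $a_0$ is never used for anything, and your closing sentence that ``the content of the proof is that this maximum is at least $\PP(|\Cl(0)|\le k-1,\,0\lr A)$'' is a restatement of the goal, not an argument. Moreover, the one concrete mechanism you name does not close. For $k=3$ and $0\notin A$, after the diagonal term $\PP(\Cl(0)=\{0,a_0\})$ cancels on both sides, what you must show is
\[
\PP\big(|\Cl(a_0)|\le 2,\;0\lr A\setminus\{a_0\}\big)\;\ge\;\PP\big(|\Cl(0)|\le 2,\;0\lr A\setminus\{a_0\}\big)
\]
for some $a_0\in A$. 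Both events force $0\nlr a_0$, and conditioning on $\{0\nlr a_0\}$ and applying BHK in the two directions you describe leaves you, after the common factor $\PP(0\lr A\setminus\{a_0\}\,|\,0\nlr a_0)$ cancels, with the requirement $\PP(|\Cl(a_0)|\le 2\,|\,0\nlr a_0)\ge\PP(|\Cl(0)|\le 2\,|\,0\nlr a_0)$. This can fail for \emph{every} $a_0\in A$: let $A$ span a triangle of probability-one edges and let $0$ have a single neighbour $u\notin A$ (joined to $A$ or not); then the left-hand side is $0$ for every $a_0$ while the right-hand side is positive, and the displayed inequality survives only because its right-hand side happens to vanish in this example. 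So decoupling via BHK after conditioning on $\{0\nlr a_0\}$ discards precisely the information that makes the statement true (on $\{\Cl(0)=\{0,a\}\}$ every edge at $a$ other than $\{0,a\}$ is closed, which frees the neighbourhood of $a$), and the promised ``tractable inequalities among cluster-size probabilities in residual graphs'' are neither stated nor, in the only reading your reduction supports, true. The same gap recurs, with more shapes, for $k=4$.

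For comparison: the paper omits its proof of this theorem, saying only that it uses the ideas of \S\ref{sec:simple cases}. Those ideas --- lemma \ref{pitzutzimbainaim}, lemma \ref{lem:phi}, and especially the convex-combination trick with the coefficients \eqref{eq:A=2coeffs} in theorem \ref{thm:A=2} --- point to weighting \emph{all} $a\in A$ simultaneously with $\phi$-type coefficients and applying BHK to events recording the exact pattern $\Cl(0)\cap A$, which is precisely how one recovers the factor of $|A|$ that you yourself identify as the obstacle to a single fixed $a_0$. As written, your proposal establishes the theorem only for $k\le 2$.
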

We omit the proof of these theorems, as they do not contain any ideas which we did not already use in 
\S\ref{sec:simple cases}. 

Finally we note an interesting connection between different monotone cluster properties in conjecture \ref{conj:mcp}.
\begin{thm} Assume conjecture \ref{conj:mcp} holds for the function $f(g)=|\Cl(g)|$. Then conjecutre \ref{conj:preFKG} holds.
\end{thm}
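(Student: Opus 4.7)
The plan is to upgrade the hypothesised cluster-size instance of conjecture \ref{conj:mcp} to the single-target statement \eqref{eq:preFKG} via an amplification trick: by expanding $|\Cl(v)| = \sum_x \mathbbm{1}\{v\lr x\}$, the hypothesis already reads as \eqref{eq:preFKGA} summed over all target vertices $x$; to isolate one particular $b$ I will attach many pendants at $b$ so that its contribution dominates, and then divide by the multiplicity and pass to the limit.

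Concretely, I will first invoke the remark immediately after \eqref{eq:preFKGA} to reduce the goal to proving \eqref{eq:preFKGA}. Fix $G$, $A$, $0$, $b$. For each integer $N \ge 1$ form the auxiliary graph $G_N$ by attaching $N$ new leaves $w_1, \dots, w_N$ to $b$, each connected by an edge of probability $1$ (the original edge probabilities of $G$ are unchanged). Since the new edges are deterministic and each $w_i$ has $b$ as its sole neighbour, one reads off
\[
|\Cl_{G_N}(v)| \;=\; |\Cl_G(v)| + N \cdot \mathbbm{1}\{v \lr_G b\} \qquad (v \in G),
\]
while the event $\{0 \lr A\}$ in $G_N$ coincides with its counterpart in $G$. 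Applying the hypothesised instance of conjecture \ref{conj:mcp} (with $f=|\Cl(\cdot)|$) inside $G_N$, taking expectations and inserting the identity above yields
\[
\EE_G(|\Cl(0)| \mathbbm{1}_{\{0\lr A\}}) + N \PP(0 \lr b,\,0 \lr A) \;\ge\; \min_{a \in A}\!\left(\EE_G(|\Cl(a)| \mathbbm{1}_{\{0\lr A\}}) + N \PP(a \lr b,\,0 \lr A)\right).
\]
Dividing by $N$ and letting $N \to \infty$, the bounded expectations wash out and the minimum over a finite set is continuous in its arguments, so we obtain
\[
\PP(0 \lr b,\, 0 \lr A) \;\ge\; \min_{a \in A}\PP(a \lr b,\, 0 \lr A),
\]
which is exactly \eqref{eq:preFKGA}.

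There is essentially no hard step; the only point requiring a word of justification is that conjecture \ref{conj:mcp} is meant to hold for arbitrary edge probabilities, so in particular for deterministic edges of probability $1$, and that $|\Cl(\cdot)|$ is manifestly a monotone cluster property in the sense of \S\ref{sec:misc}. A reader uncomfortable with probability $1$ may equivalently condition on the event that all $N$ pendant edges are open (after giving them any fixed $p\in(0,1)$) without changing the conclusion.
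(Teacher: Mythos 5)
Your proof is correct and is essentially the paper's own argument: the paper likewise attaches $k$ pendant vertices to $b$ by probability-$1$ edges, applies the cluster-size instance of conjecture \ref{conj:mcp} in the augmented graph, divides by $k$ and lets $k\to\infty$. Your only (harmless) refinement is making explicit the exact identity $|\Cl_{G_N}(v)|=|\Cl_G(v)|+N\mathbbm{1}\{v\lr b\}$ and noting that the limit gives the formally stronger inequality \eqref{eq:preFKGA}.
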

\begin{proof}[Proof sketch] Let $G$ be a graph, $A\subset G$ and $0,b\in G$. For any $k$ create an auxiliary graph $H_k$ by adding to $G$ $k$ additional vertices and connecting them all to $b$ with edges with weight 1 (and not connecting them to anything else). As $k$ becomes large the size of the cluster becomes a very good approximation of $k\mathbbm{1}\{g\lr b\}$, so using conjecture \ref{conj:mcp} for the given $f$, dividing by $k$ and then taking $k$ to $\infty$ proves conjecture \ref{conj:preFKG}.
\end{proof}
\subsection{Coefficients}\label{sec:coef}
The proof of theorem \ref{thm:A=2} has the curious property that it gives nonnegative coefficients $c_a$ with $\sum c_a=1$ and
\begin{equation}\label{eq:coeffs}
\PP(0\lr b)\ge\sum_{a\in A} c_a\PP(0\lr A, a\lr b),
\end{equation}
and these coefficients do not depend on $b$. It is natural to ask if this property is more general:
\begin{quest}Is it true that for any graph $G$, any $A\subset G$ and any $0\in G$ there are nonnegative coefficients $(c_a:a\in A)$ with $\sum c_a=1$ such that \eqref{eq:coeffs} holds for all $b \in G$?
\end{quest}
If true, this will, of course, imply conjecture \ref{conj:preFKG}. We will now sketch an approach for calculating coefficients of larger $A$ which did not pan out, but we find intriguing. Let us start by considering a random walk analogue.

Assume we are talking about a transient graph (or a graph with a sink, i.e.\ a vertex to which you can go but can never exit). Then the analogue of connecting to $b$ is the expected number of times that a random walk hits $b$ (ever, in the transient case, or until falling into the sink, in the case there is one). Denote this quantity by $G(a,b)$. Thus the question above is approximately analogous to asking whether it is true that
\[
G(0,b)\ge \sum_{a\in A}c_aG(a,b).
\]
The answer is, of course, yes, with $c_a$ being the probability to hit $A$ in $a$.

We now recall a well-known formula that allows to calculate the hitting probabilities $c_a$ using knowledge of $G(0,a)$ and $G(a,a')$ for $a,a'\in A$. Indeed, decomposing $G(0,a)$ according to the first point in $A$ hit along the curve we get
\[
G(0,a)=\sum_{i\in A}c_iG(i,a).
\]
This gives a system of linear equations for the $c_i$.

Going back to percolation, we have one point which we find hard to justify and that is that the analogue of $G(v,w)$ is not the classic $\PP(v\lr w)$ but rather $\PP(0\lr A,v\lr w)$. 
\begin{thm}Let $G$ be a graph and let $A\subset G$ with $|A|=3$. Assume the equations
  \[
  \sum_{a\in A} c_a\PP(0\lr A,a \lr a')=\PP(0\lr a')\qquad\forall a'\in A
  \]
  have a unique solution. Then this solution satisfies $c_a\ge 0$ for all $a\in A$ and $\sum c_a\ge 1$.
\end{thm}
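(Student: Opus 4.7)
The plan is to divide each equation by $P := \PP(0 \lr A)$ (positivity of $P$ follows from the uniqueness hypothesis: otherwise one has either no solution or infinitely many) and work with the conditional quantities $p_i := \PP(0 \lr a_i \mid 0 \lr A)$ and $q_{ij} := \PP(a_i \lr a_j \mid 0 \lr A)$. The system becomes $Qc = p$ with $Q$ the symmetric $3\times 3$ matrix with $Q_{ii} = 1$ and $Q_{ij} = q_{ij}$ off-diagonal. Because $\{0 \lr A\}$ is increasing, the conditional measure $\PP(\cdot \mid 0 \lr A)$ still satisfies FKG, and everything that follows uses two ingredients it immediately provides: the \emph{triangle bound} $q_{ij} q_{ik} \le q_{jk}$ (FKG yields $q_{ik} q_{jk} \le \PP(a_i \lr a_k,\, a_j \lr a_k \mid 0 \lr A)$, and that event is contained in $\{a_i \lr a_j\}$) and the \emph{inclusion--exclusion bound} $q_{ij} + q_{ik} - q_{jk} \le 1$.

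Cramer's rule gives $c_i = \det Q_i / \det Q$, where $Q_i$ is $Q$ with its $i$-th column replaced by $p$. First, $\det Q = 1 - \sum q_{ij}^2 + 2 q_{12} q_{13} q_{23} \ge (1 - q_{12}^2)(1 - q_{13}^2) \ge 0$ via the triangle bound and AM--GM (applied to $q_{23}$ and $q_{12}q_{13}$); strictness follows from the uniqueness hypothesis. Next, for $c_1 \ge 0$, expanding $\det Q_1$ reduces the claim to
\[
p_1(1 - q_{23}^2) \ge p_2(q_{12} - q_{13} q_{23}) + p_3(q_{13} - q_{12} q_{23}),
\]
with both right-hand coefficients non-negative by the triangle bound. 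I would prove this by expanding both sides in the probability atoms $\pi_S := \PP(\Cl(0) \cap A = S \mid 0 \lr A)$ for non-empty $S \subseteq A$ and $\nu_k := \PP(\Cl(0) \cap A = \{a_k\},\, a_i \lr a_j \mid 0 \lr A)$ (with $\{i,j\}=A\setminus\{a_k\}$), and matching the resulting monomials via BHK applied to the cluster of $\{a_j, a_k\}$ conditioned on $\{a_j, a_k\} \nlr \Cl(0)$, in the spirit of theorems~\ref{thm:A=2} and~\ref{thm:A=3}.

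For $\sum c_a \ge 1$, set $y := Q^{-1}\mathbf{1}$, so $\sum c_a = \mathbf{1}^T Q^{-1} p = y \cdot p$. A second Cramer calculation yields $y_i = (1 - q_{jk})(1 + q_{jk} - q_{ij} - q_{ik})/\det Q \ge 0$, the second factor being non-negative by inclusion--exclusion. The remaining inequality $y \cdot p \ge 1$ is equivalent to
\[
\sum_i p_i(1 - q_{jk})(1 + q_{jk} - q_{ij} - q_{ik}) \ge \det Q,
\]
which I would verify by the same atom expansion, crucially using $\sum_i p_i = \EE[\,|\Cl(0) \cap A|\, \mid\, 0 \lr A\,] \ge 1$ since the conditioning event forces $|\Cl(0) \cap A| \ge 1$. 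The main obstacle, as above, is the combinatorial bookkeeping required to verify the two polynomial inequalities in terms of $\pi_S,\nu_k$: although each individual BHK application is routine, tracking all ten cluster partitions of $\{0, a_1, a_2, a_3\}$ (and of the auxiliary events $a_i \lr a_j$ off the cluster of $0$) is unpleasant, and a cleaner argument that avoids the explicit expansion may well exist.
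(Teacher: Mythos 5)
The paper deliberately omits its own proof of this theorem (it says only that the proof is ``long''), so there is nothing to compare against; judged on its own, your proposal has two concrete problems before one even reaches the deferred computations. First, the load-bearing claim that $\PP(\cdot\,|\,0\lr A)$ ``still satisfies FKG'' because $\{0\lr A\}$ is increasing is false: conditioning a product measure on an increasing event does not preserve positive association. Concretely, if $0$ has exactly two edges, one to $a_1$ and one to $a_2$, each open with probability $\tfrac12$, then conditionally on $\{0\lr A\}$ each event $\{0\lr a_i\}$ has probability $\tfrac23$ while their intersection has probability $\tfrac13<\tfrac49$; this negative correlation is exactly what conditioning on $\{0\lr A\}$ creates. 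Hence your ``triangle bound'' $q_{ij}q_{ik}\le q_{jk}$ is unjustified --- it is itself a nontrivial correlation inequality of the same flavour as the paper's conjectures --- and with it the nonnegativity of the quantities $q_{12}-q_{13}q_{23}$, $q_{13}-q_{12}q_{23}$, etc. Only the inclusion--exclusion bound survives, since it uses nothing beyond event containment, and so only the sign of the factors $(1+q_{jk}-q_{ij}-q_{ik})$ is actually established. Second, even granting the triangle bound, the displayed estimate $\det Q\ge(1-q_{12}^2)(1-q_{13}^2)$ is backwards: the difference of the two sides equals $-(q_{23}-q_{12}q_{13})^2\le 0$. (The conclusion $\det Q\ge0$ is still true, e.g.\ via the positive semidefiniteness of $\big(\PP(0\lr A,a\lr a')\big)_{a,a'}$ that the paper extracts from the Aizenman--Newman argument, which together with the uniqueness hypothesis gives $\det Q>0$; but it does not follow by your chain of inequalities.)

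More fundamentally, the two inequalities that carry the entire content of the theorem --- $\det Q_1\ge0$ (i.e.\ $c_1\ge0$) and $y\cdot p\ge1$ --- are not proved: you describe an intended expansion into the atoms $\pi_S,\nu_k$ with ``monomial matching via BHK'' and explicitly defer the bookkeeping. Note that BHK conditions on non-connection (decreasing) events, while all your quantities are conditioned on the increasing event $\{0\lr A\}$, so it is not clear that the planned applications line up at all; and $\sum_i p_i\ge1$ together with the constraints on the $q_{jk}$ certainly does not by itself imply $y\cdot p\ge1$, since the $p_i$ and $q_{jk}$ are tied by further probabilistic relations that the expansion would have to exploit. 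This deferred verification is precisely the ``long'' part that the paper declines to print, so as it stands your text is a sensible linear-algebra reduction plus a plan, not a proof.
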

We skip the proof of this theorem as it is long and holds only for $|A|\le 3$. For $|A|\ge 4$ it is possible for some of the $c_i$ to be negative. We only note that it gives an alternative proof of the case $|A|=3$ and $b\in A$ (which is covered by theorem \ref{thm:A=3} (and, in fact, can be easily deduced also from theorem \ref{thm:A=2}). Indeed, assume $b=a_1$. Then
\begin{align*}
\PP(0\lr b)&=\PP(0\lr a_1) = \sum_{a\in A}c_a\PP(0\lr A,a\lr a_1)\\
&\ge \Big(\sum_{a\in A}c_a\Big)\min_{a\in A}\PP(0\lr A,a\lr b)\ge
\min_{a\in A}\PP(0\lr A,a\lr b),
\end{align*}
as needed.

Let us also remark on the question of singularity of this system of equations. Indeed, the classic argument of Aizenman and Newman (see \cite[lemma 3.3]{AN84}) may be adapted to the matrix $\big(\PP(0\lr A,a\lr a')\big)_{a,a'\in A}$ to show that it is always nonnegatively defined, and further that it is positively defined if and only if $\PP(0\lr A)>0$ and $\PP(a\lr a')<1$ for all $a\ne a'$ in $A$. Since the cases where these restrictions are not met are of little interest, we see that the assumption of having a unique solution is harmless.

\subsection{An inductive approach}
\begin{defn*}For a graph $G$ and an edge $e$ denote by $G\Ground e$ the graph where the edge $e$ was shortened. Equivalently, $G \Ground e$ is the graph $G$ with the probability that $e$ is open set to 1.
\end{defn*}
\begin{conj}\label{conj:gluing}Let $G$ be a graph, let $A\subset G$ and $0,b\in G$. Denote by $a$ be the minimiser of $\PP(a\lr b)$ among the points of $A$. Let $\{v,w\}$ be an arbitrary edge and assume
  \begin{equation}\label{eq:works both sides}
  \PP_G(x\lr b)\ge \PP_G(x\lr A)\PP_G(a\lr b)\qquad \forall x\in\{v,w\}.
  \end{equation}
  Then
  \begin{equation}\label{eq:glued}
  \PP_{G\Ground \{v,w\}}(v\lr b)\ge\PP_{G\Ground \{v,w\}}(v\lr A)\PP_{G\Ground \{v,w\}}(a\lr b).
  \end{equation}
\end{conj}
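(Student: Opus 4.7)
The plan is to decompose percolation on $G$ over the status of the edge $e=\{v,w\}$. Let $p$ be the probability of $e$, and set $\PP_1:=\PP_{G\Ground e}$ and $\PP_0:=\PP_{G\setminus e}$, so that $\PP_G=p\PP_1+(1-p)\PP_0$. The key identity to exploit is
\[
\PP_G\bigl((v\lr y)\cup(w\lr y)\bigr)=\PP_1(v\lr y)
\]
for any target $y$ (or target set): conditional on $e$ open the two sides are tautologically equal since $v=w$ in the glued graph, and conditional on $e$ closed inclusion-exclusion gives $\PP_0(v\lr y)+\PP_0(w\lr y)-\PP_0(v\lr y,\,w\lr y)=\PP_1(v\lr y)$, because $v$ and $w$ inhabit disjoint clusters in $G\setminus e$ precisely on the event $\{v\nlr w\}$.

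Using this identity together with $\PP(X)+\PP(Y)=\PP(X\cup Y)+\PP(X\cap Y)$, summing the two hypotheses \eqref{eq:works both sides} for $x=v$ and $x=w$ can be rewritten as
\[
\PP_1(v\lr b)+\PP_G(v\lr b,\,w\lr b)\ge\bigl[\PP_1(v\lr A)+\PP_G(v\lr A,\,w\lr A)\bigr]\PP_G(a\lr b).
\]
Hence the desired \eqref{eq:glued} reduces to the single residual inequality
\[
\PP_G(v\lr A,\,w\lr A)\PP_G(a\lr b)-\PP_G(v\lr b,\,w\lr b)\ge \PP_1(v\lr A)\bigl[\PP_1(a\lr b)-\PP_G(a\lr b)\bigr].
\]
The right-hand side equals $(1-p)\PP_1(v\lr A)\bigl[\PP_1(a\lr b)-\PP_0(a\lr b)\bigr]$, and $\PP_1(a\lr b)-\PP_0(a\lr b)$ is itself expressible, by inclusion-exclusion, as a combination of $\PP_0$-events of the shape $\{a\lr\{v,w\},\,\{v,w\}\lr b\}$ minus their overlaps with $\{a\lr b\}$. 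The plan is to match these against the left-hand side using BHK.

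The main obstacle is the mismatch between the two sides on the event $\{v\lr w\}$ inside $G\setminus e$. On the complementary event $\{v\nlr w\}$ the clusters of $v$ and $w$ are disjoint, BHK provides a right-signed bound $\PP_G(v\lr b,\,w\lr b\mid v\nlr w)\le\PP_G(v\lr b\mid v\nlr w)\PP_G(w\lr b\mid v\nlr w)$ together with its counterpart for $A$, and the residual inequality should reduce to a convex combination of the two hypotheses with weights reminiscent of the $\alpha,\beta$ of \eqref{eq:A=2coeffs}. On $\{v\lr w\}$, however, the joint events degenerate (since $v\lr b$ and $w\lr b$ become identical, and similarly for $A$) so BHK becomes trivial and the left-hand side collapses to a single one-arm probability. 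Controlling the contribution from this event seems to require a separate argument, plausibly an inductive invocation of Conjecture \ref{conj:preFKG} inside the subgraph carrying the common cluster of $v$ and $w$. It is this last step that I expect to be the most delicate, and it is also the step that likely explains why the authors leave the statement as a conjecture.
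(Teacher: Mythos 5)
The statement you were asked to prove is Conjecture \ref{conj:gluing}: the paper offers no proof of it. It only proves Lemma \ref{lem:gluing} (that this conjecture implies Conjecture \ref{conj:postFKG}) and, in \S\ref{sec:misc}, a theorem showing that the variant in which $a$ is an arbitrary point satisfying \eqref{eq:works both sides} rather than the minimiser is false. So there is no paper proof to compare against, and a complete argument would be new mathematics rather than a reconstruction.

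As a proof attempt, yours has a genuine gap, and in fact two. First, it is openly incomplete: the contribution of the event $\{v\lr w\}$ in $G\setminus e$ is deferred to ``an inductive invocation of Conjecture \ref{conj:preFKG}'', which is itself an open conjecture of the paper, and no details are given. Second, and more concretely, the reduction you do carry out is already too lossy to be usable: the ``residual inequality''
\[
\PP_G(v\lr A,\,w\lr A)\PP_G(a\lr b)-\PP_G(v\lr b,\,w\lr b)\ \ge\ \PP_{G\Ground\{v,w\}}(v\lr A)\bigl[\PP_{G\Ground\{v,w\}}(a\lr b)-\PP_G(a\lr b)\bigr]
\]
is false in general. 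Its right-hand side is always nonnegative (gluing an edge only increases connection probabilities), while its left-hand side can be made very negative while keeping \eqref{eq:works both sides} valid: attach $A=\{a\}$ to the rest of the graph by a single edge of probability $\eps$, and join $b$ to both $v$ and $w$ by edges of probability close to $1$; then $\PP_G(x\lr b)\approx 1$ for $x\in\{v,w\}$, so \eqref{eq:works both sides} holds, but $\PP_G(v\lr A,w\lr A)\le\eps$ while $\PP_G(v\lr b,w\lr b)$ is close to $1$. (In this example \eqref{eq:glued} itself is trivially true, which shows precisely that summing the two hypotheses discards the information that makes the conjecture plausible.) A symptom of the same problem is that your argument never uses that $a$ is the minimiser of $\PP(a\lr b)$ over $A$; the counterexample theorem at the end of \S\ref{sec:misc} is exactly a warning that gluing statements of this kind fail when that minimality is not exploited. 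Your preliminary identity $\PP_G\bigl((v\lr y)\cup(w\lr y)\bigr)=\PP_{G\Ground\{v,w\}}(v\lr y)$ and the bookkeeping up to the residual inequality are correct, but they do not put you on a path that can close.
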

\begin{lem}\label{lem:gluing}Conjecture \ref{conj:gluing} implies the post-FKG conjecture, conjecture \ref{conj:postFKG}.
\end{lem}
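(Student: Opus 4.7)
The plan is to prove Conjecture \ref{conj:postFKG} by strong induction on $|E(G)|$. The base case $|E(G)|=0$ is immediate: all connection events satisfy $\PP(x\lr y)=\mathbbm{1}\{x=y\}$, and the inequality reduces to a short case split. For the inductive step, edges with $p_e=0$ can be deleted and edges with $p_e=1$ shortened, both strictly reducing the edge count without altering any connection probability, so we may assume every $p_e\in(0,1)$.

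Fix any edge $e=\{v,w\}$, let $G(t)$ denote $G$ with $p_e$ replaced by $t$, and let $a_G\in A$ be a minimiser of $\PP_G(\cdot\lr b)$. By multilinearity each of the probabilities $\PP_{G(t)}(0\lr b)$, $\PP_{G(t)}(0\lr A)$, $\PP_{G(t)}(a_G\lr b)$ is affine in $t$, so
\[
F(t):=\PP_{G(t)}(0\lr b)-\PP_{G(t)}(0\lr A)\cdot\PP_{G(t)}(a_G\lr b)
\]
is a quadratic in $t$ whose coefficient of $t^2$ computes to
\[
-\bigl(\PP_{G\Ground e}(0\lr A)-\PP_{G\setminus e}(0\lr A)\bigr)\bigl(\PP_{G\Ground e}(a_G\lr b)-\PP_{G\setminus e}(a_G\lr b)\bigr),
\]
which is nonpositive since both parenthesised factors are nonnegative by monotonicity of percolation. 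Thus $F$ is concave on $[0,1]$, and $F(p_e)\ge\min\{F(0),F(1)\}$, so it suffices to prove $F(0)\ge 0$ and $F(1)\ge 0$. These are the post-FKG inequality at $0$ in $G\setminus e$ and in $G\Ground e$, referenced against the \emph{original} $G$-minimiser $a_G$.

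The induction hypothesis provides post-FKG in both smaller graphs, but only with respect to their local minimisers $a^-$ and $a^*$, which need not equal $a_G$; this minimiser mismatch is the core obstacle. Conjecture \ref{conj:gluing} is engineered precisely to overcome it: its hypothesis is post-FKG at $v$ and $w$ in $G$ with reference $a_G$, and its conclusion is post-FKG at the merged vertex in $G\Ground e$ with the \emph{same} reference $a_G$, thereby replacing the local minimiser by $a_G$ after shortening. To make this operative, I strengthen the inductive claim to ``post-FKG holds at every vertex of $G$ with reference $a_G$'', and propagate this property outward from $A$ --- where it is trivial since $\PP(a\lr A)=1$ and $\PP(a\lr b)\ge\PP(a_G\lr b)$ for every $a\in A$ --- via iterated applications of Conjecture \ref{conj:gluing} to auxiliary edges, interleaved with the induction hypothesis on intermediate subgraphs where the local and global minimisers align. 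A symmetric bookkeeping argument yields $F(0)\ge 0$, and the concavity of $F$ then closes the induction. The hard part is precisely this propagation scheme, since the set of vertices satisfying the $a_G$-referenced inequality must be grown carefully so that at each step some edge has both endpoints already in it.
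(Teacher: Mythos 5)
Your skeleton (induction on the number of edges, interpolate the edge probability, use concavity of the defect function, handle the two endpoints $t=0,1$ separately) is the same as the paper's, but your choice of reference point derails it. By fixing the reference as the minimiser $a_G$ of the \emph{original} graph $G$ and centring $F$ at an arbitrary vertex $0$, you create a mismatch at \emph{both} endpoints: the induction hypothesis for $G\setminus e$ and $G\Ground e$ only gives the inequality with respect to their own minimisers, and since $\PP_{G\setminus e}(a_G\lr b)\ge\min_{a}\PP_{G\setminus e}(a\lr b)$ the inequality you need goes the wrong way. Your proposed fix --- strengthening the inductive claim and ``propagating'' the $a_G$-referenced inequality outward from $A$ by iterated applications of Conjecture \ref{conj:gluing} to auxiliary edges --- is exactly the step you admit is hard, and it is not carried out; worse, it is unclear it can be: the conclusion \eqref{eq:glued} only concerns the glued vertex in the glued graph $G\Ground\{v,w\}$, so applying the conjecture to auxiliary edges moves you to different graphs rather than establishing the $a_G$-referenced inequality at further vertices of $G\setminus e$ or $G\Ground e$. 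As it stands this is a genuine gap, not a bookkeeping detail.

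The mismatch dissolves if you make two different choices, which is what the paper does. First, prove the inequality at a vertex $0$ by choosing the interpolated edge $e=\{v,w\}$ \emph{incident} to it (isolated $0$ being trivial), so the defect function is centred at $v=0$. Second, take the reference $a$ to be the minimiser of $\PP_{H_0}(\cdot\lr b)$ in $H_0=G\setminus e$, not in $G$. Then $f(0)\ge0$ is literally the induction hypothesis for $H_0$ at $v$; the hypothesis \eqref{eq:works both sides} of Conjecture \ref{conj:gluing}, applied to $H_0$ with this $a$, is again the induction hypothesis at $v$ and $w$; the conjecture's conclusion is precisely $f(1)\ge0$; concavity (your computation of the $t^2$ coefficient is fine) gives $f(p_e)\ge0$; and finally $\PP_G(a\lr b)\ge\min_{a'\in A}\PP_G(a'\lr b)$ converts this into \eqref{eq:postFKG} for $G$ at $0$. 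No propagation scheme is needed.
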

\begin{proof}[Proof sketch]We argue by induction on the number of edges in the graph. Assume conjecture \ref{conj:postFKG} has been shown for all graphs with less than $n$ edges. Let $G$ be some graph with $n$ edges, and let $e$ be an edge of $G$. Let $H_p$ be the auxiliary graph which one gets from $G$ by changing the probability of the edge $\{v,w\}$ to be open to be $p$ (so $H_1 =G\Ground \{v,w\}$). Let $a\in A$ be the one minimising $\PP_{H_0}(a\lr b)$ among the points of $A$ and define
  \[
  f(p)\coloneqq \PP_{H_p}(v\lr b)-\PP_{H_p}(v\lr A)\PP_{H_p}(a\lr b).
  \]
In words, the function $f$ measures the effect of the single edge $\{v,w\}$ on the probabilities of interest.

By our induction assumption, conjecture \ref{conj:postFKG} holds for $H_0$, i.e.\ $f(0)\ge 0$. Further, our induction assumptions shows \eqref{eq:works both sides}, so by conjecture \ref{conj:gluing} (for $H_0$), $f(1)\ge 0$. Finally, since $f(p)$ is concave in $p$, if it is nonnegative at 0 and at 1, it must be nonnegative for all $p$. This shows conjecture \ref{conj:postFKG} for $G$ and terminates the induction.
\end{proof}

\subsection{An application of the theory of boolean functions}

In this section we show a result that, says, roughly, that if graph does not satisfy conjecture \ref{conj:postFKG} but a lot of similar graphs do satisfy it, then it can only break the conjecture by a small amount. This result is similar to the well-known fact that for a monotone boolean function on $n$ bits the total influence is bounded by $\sqrt{n}$.

\begin{thm}
Fix $0,b\in G$ and $A\subset G$ and let $E$ be the
set of edges for which
\[
\mathbb{P}_{G\Ground e}(0\leftrightarrow b)\ge\mathbb{P}_{G\Ground e}(0\leftrightarrow A)\min_{a\in A}\mathbb{P}_{G\Ground e}(a\leftrightarrow b).
\]
Then
\[
\mathbb{P}_{G}(0\leftrightarrow b)\ge\mathbb{P}_{G}(0\leftrightarrow A)\min_{a\in A}\mathbb{P}_{G}(a\leftrightarrow b)-\bigg(\sum_{e\in E}\frac{p_{e}}{1-p_{e}}\bigg)^{-1/2}.
\]
\end{thm}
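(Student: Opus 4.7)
The plan is to prove a per-edge bound $g(G)\le(1-p_{e})L_{e}$ for every $e\in E$, where $g(G)\coloneqq \PP_G(0\lr A)\min_{a\in A}\PP_G(a\lr b)-\PP_G(0\lr b)$ is the deficit in the post-FKG inequality (we may assume $g(G)>0$, else the bound is trivial) and $L_e\coloneqq\PP(e\text{ is pivotal for }0\lr b)$. Aggregation by Cauchy--Schwarz together with a Bessel-type estimate on $\sum_e p_e(1-p_e)L_e^{2}$ will then yield the theorem.

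For the per-edge bound, set $Y=\PP_p(0\lr A)$, $Z_a=\PP_p(a\lr b)$, $X=\PP_p(0\lr b)$, and $g_a:= YZ_a-X$. Since $Y\ge 0$, $g=\min_{a\in A} g_a$. Fix $e\in E$ and let $a^{\dagger}\in A$ attain $\min_a \PP_{G\Ground e}(a\lr b)$. Viewed as a function of $p_e$ alone (other variables frozen), $g_{a^{\dagger}}$ is a polynomial of degree at most two whose leading coefficient is $\PP(e\text{ piv.\ for }0\lr A)\cdot\PP(e\text{ piv.\ for }a^{\dagger}\lr b)\ge 0$; hence $g_{a^{\dagger}}$ is convex in $p_e$. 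By the choice of $a^{\dagger}$ and the definition of $E$, $g_{a^{\dagger}}|_{p_e=1}=g(G\Ground e)\le 0$. The tangent inequality for convex functions together with Russo's formula then gives
\[
g(G)\le g_{a^{\dagger}}(p)\le -(1-p_e)\,\partial_{p_e}g_{a^{\dagger}}=(1-p_e)\bigl(L_e-J_eZ_{a^{\dagger}}-YK_{e,a^{\dagger}}\bigr)\le(1-p_e)L_e,
\]
where $J_e$ and $K_{e,a}$ denote pivotal probabilities for $0\lr A$ and $a\lr b$ respectively.

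For the aggregation step, square the per-edge bound and multiply by $p_e/(1-p_e)$ to get $g(G)^2\cdot p_e/(1-p_e)\le p_e(1-p_e)L_e^{2}$. Summing over $e\in E$, and enlarging the right-hand side to a sum over all edges, yields
\[
g(G)^{2}\sum_{e\in E}\frac{p_e}{1-p_e}\le \sum_{e\in E(G)} p_e(1-p_e)L_e^{2}.
\]
Expanding $f\coloneqq\mathbbm{1}\{0\lr b\}$ in the orthonormal Fourier basis $\chi_S=\prod_{e\in S}(\omega_e-p_e)/\sqrt{p_e(1-p_e)}$ of the biased Bernoulli product measure, a direct computation gives $\hat f(\{e\})^{2}=p_e(1-p_e)L_e^{2}$ (using monotonicity of $f$, which forces $\EE[f\mid\omega_e=1]-\EE[f\mid\omega_e=0]=L_e$), and hence Bessel's inequality supplies $\sum_e p_e(1-p_e)L_e^{2}\le \operatorname{Var}(f)=\PP(0\lr b)\PP(0\nlr b)\le 1$. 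Combining the two displays gives $g(G)^{2}\sum_{e\in E}p_e/(1-p_e)\le 1$, so $g(G)\le \bigl(\sum_{e\in E}p_e/(1-p_e)\bigr)^{-1/2}$ as desired (the slack in $\operatorname{Var}(f)\le \tfrac14$ actually yields the stronger factor $\tfrac12$).

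The delicate step is the convexity argument: $g$ itself, being a minimum of convex functions in $p_e$, is not convex in general, so convexity applied to $g$ directly would fail. The trick is to take $a^{\dagger}$ as the minimiser \emph{in the graph $G\Ground e$} rather than in $G$; this choice simultaneously (i) makes $g_{a^{\dagger}}|_{p_e=1}$ agree with $g|_{p_e=1}$ so that the hypothesis $e\in E$ can be applied, while (ii) keeping $g\le g_{a^{\dagger}}$ at the current $p$. That $a^{\dagger}$ depends on $e$ is harmless, since the final per-edge inequality $g(G)\le(1-p_e)L_e$ no longer mentions $a^{\dagger}$. The Bessel calculation is a standard piece of discrete Fourier analysis for product Bernoulli measure, but would be worth stating explicitly for the reader. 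Boundary cases ($p_e\in\{0,1\}$) are degenerate and dealt with directly.
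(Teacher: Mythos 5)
Your proposal is correct and follows essentially the same route as the paper: the per-edge bound $g(G)\le(1-p_e)L_e$ is exactly the paper's step (the paper chooses the same minimiser in $G\Ground e$ and gets it via the mean value theorem plus the fact that $\frac{d}{dp}\PP_{G_p}(0\lr b)$ is constant in $p_e$, which is equivalent to your convexity/tangent-line argument), and the aggregation is the same Bessel inequality, since your normalised Fourier coefficients $\hat f(\{e\})^2=p_e(1-p_e)L_e^2$ coincide with the paper's terms $\EE(\mathbbm{1}\{0\lr b\}D_e)^2/\EE(D_e^2)$. The only cosmetic differences are the tangent-line versus mean-value phrasing and your use of $\operatorname{Var}(f)\le\frac14$ in place of $\EE(f^2)\le1$, which as you note would even sharpen the constant.
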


\begin{proof}
Denote
\[
m\coloneqq\mathbb{P}_{G}(0\leftrightarrow A)\min_{a\in A}\mathbb{P}_{G}(a\leftrightarrow b)-\mathbb{P}_{G}(0\leftrightarrow b)
\]
and assume $m>0$ (otherwise there is nothing to prove). Fix an $e\in E$
and let $G_{p}$ be the graph $G$ with the probability of the edge
$e$ changed to $p$ (so in particular $G_{1}=G\Ground e$ and $G_{p_{e}}=G$).
Let $a\in A$ be the point where the minimum is achieved at $p=1$.
Let $f:[0,1]\to\mathbb{R}$ be defined by
\[
f(p)=\mathbb{P}_{G_{p}}(0\leftrightarrow b)-\mathbb{P}_{G_{p}}(0\leftrightarrow A)\mathbb{P}_{G_{p}}(a\leftrightarrow b).
\]
By definition $f(1)\ge0$ and $f(p_{e})=-m$. Since $f$ is continuously
differentiable there must be some $\xi\in[p_{e},1]$ such that
\[
f'(\xi)\geq \frac{m}{1-p_{e}}.
\]
The derivative of the second term in the definition of $f$ is negative,
so we get
\[
\left.\frac{d}{dp}\mathbb{P}_{G_{p}}(0\leftrightarrow b)\right|_{\xi}\ge\frac{m}{1-p_{e}}.
\]
Now the derivative on the left hand side no longer depends on $\xi$ and, further, can be written in the following
way. Define a function on $\{0,1\}^{E(G)}$ by
\[
D_{e}(\omega)\coloneqq\begin{cases}
\frac{1}{p_{e}} & \omega_{e}=1\\
\frac{-1}{1-p_{e}} & \omega_{e}=0.
\end{cases}
\]
Then one can check that
\[
\frac{d}{dp}\mathbb{P}_{G_{p}}(0\leftrightarrow b)=\mathbb{E}(\mathbbm{1}\{0\leftrightarrow b\}D_{e}).
\]
(the expectation on the right is simply on $G$). Now, the functions
$D_{e}$ for different $e$ are orthogonal. Hence
\begin{align*}
1 & \ge\mathbb{E}(\mathbbm{1}\{0\leftrightarrow b\}^{2})\ge\sum_{e\in E}\frac{\mathbb{E}(\mathbbm{1}\{0\leftrightarrow b\}D_{e})^{2}}{\mathbb{E}(D_{e}^{2})}\ge\sum_{e\in E}\frac{\left(\frac{m}{1-p_{e}}\right)^{2}}{\frac{1}{p_{e}(1-p_{e})}}\\
 & =m^{2}\sum_{e\in E}\frac{p_{e}}{1-p_{e}}
\end{align*}
as promised.
\end{proof}

\begin{rem*} It is easy to check that a similar result holds in the pre-FKG case (corresponding to conjecture \ref{conj:preFKG}).
\end{rem*}

\subsection{Candidates}

We assemble here a collection of attempts to quantify the question `for which $a$ does the conjecture hold except the minimal one?' Numerical evidence points towards an answer of `yes' for all of them, but we are rather hesitant.
\begin{quest}Let $G$ be a graph, $A\subset G$, $0,b\in G$. Let $a\in A$ satisfy
  \[
  \PP(a\lr b)\le \PP(a'\lr b)\qquad\forall a'\in A,
  \]
  i.e.\ $a$ is the minimiser among the points of $A$. Is it true that in this case
  \begin{equation}\label{eq:it is a}
  \PP(0\lr b,0\lr A)\ge \PP(0\lr A,a\lr b)?
  \end{equation}
  In other words, does the minimiser of the post-FKG version of the conjecture also satisfy the pre-FKG version (even though it is not necessarily the minimiser in this second case)?
\end{quest}
\begin{quest}
  Let $G$, $A$, $0$ and $b$ be as above. Let $a$ be the point minimising $\PP(a\lr b,0\nlr A)$ among the points of $A$. Is it true that in this case the pre-FKG conjecture \eqref{eq:it is a} holds?
\end{quest}
\begin{quest}Let $G$, $A$, $0$ and $b$ be as above. Let $H$ be the graph given from $G$ by removing all edges going out of $0$. Let $a$ be the point minimising $\PP_H(a\lr b)$ among the points of $A$. Is it true that in this case \eqref{eq:it is a} holds?
\end{quest}

\subsection{Simple reductions}
We note three simple reductions that can be performed on the problem.
\lazyenum
\item It is possible to assume that all edges have probability $\frac12$. Indeed, for any $p\in[0,1]$ and any $\eps>0$ one may find a graph $H_p$, all whose edges have probability $\frac12$, and two vertices $a,b\in H_p$ such that $\PP_{H_p}(a\lr b)\in (p-\eps,p+\eps)$. (The graph $H_p$ can be constructed simply by taking many parallel lines between $a$ and $b$). Thus for any finite graph $G$ with any probabilities one can simply replace each edge $e$ with weight $p$ with a copy of $H_p$ with the $a$ and $b$ of $H_p$ identified with the two vertices of $e$. Taking $\eps\to 0$ proves the claim. 

  This allows to reformulate the problem as a counting problem.
\item It is possible to assume all vertices of $G$ have degree 3. Indeed, any vertex of degree higher than 3 can be simulated by a little graph where all vertices have degree 3 and all internal edges have probability 1. Vertices with degree smaller than 3 may be decorated by small $
  \begin{tikzpicture}[baseline]
    \fill[black] (0.2,0.1) circle [radius=1pt];
    \fill[black] (0.3,0) circle [radius=1pt];
    \fill[black] (0.5,0) circle [radius=1pt];
    \fill[black] (0.5,0.2) circle [radius=1pt];
    \fill[black] (0.3,0.2) circle [radius=1pt];
    \draw (0,0.1) -- (0.2,0.1) -- (0.3,0) -- (0.5,0) -- (0.5,0.2) -- (0.3,0.2) -- (0.2,0.1);
    \draw (0.5,0) -- (0.3,0.2);
    \draw (0.3,0) -- (0.5,0.2);
  \end{tikzpicture}$, 1 for each missing edge. Such dangling ends do not change connection probabilities.
  This might be useful for an inductive approach to the problem.

  It would be interesting to combine these two reductions, but it seems that this would require a new idea.

\item It is possible to assume that $0\not\sim b$. Indeed, let $G$ be some arbitrary graph, and denote by $G_p$ the graph where the edge $\{0,b\}$ is set to have probability $p$. Our assumption is that the conjecture holds for $G_0$ (for some $a\in A$). At $G_1$, though, the conjecture holds trivially, for all $a\in A$. Using the fact that the value we wish to prove is positive is concave (in the post-FKG version) or linear (in the pre-FKG version) it must be positive throughout $[0,1]$.

\item It is possible to restrict to sets $A$ of the sort $\{a\in G: \PP(a\lr b)>p\}$ for some $p\in [0,1]$. 
\end{enumerate}

\subsection{Counterexamples}
\begin{figure}
  \centering
  \input{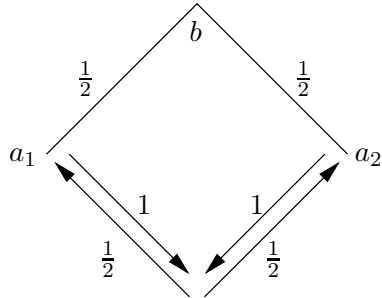}
  \caption{A directed percolation counterexample.}\label{cap:directed}
\end{figure}
The most obvious wrong generalisation of conjecture \ref{conj:postFKG} is that it does not work for directed percolation. Indeed, consider a graph with 4 vertices, 0, $a_1$, $a_2$ and $b$ with directed edges (with probability $\frac12$) from $0$ to both $a_i$, and from those to $b$. In addition we put directed edges from the $a_i$ back to $0$ with probability 1. See figure \ref{cap:directed}. We have $\PP(0\to b)=\frac7{16}$ while $\PP(0\to A)\PP(a\to b)=\frac34\cdot\frac58>\frac7{16}$. 

Conjecture \ref{conj:gluing} might tempt one to conjecture that in fact, for any $a$ for which $\PP_G(v\lr b)\ge \PP_G(v\lr A)\PP_G(a\lr b)$ one has \eqref{eq:glued} (with $G$ and $v$ as in conjecture \ref{conj:gluing}). If true, this would reduce conjecture \ref{conj:postFKG} to a question involving a bounded number of points. Rather than giving a counterexample (one exists already in a graph with 5 vertices and $|A|=3$), we give an indirect argument which is more generally applicable.

\begin{thm}
  There exists a graph $G$, a set $A\subset G$, and vertices $a\in A$, $b,v,w\in G$ such that
  \[
  \PP_G(v\lr b)\ge \PP_G(\textcolor{blue}{v}\lr A)\PP_G(a\lr b)
  \]
  but
  \[
  \PP_{G\Ground \{v,w\}}(v\lr b)<\PP_{G\Ground \{v,w\}}(v\lr A)\PP_{G\Ground \{v,w\}}(a\lr b).
  \]
\end{thm}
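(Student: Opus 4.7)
The plan is to argue by contradiction using a concavity observation on a single edge probability. If the theorem failed, then whenever the first displayed inequality holds in some $G$, the second holds too; I will show that this would force the post-FKG inequality $\PP_G(v\lr b)\ge\PP_G(v\lr A)\PP_G(a\lr b)$ to hold for \emph{every} $a\in A$ (not just the minimiser) in every graph $G$ with $v\notin A\cup\{b\}$. Since this stronger statement is easily contradicted by a small explicit example, the theorem follows.

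Fix $G,A,a,b,v$ with $v\notin A\cup\{b\}$, let $e=\{v,w\}$ be any edge adjacent to $v$, and let $G_t$ denote $G$ with the probability of $e$ replaced by $t$. Each of $\PP_{G_t}(v\lr b)$, $\PP_{G_t}(v\lr A)$ and $\PP_{G_t}(a\lr b)$ is an affine, nondecreasing function of $t$ (as $\PP$ of an increasing event is multilinear in edge probabilities), so their product is a convex quadratic and the gap
\[
f(t)\coloneqq\PP_{G_t}(v\lr b)-\PP_{G_t}(v\lr A)\PP_{G_t}(a\lr b)
\]
is concave on $[0,1]$. Under the hypothetical negation of the theorem, applied at $t=0$ (where $G_0=G\setminus e$ while $G_0\Ground e=G_1$), we have $f(0)\ge 0\Rightarrow f(1)\ge 0$, and concavity propagates $f\ge 0$ across all of $[0,1]$. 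Iterating this one edge at a time over the edges incident to $v$, we conclude that if the post-FKG inequality at $v$ for $a$ holds in the graph $G_\circ$ obtained by isolating $v$, it holds in $G$. But in $G_\circ$, since $v\notin A\cup\{b\}$, both $\PP(v\lr b)$ and $\PP(v\lr A)$ vanish, so the inequality is trivial.

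To contradict this, take four vertices $v,a_1,a_2,b$, set $A=\{a_1,a_2\}$, and give probability $\tfrac12$ to $\{v,a_1\}$ and $\{v,a_2\}$, a very small probability to $\{a_1,b\}$, and probability $1$ to $\{a_2,b\}$. The non-minimiser $a=a_2$ has $\PP(a_2\lr b)=1$, so the post-FKG inequality reduces to $\PP(v\lr b)\ge\PP(v\lr A)$; but $\PP(v\lr A)=\tfrac34$ while $\PP(v\lr b)$ is approximately $\tfrac12$, so the inequality is violated. This contradicts the conclusion drawn from the assumed negation of the theorem, proving the theorem.

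The only delicate point is that the strengthening being refuted is formulated only for edges $\{v,w\}$ with $v$ fixed as the distinguished vertex of the inequality, so the iteration can only toggle edges incident to $v$; fortunately isolating $v$ already suffices to trivialise the base case, so this restriction is harmless. I expect the main work in turning this sketch into a clean proof to be in verifying the multilinearity (hence concavity) of $f$ carefully, and in spelling out the one-edge-at-a-time induction; there are no conceptual difficulties.
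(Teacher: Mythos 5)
Your proposal is correct and is essentially the paper's own argument: assume the negation, start from a graph in which $v$ is cut off from $A$ (so the inequality holds vacuously for every $a\in A$), restore the edges at $v$ one at a time using the gluing assumption at probability $1$ together with concavity of $f(t)=\PP_{G_t}(v\lr b)-\PP_{G_t}(v\lr A)\PP_{G_t}(a\lr b)$ in the single edge probability, and then contradict the resulting universal inequality. You even fill in two details the paper leaves implicit — that only edges incident to $v$ need to be toggled, and an explicit four-vertex example (with the non-minimising $a$) showing the universal inequality is false where the paper merely says ``this is clearly false.''
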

\begin{proof}Assume by contradiction that the opposite holds for all $G$, $A$, $a,b,v,w$. Let $G$ be a graph where $v$ is disconnected from $A$. In this graph
\begin{equation}\label{eq:nubeemet}
  \PP_G(v\lr b)\ge \PP_G(v\lr A)\PP_G(a\lr b)
\end{equation}holds for all $a\in A$ because the right hand side is 0. Applying the contradictory assumption, the same argument as in lemma \ref{lem:gluing} shows that in any graph one can get from $G$ by successively adding edges we will also have \eqref{eq:nubeemet} for all $a\in A$. Since any graph can be had from a graph where $v$ and $A$ are disconnected by adding edges, we get that in any graph $G$,  any $A$ and any $a\in A$ we have \eqref{eq:nubeemet}. This is clearly false.
\end{proof}

\end{document}